\documentclass[11pt,a4paper]{article}

\usepackage[english]{babel}
\usepackage{amsmath,amsthm,amssymb,graphicx,xcolor,bm} 
 \allowdisplaybreaks
  
\textwidth=15cm
\hoffset=-0.7cm

\newtheorem{theorem}{Theorem}
\newtheorem{lemma}{Lemma}
\newtheorem{corollary}{Corollary}
\newtheorem{proposition}{Proposition}
\theoremstyle{definition}
\newtheorem{remark}{Remark}

\begin{document}


\title{On coverage and local radial rates of DDM-credible sets}
\author{Eduard Belitser \\ {\it VU University Amsterdam}}
\date{}
\maketitle

\renewcommand{\abstractname}{\vspace{-\baselineskip}}
\begin{abstract}
For a general statistical model, we introduce the notion of 
\emph{data dependent measure} (DDM)  on the model parameter.
Typical examples of DDM are the posterior distributions. 
Like for posteriors, the quality of a DDM is characterized by the
contraction rate which we allow to be local, i.e., depending on the parameter.  
We construct confidence sets as \emph{DDM-credible sets} and 
address the issue of optimality of such sets, via a trade-off between 
its ``size'' (the \emph{local radial rate}) and its coverage probability.  
In the mildly ill-posed inverse signal-in-white-noise model, we construct 
a DDM as empirical Bayes posterior with respect to a certain prior, and 
define its (default) credible set.  Then we introduce \emph{excessive bias restriction} 
(EBR), more general than \emph{self-similarity} and \emph{polished tail condition} 
recently studied in the literature. Under EBR, we establish 
the confidence optimality of our credible set with some local 
(\emph{oracle}) radial rate.  
We also derive the oracle estimation inequality and the oracle DDM-contraction 
rate, non-asymptotically and uniformly in $\ell_2$. 
The obtained local results are more  powerful than global:  adaptive 
minimax results 
for a number of smoothness scales follow as consequence, in particular, 
the ones considered by Szab\' o et al.\ (2015) \cite{Szabo&etal:2015}. 
\end{abstract}


\vspace{-20pt} 
\footnote{\vspace{-\baselineskip}\\
\textit{MSC2010 subject classification:}
primary 62G15, 62C05; secondary 62G99. \\
\textit{Keywords and phrases:} 
DDM-credible ball, excessive bias restriction, local radial rate.}

\section{Introduction}

Suppose we observe  a random element  $X^{(\varepsilon)}   \sim 
\mathrm{P}_0^{(\varepsilon)} \in \mathcal{P}^{(\varepsilon)}$,
$X^{(\varepsilon)}\in\mathcal{X}^{(\varepsilon)}$ 
for some measurable space
$(\mathcal{X}^{(\varepsilon)},\mathcal{A}^{(\varepsilon)})$, where
$\mathcal{A}^{(\varepsilon)}$ is a $\sigma$-algebra
on $\mathcal{X}^{(\varepsilon)}$. 
In fact, we consider a sequence of observation models parametrized by $\varepsilon>0$.
Parameter $\varepsilon$ is assumed to be known, it reflects in some sense 
the influx of information in the data $X^{(\varepsilon)}$ as
$\varepsilon \to 0$.
For instance, $\varepsilon$ can be the variance of an additive noise, or
$\varepsilon=n^{-1/2}$, where $n$ is the sample size.
To avoid overloaded notations, we will often drop the dependence on
$\varepsilon$;  for example, $X=X^{(\varepsilon)}$ etc.  

Let $\mathcal {P}=\big\{\mathrm{P}_{\theta}, \, \theta \in\Theta\big\}$ 
and  $\mathrm{P}_0 = \mathrm{P}_{\theta_0}$, where
$\theta_0 \in\Theta \subseteq \mathcal{L}$ is an unknown parameter of
interest belonging to some subset $\Theta$ of a linear 
space $\mathcal{L}$ equipped with a (semi-)metric 
$d(\cdot,\cdot): \, \mathcal{L} \times \mathcal{L} \to \mathbb{R}_+=[0,+\infty)$. 
From now on, when we deal with probabilities of events in terms of the data 
$X\sim \mathrm{P}_\theta$, we write $\mathrm{P}_\theta$.
By $\theta_0$ we denote the so called ``true'' value of the 
parameter $\theta$ to distinguish it from the variable $\theta\in \Theta$. 

The aim is to construct an optimal (to be defined later) confidence set for the parameter 
$\theta_0\in \Theta$ on the basis of observation $X \sim \mathrm{P}_0 \in \mathcal{P}$,
with a prescribed coverage probability.
The convention throughout this paper is that we measure the size of a set 
by the smallest possible radius of a ball containing that set.
It is thus sufficient to consider only confidence balls as confidence sets.
Let $B(l_0,r) =\{l\in \mathcal{L}: \, d(l_0,l) \le r\}$ be the ball in space $\mathcal{L}$ 
with center $l_0\in \mathcal{L}$ and radius $r\ge 0$.
Denote by $\mathcal{B}_\mathcal{L}$ the corresponding Borel
$\sigma$-algebra on $\mathcal{L}$ and by $\mathcal{B}_\mathbb{R}$ the usual Borel
$\sigma$-algebra on $\mathbb{R}$.
A general confidence ball for the parameter $\theta$ is of the form
$B(\tilde{\theta},  \tilde{r})= \{\theta \in \mathcal{L}: \, d(\theta,
\tilde{\theta}) \le \tilde{r}\}$, 
with some \emph{data dependent center} (DD-center)
$\tilde{\theta}=\tilde{\theta}(X)=\tilde{\theta}(X,\varepsilon)$, 
$\tilde{\theta}: \, \mathcal{X} \to \Theta$, 
and some \emph{data dependent radius} (DD-radius)
$\tilde{r}=\tilde{r}(X)=\tilde{r}(X,\varepsilon)$, 
$\tilde{r}: \, \mathcal{X} \to \mathbb{R}_+ = \{a\in \mathbb{R}: \,a\ge 0\}$. 
The quantities $\tilde{\theta}$ and $\tilde{r}$ are 
$(\mathcal{A},\mathcal{B}_\mathbb{R})$-measurable functions of the data.

Suppose we are given a \emph{data dependent measure} (DDM)
$\mathrm{P}(\cdot|X)$ on $\Theta$ (we will say: a DDM on the parameter $\theta$). 
In order to settle the measurability issue for the rest of the paper, 
by DDM we will always mean a measurable probability measure in the sense that 
for  all $x \in \mathcal{X}$ the quantity $\mathrm{P}(\cdot|X=x)$ is a probability measure on 
$(\mathcal{B}_{\mathcal{L}},\Theta)$ (can be relaxed to $\mathrm{P}_{\theta_0}$-almost all 
$x \in \mathcal{X}$, for all $\theta_0\in\Theta$) and 
$\mathrm{P}(B|X)$ is  $\mathcal{A}$-measurable for each $B \in \mathcal{B}_{\mathcal{L}}$. 
Typically, a DDM is obtained  by using a Bayesian approach, 
as the resulting posterior (or empirical Bayes posterior) distribution  
with respect to some prior on $\Theta$, see Supplement for more details 
on how Bayes approach yields DDM's. We slightly abuse the traditional 
notation $\mathrm{P}(\cdot|X)$ because in general a DDM does not have 
to be a conditional distribution. Notice that empirical Bayes posteriors are, 
strictly speaking, not conditional distributions either.
Other examples considered in the literature that fall under the category 
of DDMs are (generalized) fiducial distributions and bootstrap.   

A ``good'' DDM can be used for all kind of inference: e.g., 
estimation, construction of confidence sets (nowadays termed as \emph{uncertainty quantification}), testing.
As to confidence sets, given a DDM $\mathrm{P}(\cdot|X)$ on $\Theta$,
we can take a \emph{DDM-credible set} 
$C_\alpha(X)$ of level $\alpha \in [0,1]$, i.e., $\mathrm{P}(\theta\in C_\alpha(X)|X) \ge \alpha$, 
as a candidate confidence set. 
In this paper we focus on the following, for now loosely formulated, question:
\begin{center}\emph{When does DDM-credibility lead to confidence?}\end{center}

Let us specify the optimality framework for confidence sets. 
We would like to construct  such a confidence ball $B(\hat{\theta}, C\hat{r})$
that for any $\alpha_1,\alpha_2\in (0,1]$ and some functional 
$\mathnormal{r}(\theta)=r_\varepsilon(\theta)$, 
$\mathnormal{r}_\varepsilon:\, \Theta\to \mathbb{R}_+$,
there exists  $C, c>0$ such that 
for all $\varepsilon \in (0,\varepsilon_0]$ with some $\varepsilon_0>0$,
\begin{align}
\label{conf_ball_problem}
\sup_{\theta\in\Theta_{cov}} \mathrm{P}_\theta \big(\theta\not\in B(\hat{\theta},C\hat{r}) \big) 
\le \alpha_1, \quad 
\sup_{\theta\in\Theta_{size}} \mathrm{P}_\theta 
\big( \hat{r} \ge c \mathnormal{r}_\varepsilon(\theta) \big) 
\le \alpha_2,
\end{align}
where $\Theta_{cov}, \Theta_{size} \subseteq \Theta$.
In some papers, a confidence set satisfying the first 
relation in (\ref{conf_ball_problem}) is called
\emph{honest} over $\Theta_{cov}$. 
The quantity $\mathnormal{r}_\varepsilon(\theta)$
has the meaning of the effective radius of the confidence 
ball $B(\hat{\theta},C\hat{r})$.  
We call the quantity $\mathnormal{r}_\varepsilon(\theta)$ 
\emph{radial rate}. 
Clearly, there are many possible radial rates, but it is desirable to find 
the ``fastest'' (i.e., smallest) radial rate  $\mathnormal{r}_\varepsilon(\theta)$, 
for which  the relations (\ref{conf_ball_problem}) hold for ``massive''  
 $\Theta_{cov},\Theta_{size} \subseteq \Theta$, ideally for
$\Theta_{cov} =\Theta_{size}= \Theta$. 
The two relations in (\ref{conf_ball_problem}) are called \emph{coverage} and \emph{size properties}.
Asymptotic formulation is also possible: $\limsup_{\varepsilon \to 0}$ 
should be taken, constants $\alpha_1$, $\alpha_2$, $C$, $c$ (possibly sets $\Theta_{cov},
\Theta_{size}$) can be allowed to  depend on $\varepsilon$. 

Thus the following optimality aspects are involved in the framework 
(\ref{conf_ball_problem}): the coverage, the radial rate, 
and the uniformity subsets $\Theta_{cov}, \Theta_{size}$. The optimality is basically 
a trade-off between these complementary aspects pushed to the utmost limits,
when further improving upon one aspect leads to a deterioration in another aspect.
For example, the smaller the local radial rate $\mathnormal{r}_\varepsilon(\theta)$ 
in (\ref{conf_ball_problem}), the better. But if  it is too small, the size requirement 
in (\ref{conf_ball_problem}) may hold uniformly only over some 
``thin''  set $\Theta_{size}\subset\Theta$. 
On the other hand, if one insists on $\Theta_{cov}=\Theta_{size}=\Theta$,
then it may be impossible to establish (\ref{conf_ball_problem}) for interesting 
(relatively small) radial rates $\mathnormal{r}_\varepsilon(\theta)$.

One approach to optimality is via minimax estimation framework.
It is assumed that 
$\theta \in \Theta_\beta\subseteq\Theta$ for some ``smoothness'' parameter 
$\beta \in \mathcal{B}$, which  may be known or unknown (non-adaptive 
or adaptive formulation)�. The key notion here is the so called 
\emph{minimax rate} $R_\varepsilon(\Theta_\beta)$, see Supplement. 
The radial rate is taken to be
$\mathnormal{r}_\varepsilon(\theta)=R_\varepsilon(\Theta_\beta)$, 
which is a global quantity as it is constant for all $\theta\in\Theta_\beta$.  
In the nonadaptive case, it can be shown that the minimax rate $R_\varepsilon(\Theta_\beta)$ 
is the \emph{best global radial rate} (i.e., among all radial rates that are constant 
on $\Theta_\beta$); see Supplement for more details. 

An adaptation problem arises when, for a given family of models 
$\{\Theta_\beta, \, \beta\in\mathcal{B}\}$ (called \emph{scale}), 
we only know that $\theta\in  \Theta_\beta$ for some unknown 
$\beta\in\mathcal{B}$. In fact, $\theta\in\cup_{\beta\in\mathcal{B}} \Theta_\beta \subseteq\Theta$ 
and the problem becomes in general more difficult. 
For a $\Theta'_{cov}\subseteq \Theta$, we want to construct such 
a confidence ball $B(\hat{\theta},C\hat{r})$ that 
\begin{equation}
\label{adapt_conf_ball_problem}
\sup_{\theta\in\Theta'_{cov}} \mathrm{P}_\theta 
\big(\theta\not\in B(\hat{\theta},C\hat{r}) \big) 
\le \alpha_1, \;\;
\sup_{\theta\in\Theta_\beta} \mathrm{P}_\theta \big( \hat{r} 
\ge c R_\varepsilon(\Theta_\beta) \big) 
\le \alpha_2 \;\, \forall\beta\in\mathcal{B},
\end{equation}
possibly in asymptotic setting: put $\limsup_{\varepsilon \to 0}$ 
in front of both $\sup$ in (\ref{adapt_conf_ball_problem}).
Ideally, $\mathcal{B}$ is ``massive'' and $\Theta'_{cov} \supseteq\Theta_\beta$.
However, in general it is impossible to construct  optimal (fully) adaptive confidence set 
in the minimax sense: the coverage requirement in (\ref{adapt_conf_ball_problem}) does not hold 
even for $\Theta'_{cov}= \Theta_\beta$. For the classical many normal 
means model, there are negative results in \cite{Li:1989}, 
\cite{Baraud:2004}, \cite{Cai&Low:2006}; this is also discussed in \cite{Robins&vanderVaart:2006}.
A way to achieve adaptivity is  to remove the so called \emph{deceptive parameters}
(in \cite{Szabo&etal:2015} they are called \emph{inconvenient truths}) from $\Theta$, i.e.,
consider a strictly smaller set  $\Theta'_{cov}\subset \Theta$.
Examples are: $\Theta'_{cov}=\Theta_{ss}$, the so called  
\emph{self-similar} parameters (related to Sobolev/Besov scales)
introduced in \cite{Picard&Tribouley:2000} and later studied in \cite{Bull:2012}, 
\cite{Bull&Nickl:2013},  \cite{Szabo&etal:2015},  
\cite{Nickl&Szabo:2014}, \cite{Serra&Krivobokova:2014};
and $\Theta'_{cov}=\Theta_{pt}$, a more general class 
of \emph{polished tail} parameters introduced in
\cite{Szabo&etal:2015}.  
More literature on adaptive minimax confidence sets: \cite{Low:1997}, 
\cite{Beran&Dumbgen:1998}, \cite{Picard&Tribouley:2000},  
\cite{Juditsky&Lambert-Lacroix:2003}, \cite{Genovese&Wasserman:2008},  
\cite{Gine&Nickl:2010}, \cite{Hoffmann&Nickl:2011}, 
\cite{Bull:2012}, \cite{Bull&Nickl:2013}, \cite{Nickl&Szabo:2014},
\cite{Szabo&etal:2015, Szabo&etal:2015b}.

In all the above mentioned papers  global minimax radial rates 
$R_\varepsilon(\Theta_\beta)$ (as in (\ref{adapt_conf_ball_problem})) were studied. 
In this paper we allow local radial rates as in the framework (\ref{conf_ball_problem}). 
When applied appropriately, the local approach is actually more powerful and flexible. 
Namely, suppose that a local radial rate $\mathnormal{r}_\varepsilon(\theta)$ 
is such that, for some uniform $c>0$,
\begin{align}
\label{local_impl_global}
\mathnormal{r}_\varepsilon(\theta) 
\le c R_\varepsilon(\Theta_\beta), \quad \text{for all} \;\; \theta \in \Theta_\beta, \;
\beta\in\mathcal{B}.
\end{align}
If in addition $\Theta'_{cov}\subseteq \Theta_{cov}$ and 
$\Theta_\beta \subseteq \Theta_{size}$ for all $\beta\in\mathcal{B}$, 
then the results of type (\ref{conf_ball_problem}) imply the results 
of type (\ref{adapt_conf_ball_problem}), 
\emph{simultaneously for all scales} $\{\Theta_\beta, \, \beta\in\mathcal{B}\}$
for which  (\ref{local_impl_global}) is satisfied. 
We say that the local radial rate $\mathnormal{r}_\varepsilon(\theta)$ \emph{covers} these scales;
more details are in Supplement.

%

In Section \ref{sec_general_appr} we consider a general  setting and 
present two types of conditions on a DDM $\mathrm{P}(\cdot|X)$:  
the upper and lower bounds on the DDM-contraction rate
in terms of a given local radial rate $\mathnormal{r}_\varepsilon(\theta_0)$.
Roughly speaking, the upper bound condition means that the DDM $\mathrm{P}(\cdot|X)$ contracts
at $\theta_0$ with the local rate at least $\mathnormal{r}_\varepsilon(\theta_0)$,
from the  $\mathrm{P}_{\theta_0}$-perspective;
then one can also construct a DD-center $\tilde{\theta}$ which is 
an estimator of $\theta_0$ with the rate $\mathnormal{r}_\varepsilon(\theta_0)$.  
The lower bound condition means that the DDM concentrates 
around the DD-center $\tilde{\theta}$ at a rate that is not faster than 
$\mathnormal{r}_\varepsilon(\theta_0)$.
We show that the upper bound condition allows to control 
the size of the $\mathrm{P}(\cdot|X)$-credible ball, whereas
the lower bound  is in some sense the minimal condition for providing its 
sufficient $\mathrm{P}_{\theta_0}$-coverage.


In Section \ref{section_gsf} we consider the 
mildly ill-posed inverse signal-in-white-noise model 
and implement the general approach of Section \ref{sec_general_appr}.
We construct a DDM $\mathrm{P}(\cdot|X)$, 
which is 
in fact the empirical Bayes posterior resulting from a certain two-level hierarchical prior.  
For the proposed DDM, we first prove the upper bound type result. 
Namely, we establish that the DDM  $\mathrm{P}(\cdot|X)$  contracts, 
from the $\mathrm{P}_{\theta_0}$-perspective,  to $\theta_0$  with 
the local rate $\mathnormal{r}_\varepsilon(\theta_0)$, 
which is the best (fastest) contraction rate  over some family of DDMs
(therefore also called \emph{oracle rate}).
The DDM contraction result is non-asymptotic and uniform in $\theta_0\in \ell_2$.
The local radial rate $\mathnormal{r}_\varepsilon(\theta_0)$  
satisfies (\ref{local_impl_global}) for typical smoothness scales 
such as  Sobolev and analytic ellipsoids, Sobolev hyperrectangles, 
tail classes, certain scales of Besov classes and 
$\ell_p$-bodies. 
This means that we obtained, as consequence of our local result, the
adaptive minimax contraction rate results over all these scales for the DDM $\mathrm{P}(\cdot|X)$.  
An accompanying result is that, by using the DDM $\mathrm{P}(\cdot|X)$, 
a DD-center $\tilde{\theta}$ can be constructed that converges to $\theta_0$ 
also with the local rate $\mathnormal{r}_\varepsilon(\theta_0)$, thus also yielding 
the panorama of the minimax adaptive estimation results over all these scales simultaneously. 


Although the upper bound results are of interest on its own, our main purpose is 
to construct an optimal  (according to the framework (\ref{conf_ball_problem})) confidence set. 
To this end, the established upper bound results imply the 
size relation for a $\mathrm{P}(\cdot|X)$-credible ball in (\ref{conf_ball_problem}) with the 
local radial rate $\mathnormal{r}_\varepsilon(\theta_0)$, uniformly over $\Theta_{size}=\ell_2$. 
For the coverage relation in (\ref{conf_ball_problem}) to hold, we also need the lower bound results.
It turns out that the lower bound result can be established uniformly only over some 
$\Theta_{cov} \subset\ell_2$, which forms an actual  restriction.  
This is in accordance with the above mentioned fact that it is impossible 
to construct optimal (fully) adaptive confidence set in the minimax sense.
We propose a set  $\Theta_{cov}=\Theta_{eb}$ of (non-deceptive) 
parameters satisfying the so called \emph{excessive bias restriction} 
and derive the lower bound uniformly over this set. 
Combining the obtained upper and lower bounds, we establish the optimality 
(\ref{conf_ball_problem}) of  a (default) DDM-credible 
ball with $\Theta_{cov}=\Theta_{eb}$, $\Theta_{size}=\ell_2$ 
and the local radial rate $\mathnormal{r}_\varepsilon(\theta_0)$. 
The class $\Theta_{eb}$ is more general than the earlier mentioned  self-similar  and 
polished tail parameters, 
namely, $\Theta_{ss}\subseteq \Theta_{pt}\subseteq\Theta_{eb}$.
Moreover, the established (local) optimality (\ref{conf_ball_problem}) 
implies the global optimality (\ref{adapt_conf_ball_problem}) in the sense of adaptive 
minimaxity over all scales for which (\ref{local_impl_global}) is fulfilled, in particular 
for the ones considered by Szab\' o et al.\ (2015).
In this paper, we primarily  interested in non-asymptotic assertions, 
asymptotic versions can be readily obtained. 
Section \ref{sec_proofs} contains the proofs of the main results. 
The elaboration on some points  
and some background  information related to the paper 
are provided in Supplement.

\section{General DDM-based construction of confidence ball}
\label{sec_general_appr}

\subsection{DDM-credible ball}

Suppose we are given a DDM $\mathrm{P}(\cdot|X)$ on $\theta$.
The goal of this section is to construct a confidence set by using this DDM
and to elaborate on its coverage and size.
Recall that our optimality framework is (\ref{conf_ball_problem}), with 
a local radial rate $\mathnormal{r}(\theta_0)=\mathnormal{r}_\varepsilon(\theta_0)$. 
In this section we are not concerned with specific choices for radial rates 
and simply  suppose that we are given some local radial rate 
$\mathnormal{r}(\theta_0)$. 
As $X=X^{(\varepsilon)}$, 
the DDM $\mathrm{P}(\cdot|X)$ depends on $\varepsilon$. 
Hence, so do all the DDM-based quantities. In this section we omit this dependence 
completely to ease the notations.
The convention for the rest of this section is that all assumptions and claims hold 
for all $\varepsilon \in [0,\varepsilon_0]$ with some $\varepsilon_0>0$.

First we present the general construction of a confidence ball by using 
the DDM $\mathrm{P}(\theta|X)$ and a DD-center $\hat{\theta}= \hat{\theta}(X)$. 
For a $\kappa \in(0,1)$, define the DD-radius  
\begin{equation}
\label{radius}
\hat{r}_\kappa= \hat{r}(\kappa,X,\hat{\theta}) = 
\inf\big\{r:\, \mathrm{P}\big(d(\theta,\hat{\theta}) \le r |X\big) \ge 1-\kappa
\big\}
\end{equation}
and then, for an $M>0$, construct the confidence ball 
\begin{equation}
\label{conf_ball}
B(\hat{\theta},  M\hat{r}_\kappa)= \big\{\theta \in \Theta: \,
d(\theta,
\hat{\theta}) \le M \hat{r}_\kappa\big\}.
\end{equation}
For $M=1$, (\ref{conf_ball}) is the smallest DDM-credible ball around $\hat{\theta}$ of  
level $1-\kappa$. For a good DDM that concentrates around $\theta_0$ from 
the $\mathrm{P}_{\theta_0}$-perspective (i.e., under 
$X \sim \mathrm{P}_{\theta_0}$), a DDM-credible set should 
also be a good confidence set, but its $\mathrm{P}_{\theta_0}$-coverage
is in general lower than $1-\kappa$ because of uncertainty in the data.
The multiplicative factor $M$, not dependent on $\varepsilon$,
is intended to inflate the DDM-credible ball of level $1-\kappa$ to account for this uncertainty.

Now we construct a confidence ball by using only the given DDM $\mathrm{P}(\cdot|X)$, 
without a predetermined DD-center. For a $p\in (1/2,1)$, define first
\begin{equation}
\label{radius*}
\hat{r}^*=\hat{r}^*(p)=\inf\big\{r:\, \mathrm{P}(d(\theta,\theta')
\le r |  X)\ge p \; \mbox{ for some } \theta' \in \Theta \big\}.
\end{equation}
This is the smallest possible radius of DDM-credible ball of level $p$.
Next, for some $\varsigma >0$, take any (measurable function of data $X$)
$\check{\theta}\in \Theta$ that satisfies
\begin{equation}
\label{hat_theta}
\mathrm{P}(\theta: \, d(\theta, \check{\theta})\le  (1+\varsigma) \hat{r}^* |X)\ge p.
\end{equation}
We call the constructed $\check{\theta}=\check{\theta}(p,\varsigma)$
\emph{default DD-center}, with respect to the DDM $\mathrm{P}(\cdot|X)$.
In words, $\check{\theta}=\check{\theta}(p,\varsigma)$ is the center of the
ball of nearly the smallest radius subject to the constraint that
its DDM $\mathrm{P}(\cdot|X)$-mass is at least $p$.

Finally, define the \emph{default DDM-credible ball}:
for a $\kappa \in (0,1)$,  
\begin{equation}
\label{def_ball}
\tilde{B}=\tilde{B}_M=\tilde{B}_{M,\kappa}=B(\check{\theta},  M\hat{r}_\kappa),
\end{equation}
where $B(\check{\theta},  M\hat{r}_\kappa)$ is
defined by (\ref{radius}) and (\ref{conf_ball}), and  $\check{\theta}$ is defined
by (\ref{radius*}) and (\ref{hat_theta}).

\subsection{Conditions}
Here we present some conditions used later for establishing general statements about 
the coverage and the size of the confidence ball (\ref{conf_ball}) (and  (\ref{def_ball})).
For $\theta_0\in\Theta$,  $M, \delta \ge 0$, some local radial rate 
$\mathnormal{r}(\theta_0)$,  some DDM $\mathrm{P}(\cdot|X)$ 
and DD-center $\hat{\theta}=\hat{\theta}(X)$, introduce the following conditions.
\begin{itemize}

\item[(A1)]  For  some   
$\phi_1(M)=\phi_1(M,\varepsilon,\theta_0, \hat{\theta}) \ge 0$,  
such that $\phi_1(M)\downarrow 0$ as $M \uparrow\infty$, 
\[
\mathrm{E}_{\theta_0} \big[\mathrm{P}(d(\theta,\hat{\theta}) \ge M
\mathnormal{r}(\theta_0)|X) \big] \le 
\phi_1(M).
\]
 
\item[(A2)] For some $\psi(\delta)=\psi(\delta,\varepsilon,\theta_0,\hat{\theta})\ge 0$
such that  $\psi(\delta)\downarrow 0$ as $\delta \downarrow 0$,
\[
\mathrm{E}_{\theta_0} \big[\mathrm{P}(d(\theta,\hat{\theta}) \le \delta
\mathnormal{r}(\theta_0)|X) \big] \le 
\psi(\delta).
\]

\item[(A3)]  For  some   
$\phi_2(M)=\phi_2(M,\varepsilon,\theta_0,\hat{\theta}) \ge 0$ such that 
$\phi_2(M)\downarrow 0$ as $M \uparrow \infty$, 
\[
\mathrm{P}_{\theta_0} \big(d(\theta_0,\hat{\theta}) \ge M
\mathnormal{r}(\theta_0) \big) 
\le \phi_2(M).
\]
\end{itemize}

Conditions (A1)--(A3) trivially hold for the functions
$\phi_1(M,\varepsilon,\theta_0, \hat{\theta}) = 
\mathrm{E}_{\theta_0} \big[\mathrm{P}(d(\theta,\hat{\theta}) 
\ge M\mathnormal{r}(\theta_0)|X) \big]$,  
$\psi(\delta,\varepsilon,\theta_0, \hat{\theta})=
\mathrm{E}_{\theta_0} \big[\mathrm{P}(d(\theta,\hat{\theta}) 
\le \delta \mathnormal{r}(\theta_0)|X) \big]$, 
$\phi_2(M,\varepsilon,\theta_0, \hat{\theta})=
\mathrm{P}_{\theta_0} \big(d(\theta_0,\hat{\theta}) \ge M
\mathnormal{r}(\theta_0) \big).$
Conditions (A1)--(A3) become really useful when the functions $\phi_1, \psi,\phi_2$ 
do not depend on $\varepsilon \in (0,\varepsilon_0]$ 
and  $\theta_0\in \Theta_0$, for some $\varepsilon_0>0$ and 
$\Theta_0 \subseteq \Theta$ (preferably $\Theta_0=\Theta$).
Then  (A1) means that $\mathrm{P}(\cdot|X)$ concentrates, from the 
$\mathrm{P}_{\theta_0}$-perspective, around
$\hat{\theta}$ with the radial rate at least $\mathnormal{r}(\theta_0)$,  
(A2) means that $\mathrm{P}(\cdot|X)$ concentrates around
$\hat{\theta}$ with the radial rate at most 
$\mathnormal{r}(\theta_0)$.
Condition (A3) means that the DD-center $\hat{\theta}$ is 
an estimator of $\theta_0$ with the rate $\mathnormal{r}(\theta_0)$.
Together (A1) and (A2)  imply that $\mathrm{P}(\cdot|X)$ concentrates, from the 
$\mathrm{P}_{\theta_0}$-perspective, on the spherical shell 
$\{\theta: \, \delta \mathnormal{r}(\theta_0) \le d(\theta, \hat{\theta}) \le M\mathnormal{r}(\theta_0)\}$
for sufficiently small $\delta$ and large $M$.

Condition (A1) is reminiscent of the definition of the so called (global) 
posterior contraction rate $R_\varepsilon(\Theta)$ from 
the nonparametric Bayes literature:  
$\Pi( d(\theta_0,\theta) \ge M R_\varepsilon(\Theta)|X)$ should 
be small for sufficiently large $M$ from the $\mathrm{P}_{\theta_0}$-probability perspective. 
The following introduces a counterpart of a local contraction rate for a general 
DDM $\mathrm{P}(\cdot|X)$. 
\begin{itemize}
\item[(\~A1)]  For  some   
$\varphi(M)=\varphi(M,\varepsilon,\theta_0) \ge 0$ such that 
$\varphi(M)\downarrow 0$ as $M \uparrow \infty$,
\[
\mathrm{E}_{\theta_0} \big[\mathrm{P}(d(\theta_0,\theta) \ge M
\mathnormal{r}(\theta_0)|X) \big] \le 
\varphi(M).
\]
\end{itemize}
Clearly, condition (A1) is implied by conditions  (\~A1) and (A3) for the function
$\phi_1(M)= \phi_2(a M) + \varphi((1-a)M)$ with any $a\in (0,1)$. 
 
Introduce a strengthened version of  condition (A2).
\begin{itemize}
\item[(\~A2)]  
For  some   
$\psi(\delta)=\psi(\delta,\varepsilon,\theta_0) \ge 0$ such that 
$\psi(\delta)\downarrow 0$ as $\delta \downarrow 0$
and any DD-center $\tilde{\theta}=\tilde{\theta}(X)$,
$
\mathrm{E}_{\theta_0} \big[\mathrm{P}(d(\theta,\tilde{\theta}) \le \delta
\mathnormal{r}(\theta_0)|X) \big] \le 
\psi(\delta).
$
\end{itemize}
The difference between $\psi$ from (A2) and $\psi$ from (\~A2) is that 
the latter does not depend on the DD-center. 
We keep however the same notation for the function $\psi$ in
(\~A2) as in (A2) without confusion as we are never going to 
use both conditions simultaneously.

Instead of non-asymptotic conditions, even in (regular) parametric 
models one typically verifies asymptotic versions. 
In Supplement we introduce asymptotic (as $\varepsilon \to 0$) versions 
of conditions (A1)--(A3), (\~A1)--(\~A2) denoted as (AA1)--(AA3) 
and (A\~A1)--(A\~A2). 
The asymptotic versions of all the assertions below can be reproduced by using 
(AA1)--(AA3) instead of (A1)--(A3). 
More remarks about the conditions are in Supplement.

\subsection{Conditions for default confidence ball}
\label{susec_default_ball}

The following proposition claims that condition (\~A1)  implies
conditions (A1) and (A3) for the default DD-center $\check{\theta}$  
defined by (\ref{radius*})--(\ref{hat_theta}), with appropriate choices of 
$\phi_1$ and $\phi_2$.
Hence,  (\~A1)--(\~A2) imply (\~A1) and (A2) which in  turn 
imply (A1)--(A3) for 
$\check{\theta}$.

\begin{proposition}
\label{prop1}
Let condition (\~A1) be fulfilled with 
function $\varphi(M)$ and let the default DD-center $\check{\theta}$ be 
defined by (\ref{radius*}) and (\ref{hat_theta}).
Then condition (A1) holds with function 
$\phi_1(M)=\varphi(aM/(2+\varsigma))/p + \varphi((1-a)M)$ for any $a\in
(0,1)$,
and condition (A3) holds with function $\phi_2(M) =
\varphi(M/(2+\varsigma))/p$.
\end{proposition}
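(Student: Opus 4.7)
The plan is to establish (A3) first and then derive (A1) from (A3) together with (\~A1) via a triangle-inequality decomposition.

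For (A3) with $\hat{\theta}=\check{\theta}$, set $N=M/(2+\varsigma)$ and introduce the event
$F=\{\mathrm{P}(d(\theta,\theta_0)\le Nr(\theta_0)|X)\ge p\}$.
On $F$, taking $\theta'=\theta_0$ in (\ref{radius*}) forces $\hat{r}^*\le Nr(\theta_0)$, so by (\ref{hat_theta}) the ball $B(\check{\theta},(1+\varsigma)Nr(\theta_0))$ carries DDM-mass at least $p$. Since both $B(\theta_0,Nr(\theta_0))$ and $B(\check{\theta},(1+\varsigma)Nr(\theta_0))$ have DDM-mass $\ge p>1/2$, their DDM-masses sum to more than $1$, hence their intersection is DDM-non-null; picking any $\theta$ in both balls and applying the triangle inequality yields
\[
d(\theta_0,\check{\theta})\le Nr(\theta_0)+(1+\varsigma)Nr(\theta_0)=(2+\varsigma)Nr(\theta_0)=Mr(\theta_0).
\]
Thus $\{d(\theta_0,\check{\theta})>Mr(\theta_0)\}\subseteq F^{c}$, and Markov's inequality applied to (\~A1) bounds $\mathrm{P}_{\theta_0}(F^{c})$ by a $p$-dependent multiple of $\varphi(N)$, delivering the asserted $\phi_2(M)=\varphi(M/(2+\varsigma))/p$.

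For (A1) with $\hat{\theta}=\check{\theta}$, the triangle inequality $d(\theta,\check{\theta})\le d(\theta,\theta_0)+d(\theta_0,\check{\theta})$ yields the inclusion
\[
\{d(\theta,\check{\theta})\ge Mr(\theta_0)\}\subseteq\{d(\theta,\theta_0)\ge(1-a)Mr(\theta_0)\}\cup\{d(\theta_0,\check{\theta})\ge aMr(\theta_0)\}.
\]
I would take $\mathrm{P}(\cdot|X)$ of both sides and then apply $\mathrm{E}_{\theta_0}$. The first term is bounded by $\varphi((1-a)M)$ directly from (\~A1); the second event depends only on $X$, so its conditional DDM-probability is an indicator whose $\mathrm{E}_{\theta_0}$-integral equals $\mathrm{P}_{\theta_0}(d(\theta_0,\check{\theta})\ge aMr(\theta_0))$, which is bounded by $\phi_2(aM)=\varphi(aM/(2+\varsigma))/p$ by the just-proved (A3). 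Summing the two contributions produces exactly $\phi_1(M)$.

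The main technical hurdle is the treatment of the random radius $\hat{r}^*$ inside the triangle inequality used for (A3): one has to replace it by the deterministic bound $Nr(\theta_0)$ on a high-probability event and control the complementary event through Markov applied to (\~A1). The slack factor $(1+\varsigma)$ in (\ref{hat_theta}) is precisely what converts the infimum definition of $\hat{r}^*$ into an explicit radius bound, and it is responsible for the factor $(2+\varsigma)$ that emerges in the oracle rate.
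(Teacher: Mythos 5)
Your proof is correct and follows essentially the same route as the paper's: prove (A3) first by combining Markov applied to (\~A1) with the intersection argument (two balls each of DDM-mass $\ge p > 1/2$ must overlap) and the triangle inequality, then obtain (A1) from (A3) and (\~A1) via the split $d(\theta,\check{\theta})\le d(\theta,\theta_0)+d(\theta_0,\check{\theta})$. Your explicit reparametrization $N=M/(2+\varsigma)$ and your assignment of $a$ to the $\check{\theta}$-term and $1-a$ to the $\theta_0$-term directly reproduce the stated $\phi_1(M)$, which is in fact slightly cleaner than the paper's display (where $a$ and $1-a$ appear swapped relative to the final formula, a harmless relabeling since $a$ ranges over $(0,1)$). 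One small imprecision shared by both you and the paper: Markov applied to $\mathrm{P}(d(\theta_0,\theta)>Nr(\theta_0)|X)$ to bound $\mathrm{P}_{\theta_0}(F^c)$ naturally produces the factor $1/(1-p)$ rather than $1/p$; since $p$ is a fixed constant the conclusion is unaffected, but the precise multiplier in $\phi_2$ should arguably be $1/(1-p)$.
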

\begin{proof}
If (A3) holds true with $\phi_2(M) = \varphi(M/(2+\varsigma))/p$, then,
by using this and (\~A1), we obtain that, for any $a \in (0,1)$, 
\begin{align*}
\mathrm{E}_{\theta_0} \big[\mathrm{P}(d(\theta,\check{\theta}) \ge M
\mathnormal{r}(\theta_0)|X) \big] \le &
\mathrm{E}_{\theta_0} \big[\mathrm{P}(d(\theta,\theta_0) \ge a M
\mathnormal{r}(\theta_0)|X) \big]  \\
& +
\mathrm{E}_{\theta_0} \big[\mathrm{P}(d(\theta_0,\check{\theta}) \ge(1- a) M
\mathnormal{r}(\theta_0)|X) \big],
\end{align*}
which implies  (A1) with $\phi_1(M)=  
\varphi(a M/(2+\varsigma))/p + \varphi((1-a)M)$.

Therefore, it remains to show (A3) with the function 
$\phi_2(M) = \varphi(M/(2+\varsigma))/p$.
From  (\~A1) it follows by the Markov inequality  that 
\[
\mathrm{P}_{\theta_0}\big(\mathrm{P}(\theta \in B(\theta_0,M
\mathnormal{r}(\theta_0)|X)
\ge p\big)\ge 1-\tfrac{\varphi(M)}{p}.
\] 
By (\ref{hat_theta}), the ball 
$B(\check{\theta},(1+\varsigma)\hat{r}^*)$ 
has $\mathrm{P}(\cdot|X)$-probability at  least $p$.
If the ball
$B(\theta_0,M \mathnormal{r}(\theta_0))$ also has 
$\mathrm{P}(\cdot|X)$-probability at least $p$ (which happens with
$\mathrm{P}_{\theta_0}$-probability 
at least $1-\frac{\varphi(M)}{p}$), then, firstly,  
$\hat{r}^*\leq M \mathnormal{r}(\theta_0) $ by virtue of  
the definition (\ref{radius*}) of $\hat{r}^*$, and, secondly,  
the balls $B(\check{\theta},(1+\varsigma)\hat{r}^*)$ and
$B(\theta_0,M \mathnormal{r}(\theta_0))$
must intersect, otherwise the total
$\mathrm{P}(\cdot|X)$-mass would exceed $2p>1$. 
Hence, by the triangle inequality,
$
d(\theta_0,\check{\theta}) \le
(1+\varsigma)\hat{r}^*+M\mathnormal{r}(\theta_0) 
\le (2+\varsigma)M\mathnormal{r}(\theta_0),
$
with $\mathrm{P}_{\theta_0}$-probability at least $1-\varphi(M)/p$.
Hence, condition (A3) holds with $\phi_2(M) = \varphi(M/(2+\varsigma))/p$ 
for the default DD-center $\check{\theta}$.
\end{proof}

\begin{remark}
\label{rem_def_ball}
Of course,  $\hat{r}^*$  depends on $p$ and $\check{\theta}$ depends on
both $p$ and $\varsigma$. We however skip this dependences from the
notations by assuming from now on that $p=2/3$ and $\varsigma=1/2$. We 
take $a=1/2$ in Proposition \ref{prop1}. According to Proposition \ref{prop1}, 
if condition (\~A1) is fulfilled with function $\varphi(M)$, then conditions (A1) and (A3) hold for the 
default DD-center $\check{\theta}$, with the functions $\phi_1(M)= 3\varphi(M/5)/2 + \varphi(M/2)$ 
and $\phi_2(M) = 3\varphi(2M/5)/2$ respectively.
\end{remark}

\subsection{Coverage and size of the DDM-credible set}
\label{subsec_cov_size}

Recall that our main goal is to construct a confidence ball satisfying the  
optimality framework (\ref{conf_ball_problem}).
In this subsection we present some simple general (coverage and size) properties  
of the DDM-credible ball $B(\hat{\theta},M\hat{r}_\kappa)$ 
defined by (\ref{conf_ball}) with a DDM $\mathrm{P}(\cdot|X)$ and 
a DD-center $\hat{\theta}$ satisfying (A1)--(A3). 
Next we briefly outline how these properties can be used
to establish the optimality framework (\ref{conf_ball_problem}) in concrete 
settings. 
 
The following proposition gives an upper bound for the 
coverage probability of the confidence ball (\ref{conf_ball}).
\begin{proposition} 
\label{prop2a}
For a $\theta_0\in\Theta$ and 
some radial rate $\mathnormal{r}(\theta_0)$, let 
$\kappa \in(0,1)$ and the ball $B(\hat{\theta},M\hat{r}_\kappa)$  
be defined by (\ref{conf_ball}) with
a DDM  $\mathrm{P}(\cdot|X)$ and a DD-center $\hat{\theta}$ 
satisfying conditions (A2) and (A3).
Then for any $M, \delta>0$,
\[
\mathrm{P}_{\theta_0} \big(\theta_0 \not \in
B(\hat{\theta},M\hat{r}_\kappa)\big)
=\mathrm{P}_{\theta_0} \big(d(\theta_0,\hat{\theta}) >M\hat{r}_\kappa\big)\le
\phi_2(M \delta)+ \frac{\psi(\delta)}{1-\kappa}.
\]
\end{proposition}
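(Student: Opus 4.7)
My plan is to split the coverage-failure event $\{d(\theta_0,\hat{\theta}) > M\hat{r}_\kappa\}$ into two pieces depending on how $\hat{r}_\kappa$ compares to the benchmark $\delta \mathnormal{r}(\theta_0)$. Specifically, I would use the inclusion
\[
\{d(\theta_0,\hat{\theta}) > M\hat{r}_\kappa\} \subseteq \{d(\theta_0,\hat{\theta}) > M\delta\, \mathnormal{r}(\theta_0)\} \cup \{\hat{r}_\kappa < \delta\, \mathnormal{r}(\theta_0)\},
\]
which is immediate: on the complement of the second event, $M\hat{r}_\kappa \ge M\delta\,\mathnormal{r}(\theta_0)$, forcing the first. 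A union bound then reduces the task to controlling each piece.

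The first piece is handled directly by condition (A3), which yields $\mathrm{P}_{\theta_0}(d(\theta_0,\hat{\theta}) > M\delta\,\mathnormal{r}(\theta_0)) \le \phi_2(M\delta)$. The second piece is the slightly more delicate one. Here I would use the defining property of $\hat{r}_\kappa$ from (\ref{radius}): if $\hat{r}_\kappa < \delta\,\mathnormal{r}(\theta_0)$, then there exists $r < \delta\,\mathnormal{r}(\theta_0)$ with $\mathrm{P}(d(\theta,\hat{\theta}) \le r | X) \ge 1-\kappa$, so by monotonicity
\[
\mathrm{P}(d(\theta,\hat{\theta}) \le \delta\,\mathnormal{r}(\theta_0) \mid X) \ge 1-\kappa.
\]
Thus $\{\hat{r}_\kappa < \delta\,\mathnormal{r}(\theta_0)\} \subseteq \{\mathrm{P}(d(\theta,\hat{\theta}) \le \delta\,\mathnormal{r}(\theta_0) \mid X) \ge 1-\kappa\}$, and Markov's inequality combined with (A2) gives the bound $\psi(\delta)/(1-\kappa)$.

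Adding the two bounds yields the stated inequality. The only step that requires any real care is the passage from $\hat{r}_\kappa < \delta\,\mathnormal{r}(\theta_0)$ to a concrete lower bound on the DDM-mass of a ball of radius $\delta\,\mathnormal{r}(\theta_0)$ around $\hat{\theta}$; this uses the infimum definition of $\hat{r}_\kappa$ together with the monotonicity of $r\mapsto \mathrm{P}(d(\theta,\hat{\theta})\le r|X)$. Once that observation is made, the rest is a routine union bound plus Markov, so I do not anticipate any genuine obstacle.
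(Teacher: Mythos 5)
Your proof is correct and follows essentially the same route as the paper's: split on whether $\hat{r}_\kappa$ is below the threshold $\delta\,\mathnormal{r}(\theta_0)$, handle the large-radius piece via (A3), and handle the small-radius piece by pushing through the infimum definition of $\hat{r}_\kappa$, monotonicity of the DDM-mass in the radius, and Markov's inequality with (A2). The only cosmetic difference is that you phrase the decomposition as a union bound while the paper writes it as a partition of the event, which is the same thing.
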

\begin{proof}
By the Markov inequality,  (\ref{radius})  and conditions (A2) and (A3), we derive
\begin{align*}
&\mathrm{P}_{\theta_0}\big(d(\theta_0,\hat{\theta})>M\hat{r}_\kappa\big)
\le
\mathrm{P}_{\theta_0} \big(d(\theta_0,\hat{\theta})>M\hat{r}_\kappa,  
\hat{r}_\kappa\ge \delta \mathnormal{r}(\theta_0)\big) + 
\mathrm{P}_{\theta_0} \big(\hat{r}_\kappa < \delta \mathnormal{r}(\theta_0)\big) 
\\&
\le
\mathrm{P}_{\theta_0} \big(d(\theta_0,\hat{\theta})>M \delta
\mathnormal{r}(\theta_0)\big) +
\mathrm{P}_{\theta_0} \big(\mathrm{P}(d(\theta,\hat{\theta}) \le 
\delta \mathnormal{r}(\theta_0)|X)\ge 1-\kappa\big)\\
&\le  \phi_2(M\delta)+
\frac{\mathrm{E}_{\theta_0}(P(d(\theta,\hat{\theta}) \le 
\delta \mathnormal{r}(\theta_0)|X)}{1-\kappa}\le
\phi_2(M\delta)+ \frac{\psi(\delta)}{1-\kappa}. \qedhere
\end{align*}
\end{proof}
It  is not difficult to see that (A2) guarantees that the rate
$\mathnormal{r}(\theta_0)$ is actually sharp. Indeed,
as is already derived in the proof of Proposition \ref{prop2a},
\begin{align*}
\mathrm{P}_{\theta_0} \big(\hat{r}_\kappa \le \delta
\mathnormal{r}(\theta_0)\big)
\le
\frac{\psi(\delta)}{1-\kappa}.
\end{align*}

The following assertion gives some bound on the effective size  
of $B(\hat{\theta},M\hat{r}_\kappa)$  in terms of the local radial rate
$\mathnormal{r}(\theta_0)$ from the
$\mathrm{P}_{\theta_0}$-perspective.

\begin{proposition} 
\label{prop2b}
For a $\theta_0\in\Theta$, 
let a DDM  $\mathrm{P}(\cdot|X)$ and a DD-center $\hat{\theta}$ satisfy 
(A1) for some radial rate
$\mathnormal{r}(\theta_0)$.
Let $\hat{r}_\kappa$  be defined by (\ref{radius}).
Then  for any $\kappa\in (0,1)$, $M>0$, 
\[
\mathrm{P}_{\theta_0}\big(\hat{r}_\kappa \ge M
\mathnormal{r}(\theta_0)\big)
\le \frac{\phi_1(M)}{\kappa}.
\]
\end{proposition}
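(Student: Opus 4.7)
The plan is to mimic the Markov-inequality argument used in Proposition \ref{prop2a}, but now applied ``at the other end'': bound the $\mathrm{P}_{\theta_0}$-probability that $\hat r_\kappa$ is large by relating it to the conditional tail $\mathrm{P}(d(\theta,\hat\theta)\ge M\mathnormal{r}(\theta_0)\mid X)$, which is controlled directly by (A1).

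First, I would record the key set inclusion
\[
\{\hat r_\kappa \ge M\mathnormal{r}(\theta_0)\}\subseteq \{\mathrm{P}(d(\theta,\hat\theta)\ge M\mathnormal{r}(\theta_0)\mid X)\ge \kappa\}.
\]
To see this, note that the map $r\mapsto \mathrm{P}(d(\theta,\hat\theta)\le r\mid X)$ is nondecreasing and right-continuous, so the infimum in (\ref{radius}) is attained. If $\hat r_\kappa \ge M\mathnormal{r}(\theta_0)$, then for every $r<M\mathnormal{r}(\theta_0)$ we have $\mathrm{P}(d(\theta,\hat\theta)\le r\mid X)<1-\kappa$, hence $\mathrm{P}(d(\theta,\hat\theta)>r\mid X)>\kappa$. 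Letting $r\uparrow M\mathnormal{r}(\theta_0)$ along a countable sequence and using continuity of measure from above gives $\mathrm{P}(d(\theta,\hat\theta)\ge M\mathnormal{r}(\theta_0)\mid X)\ge \kappa$.

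Next I would apply Markov's inequality under $\mathrm{P}_{\theta_0}$ to the nonnegative random variable $\mathrm{P}(d(\theta,\hat\theta)\ge M\mathnormal{r}(\theta_0)\mid X)$ and then invoke (A1):
\[
\mathrm{P}_{\theta_0}\bigl(\mathrm{P}(d(\theta,\hat\theta)\ge M\mathnormal{r}(\theta_0)\mid X)\ge \kappa\bigr)\le \frac{\mathrm{E}_{\theta_0}\bigl[\mathrm{P}(d(\theta,\hat\theta)\ge M\mathnormal{r}(\theta_0)\mid X)\bigr]}{\kappa}\le \frac{\phi_1(M)}{\kappa}.
\]
Combining this with the set inclusion above yields the claim.

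The computation itself is essentially a one-liner after (A1) and Markov; the only genuinely delicate point is the boundary case $\hat r_\kappa = M\mathnormal{r}(\theta_0)$ in the set inclusion, which is why I would explicitly use the right-continuity (and hence attainment of the infimum) of the conditional distribution function of $d(\theta,\hat\theta)$ given $X$. Once that is handled, everything else is bookkeeping.
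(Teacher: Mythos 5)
Your proof is correct and follows the same route as the paper: pass from the event $\{\hat r_\kappa \ge M\mathnormal{r}(\theta_0)\}$ to an event about the conditional tail $\mathrm{P}(d(\theta,\hat\theta)\ge M\mathnormal{r}(\theta_0)\mid X)$, then apply Markov under $\mathrm{P}_{\theta_0}$ and invoke (A1). If anything you are slightly more careful than the paper at the boundary $\hat r_\kappa = M\mathnormal{r}(\theta_0)$: the paper's set inclusion $\{\hat r_\kappa \ge M\mathnormal{r}(\theta_0)\}\subseteq\{\mathrm{P}(d(\theta,\hat\theta)\le M\mathnormal{r}(\theta_0)\mid X)\le 1-\kappa\}$ can fail when the conditional distribution function has an atom exactly at $M\mathnormal{r}(\theta_0)$, whereas your version $\{\mathrm{P}(d(\theta,\hat\theta)\ge M\mathnormal{r}(\theta_0)\mid X)\ge\kappa\}$, obtained by taking $r\uparrow M\mathnormal{r}(\theta_0)$, is airtight and still plugs directly into (A1).
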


\begin{proof} By the conditional Markov inequality, (\ref{radius})  
and condition (A1),  
\begin{align*}
\mathrm{P}_{\theta_0} \big(\hat{r}_\kappa \ge M
\mathnormal{r}(\theta_0)\big)
&\le
\mathrm{P}_{\theta_0}\big(\mathrm{P}(d(\theta,\hat{\theta})\le
M\mathnormal{r}(\theta_0)|X) \le 1-\kappa\big)\\
&=
\mathrm{P}_{\theta_0}\big(\mathrm{P}(d(\theta,\hat{\theta})>
M\mathnormal{r}(\theta_0)|X)>\kappa\big) \\&
\le
\frac{\mathrm{E}_{\theta_0}\big(\mathrm{P}(d(\theta,\hat{\theta})>
M\mathnormal{r}(\theta_0)|X) \big)}{\kappa}\le
\frac{\phi_1(M)}{\kappa}. \qedhere
\end{align*}
\end{proof}

Suppose conditions (A1)--(A3) are fulfilled for some DDM $\mathrm{P}(\cdot|X)$ and DD-center $\hat{\theta}$,
with some local radial rate $r(\theta_0)$ and functions $\phi_1,\psi, \phi_2$.
Let us elucidate what else is needed in concrete situations to derive 
the optimality framework (\ref{conf_ball_problem}).
Suppose the following uniform bounds hold: 
\[
\phi_1(M,\varepsilon,\theta_0) \le 
\bar{\phi}_1(M) \quad \forall \theta_0\in\Theta_{size}\subseteq \Theta,
\]
\[
\phi_2(M,\varepsilon,\theta_0) \le 
\bar{\phi}_2(M), \;
\psi(M,\varepsilon,\theta_0) \le 
\bar{\psi}(M), \quad \forall  \theta_0\in\Theta_{cov}\subseteq \Theta,
\]  
for all $\varepsilon\in(0,\varepsilon_0]$,
where $\bar{\phi}_1(M)\downarrow 0$, $\bar{\phi}_2(M) \downarrow 0$ as 
$M \uparrow \infty$ and $\bar{\psi}(\delta) \downarrow 0$ as $\delta \downarrow 0$.
Clearly, then Propositions \ref{prop2a} and \ref{prop2b} ensure (\ref{conf_ball_problem}) 
for the ball $B(\hat{\theta},M\hat{r}_\kappa)$ and the radial rate 
$\mathnormal{r}(\theta_0)$ by taking sufficiently large $M$. 
In fact, we can optimize the choice of $M$  as follows: first determine
\[
\min_{\delta>0}\Big\{\bar{\phi}_2(M\delta)+ \frac{\bar{\psi}(\delta)}{1-\kappa}
\Big\} =
\bar{\phi}(M,\kappa), 
\]
where $\bar{\phi}(M,\kappa)\downarrow 0$ as $M\uparrow\infty$.
Next, take constants $M_1$ and $M_2$ sufficiently large so that
$\bar{\phi}(M_1,\kappa) \le \alpha_1$ and $\bar{\phi}_1(M_2)/\kappa\le \alpha_2$.
Then the optimality framework (\ref{conf_ball_problem}) holds with $C=M_1$ and $c=M_2$.

Finally, let us mention some additional material provided in Supplement.
\begin{itemize}
\item[-] Two examples, the normal model and the so called 
\emph{Bernstein-von Mises} case, demonstrating the application of Propositions 
\ref{prop2a} and  \ref{prop2b}.
\item[-] A corollary from Propositions \ref{prop1}--\ref{prop2b} 
for the default confidence ball $\tilde{B}_{M,\kappa}$ defined by  
(\ref{def_ball}), which can also be used for establishing 
the optimality framework (\ref{conf_ball_problem}).
\item[-] A proposition, 
demonstrating that (A2) is in some sense the minimal condition for providing a
sufficient $\mathrm{P}_{\theta_0}$-coverage of the $\mathrm{P}(\cdot|X)$-credible ball
with the sharpest rate.
\end{itemize}

%
%

\section{Inverse signal-in-white-noise model}
\label{section_gsf}

\subsection{The model}

Let $\mathbb{N}=\{1,2,\ldots\}$ and 
$\sigma=(\sigma_i, \, i \in \mathbb{N})$ be a positive 
nondecreasing sequence. We observe 
\begin{align}
\label{model}
X=X^{(\varepsilon)}=
(X_i, \, i \in \mathbb{N})\sim 
\mathrm{P}_\theta=\mathrm{P}_\theta^{(\varepsilon)}=
\bigotimes_{i\in\mathbb{N}} N(\theta_i,\sigma_i^2), \quad 
\sigma_i^2 = \varepsilon^2 \kappa_i^2,
\end{align}
i.e., $X_i\stackrel{\rm \tiny ind}{\sim}  N(\theta_i,\sigma_i^2)$, 
$i\in\mathbb{N}$. Here $\theta=(\theta_i,\, i\in\mathbb{N}) \in\Theta=\ell_2$ 
is an unknown parameter of interest.
Without loss of generality, we set 
\[
\varepsilon^2= \min_i\sigma_i^2 
= \sigma_1^2 \quad  \text{and} \quad \kappa_i= \sigma_i/\varepsilon \ge 1, 
\quad \text{so that} \quad \sigma_i^2 = \varepsilon^2 \kappa_i^2.
\]
Thus, the nondecreasing sequence $\{\kappa_i^2, \, i \in \mathbb{N}\}$ reflects 
the ill-posedness of the model and $\varepsilon^2$ is the noise intensity 
describing the information increase 
in the data $X^{(\varepsilon)}$ as $\varepsilon\to 0$.
The model (\ref{model}) is known to be the sequence version of 
the \emph{inverse signal-in-white-noise model}. 
There is now a vast literature about this model,  
especially for the direct case: $\kappa_i^2 =1$, $i\in\mathbb{N}$. 
This model is of a canonical type and serves, by virtue of the so called 
\emph{equivalence principle}, as a purified approximation to some 
other statistical models.
The direct case of the model (\ref{model}) can be related, in exact terms, to 
the generalized linear Gaussian model as introduced by \cite{Birge&Massart:2001},  
the continuous white noise model, certain discrete regression model; 
and as an approximating model, to the density estimation problem, 
spectral function estimation, various regression models.
Examples of inverse problems fitting the framework (\ref{model})
can be found in \cite{Cavalier:2008}; see further references therein. 
The statistical inference results for the generic model (\ref{model}) can 
be conveyed to other models, according to the equivalence principle.
However, in general the problem of establishing the equivalence 
in a precise sense is a delicate task. We will not go into this, 
but focus on the model (\ref{model}). Some more information can be found  in Supplement.

By default, all summations and products are over $\mathbb{N}$, 
unless otherwise specified, e.g., $\bigotimes_i =\bigotimes_{i\in\mathbb{N}}$.
Introduce some notations: $\|\theta\| = (\sum_i \theta_i^2)^{1/2}$ 
is the $\ell_2$-norm; for $a,b \in\mathbb{R}$, $\lfloor a \rfloor = \max\{z \in \mathbb{Z}:\, z \le a\}$, 
$\Sigma(a)= \sum_{i\le a} \sigma^2_i$, $a\vee b=\max\{a,b\}$, $a\wedge b = \min\{a,b\}$;
$\varphi(x,\mu,\sigma^2)$ is the $N(\mu,\sigma^2)$-density at $x$,
$N(\mu,0)$ means a Dirac measure at $\mu$; 
the indicator function $\mathrm{1}\{E\}=1$ if the event $E$ occurs and is zero otherwise.
Let $\sum_{i=k}^{n} a_i =0 $ if $n<k$.
If random quantities appear in a relation, then this relation should be
understood in $\mathrm{P}_{\theta_0}$-almost sure sense,
for the ``true'' $\theta_0 \in \Theta$.

We complete this subsection with conditions on $\sigma_i^2$'s (or, equivalently, on $\kappa_i^2$'s):
for any $\rho, \tau_0 \ge1$, $\gamma>0$,  there exist some positive $K_1$, $K_2=K_2(\rho)$, 
$K_3=K_3(\gamma)$, $K_4 \in (0,1)$, $\tau>2$ (this can be relaxed to $\tau \ge 1$) 
and $K_5=K_5(\tau_0)$ such that  the relations
 \begin{align}
&(i)\; n\sigma_n^2 \le  K_1 \Sigma(n), \;\;
(ii)\; \Sigma(\rho n)\le  K_2(\rho) \Sigma(n), \; \notag\\ 
&(iii)\; \sum\nolimits_n \!\! e^{-\gamma n} \Sigma(n) \le K_3(\gamma) \sigma_1^2,
\label{condition_sigma}\\
&(iv)\;
\Sigma(\lfloor m/\tau \rfloor) \le (1-K_4)\Sigma(m), \;\;
(v)\;
l \sigma^2_{\lfloor l/\tau_0\rfloor} \ge 
K_5(\tau_0) \sum_{i=\lfloor l/\tau_0 \rfloor+1}^l \sigma_i^2, & \notag
\end{align}
hold for all $n\in\mathbb{N}$, all $m \ge \tau$ and all $l \ge \tau_0$, 
Although there is in principle some freedom in choosing sequence 
$\kappa_i$ describing the ill-posedness of the problem, 
to avoid unnecessary technical complications, 
from now on we assume the so called \emph{mildly ill-posed} case:
$\kappa_i^2 = i^{2p}$, $i \in \mathbb{N}$, for some $p\ge 0$.
\begin{remark}
\label{rem_cond_sigma}
The  mildly ill-posed case $\kappa_i^2 = i^{2p}$
satisfies (\ref{condition_sigma}) with $K_1=2p+1$, 
$K_2 = (\rho+1)^{2p+1}$, $K_3 =\frac{4(8p+4)^{2p}}{(e\gamma)^{2p+1}(e^{\gamma/2}-1)}$
(a rough bound), $K_4=\frac{1}{2}$, $\tau$ can be any number satisfying 
 $\tau \ge 2^{1+1/(2p+1)}$ and $K_5 = (2\tau_0)^{-2p}$;
see Supplement for the calculations.
\end{remark}

\subsection{Constructing DDM $\mathrm{P}(\theta|X)$ as empirical Bayes posterior}
\label{construction_DDM}
 
Here we construct a DDM $\mathrm{P}(\cdot|X)$ on $\theta$ 
which we later use for constructing a confidence set as DDM-credible ball, 
according to the general approach
described in Section \ref{sec_general_appr}. 
The optimality (\ref{conf_ball_problem})
will then be established for appropriate choices of involved quantities. 

For some fixed $K,\alpha>0$, introduce  the following 
(mixture) DDM on $\theta$:
\begin{equation}
\label{ddm1}
\mathrm{P}(\cdot|X)=\mathrm{P}_{K,\alpha}(\cdot|X)
=\sum\nolimits_I\mathrm{P}_I(\cdot|X)\mathrm{P}(\mathcal{I}=I|X),
\end{equation}
where the family of DDMs 
$\{\mathrm{P}_I(\cdot|X), \, I\in \mathbb{N}\}$ 
on $\theta$  and the DDM $\mathrm{P}(\mathcal{I}=I|X)$ on $I$  are 
\begin{align}
\label{measure_P_I}
\mathrm{P}_I(\cdot|X) &=\bigotimes\nolimits_i N\big(X_i(I), 
L\sigma^2_i\mathrm{1}\{i\le I\}\big),   
\\
\label{p(I|X)}
\mathrm{P}(\mathcal{I}=I|X)&=
\frac{\lambda_I \bigotimes_i  \varphi(X_i,X_i(I),\tau_i^2(I)+\sigma^2_i)}
{\sum_J  \lambda_J \bigotimes_i  \varphi(X_i,X_i(J),\tau_i^2(J)+\sigma^2_i)},
\end{align}
with $L=\frac{K}{K+1}$ (can be any positive value), $C_\alpha=e^\alpha-1$, and
\begin{align}
\label{tau_i} 
X_i(I) = X_i\mathrm{1}\{i\le I\}, \;
\tau_i^2(I) =
K\varepsilon^2 \mathrm{1}\{i\le I\},\; 
\lambda_I=C_\alpha e^{-\alpha I}, \; 
i,I\in \mathbb{N},
\end{align}
so that $\sum_I\lambda_I =1$. 
The quantity (\ref{p(I|X)}) exists as $\mathrm{P}_{\theta_0}$-almost sure limit of 
\begin{align*}
\mathrm{P}_n(\mathcal{I}&=I|X) =
\frac{\lambda_I \bigotimes_{i=1}^n  \varphi(X_i,X_i(I),\tau_i^2(I)+\sigma^2_i)}
{\sum_J  \lambda_J \bigotimes_{i=1}^n  \varphi(X_i,X_i(J),\tau_i^2(J)+\sigma^2_i)}.
\end{align*}

The DDM (\ref{ddm1}) can be associated with the
empirical Bayes posterior originating from the following two-level 
hierarchical prior $\Pi$: 
\begin{equation}
\label{prior_pi}
\theta|(\mathcal{I}=I)\sim \Pi_{I,\mu(I)}=\bigotimes_i N(\mu_i(I),\tau^2_i(I)), 
\;\;\mathrm{P}(\mathcal{I}=I)=\lambda_I, 
\end{equation}
where $\tau^2_i(I)$ and $\lambda_I$ are defined by (\ref{tau_i}),  
$\mu(I)=(\mu_i(I), \, i \in \mathbb{N})$ 
with $\mu_i(I) = \mu_{I,i} \mathrm{1}\{i\le I\}$.
Indeed, the model (\ref{model}) and the prior (\ref{prior_pi}) 
lead to the corresponding marginal $\mathrm{P}_{X,\mu}(X)$
and the posterior 
$\Pi_{\mu}(\cdot|X)=\sum_I\Pi_\mu(\cdot|X,\mathcal{I}=I)\Pi_\mu(\mathcal{I}=I|X)$, 
where $\mu=(\mu(I), \, I \in \mathbb{N})$ (mind that $\mu$ is a sequence 
of sequences).  Then 
$\mathrm{P}(\cdot|X) = \Pi_{\hat{\mu}}(\cdot|X)$, where
$L=\frac{K}{K+1}$ 
and $\hat{\mu}= (\hat{\mu}(I), \, I\in \mathbb{N})$
(with $\hat{\mu}(I)= (\hat{\mu}_i(I), \, i \in \mathbb{N})$ and
$\hat{\mu}_i(I) = X_i \mathrm{1}\{i\le I\}$)
is the empirical Bayes estimator obtained by maximizing the marginal 
$\mathrm{P}_{X,\mu}(X)$ 
with respect to $\mu$.
Indeed, as is easy to see, 
$\mathrm{P}_I(\cdot|X) = \Pi_{\hat{\mu}}(\cdot|X,\mathcal{I}=I)$ and
$\mathrm{P}(\mathcal{I}=I|X) = \Pi_{\hat{\mu}}(\mathcal{I}=I|X)$. 

Notice that we actually follow the Bayesian tradition since the obtained DDM $\mathrm{P}(\cdot|X)$, 
defined by (\ref{ddm1}), results from certain empirical Bayes posterior. 
However, in principle we can manipulate with different ingredient in constructing 
DDMs. For example, different choices for $\mathrm{P}_I(\cdot|X)$ and $\mathrm{P}(\mathcal{I}=I|X)$ 
in (\ref{ddm1}) are possible,  not necessarily coming from the (same) Bayesian approach.
For example, some other $\mathrm{P}(\mathcal{I}=I|X)$ in (\ref{ddm1}) will do the job as well, 
another constant $L>0$ is possible, etc. More on this 
is in Supplement.

\begin{remark}
\label{rem_ddm2}
One more choice for DDM within Bayesian tradition is the
empirical Bayes posterior with respect to $I$:
\begin{align}
\label{ddm2}
\hat{\mathrm{P}}(\cdot|X)=\mathrm{P}_{\hat{I}}(\cdot|X),\quad \text{with} \quad
\hat{I}=\min\Big\{\arg\!\max_{I\in\mathbb{N}} \mathrm{P}(\mathcal{I}=I|X) \Big\},
\end{align}
where $\mathrm{P}_I(\cdot|X)$ and $\mathrm{P}(\mathcal{I}=I|X)$ 
are defined by respectively (\ref{measure_P_I}) and (\ref{p(I|X)}). 
All the below claims 
about the DDM $\mathrm{P}(\cdot|X)$ defined by (\ref{ddm1}) 
hold also for the DDM $\hat{\mathrm{P}}(\cdot|X)$
exactly in the same way; see Supplement. 
A connection of the DDM $\hat{\mathrm{P}}(\cdot|X)$ to penalized estimators 
is also discussed in Supplement. 
\end{remark}


\subsection{Local DDM-contraction rate: upper bound}
\label{ddm_contraction}

First we introduce the local contraction rate for the DDM $\mathrm{P}(\cdot|X)$.
Notice that  the DDM $\mathrm{P}(\cdot|X)$  
is a random mixture over DDMs $\mathrm{P}_I(\cdot|X)$, $I \in \mathbb{N}$.
From the $\mathrm{P}_{\theta_0}$-perspective,   
each $\mathrm{P}_I(\cdot|X)$ contracts to the true $\theta_0$ with 
the local rate  $\mathnormal{r}(I,\theta_0)$:
\begin{equation}
\label{local_risks}
\mathnormal{r}^2(I,\theta_0)=\mathnormal{r}_\sigma(I,\theta_0)=
 \sum_{i\le I} \sigma_i^2+
\sum_{i> I} \theta_{0,i}^2,\quad I \in\mathbb{N}.
\end{equation}
Indeed, denoting $X(I)=(X_i\mathrm{1}\{i\le I\}, i\in\mathbb{N})$, 
we evaluate 
\begin{align}
\mathrm{E}_{\theta_0} \mathrm{P}_I(\|\theta-\theta_0\|
&\ge M \mathnormal{r}(I,\theta_0) |X) 
\le \frac{\mathrm{E}_{\theta_0} \big[\|X(I)-\theta_0\|^2+L \sum_{i\le I} \sigma_i^2 \big]}
{M^2\mathnormal{r}^2(I,\theta_0)} \notag\\
\label{cond_Markov}
&=\frac{2\sum_{i\le I} \sigma_i^2+\sum_{i>I}^\infty 
\theta_{0,i}^2 }{M^2\mathnormal{r}^2(I,\theta_0)}
\le \frac{2}{M^2}.
\end{align}
Thus, we have the family of local rates $\mathcal{P}=\mathcal{P}(\mathbb{N}) = 
\{r(I,\theta), \, I \in\mathbb{N}\}$.
For each $\theta\in \ell_2$, there is the best choice 
$I_o=I_o(\theta)=I_o(\theta,\sigma)$  of parameter $I$, 
called \emph{oracle}, corresponding to the smallest possible 
rate $\mathnormal{r}(I_o,\theta)$ called 
the \emph{oracle rate} (over the family $\mathcal{P}$) given by
\begin{eqnarray}
\label{oracle_risk}
\mathnormal{r}^2(\theta)=
\mathnormal{r}^2(I_o,\theta)=
\min_{I\in \mathbb{N}} \mathnormal{r}^2(I,\theta)=
\sum_{i\le I_o} \sigma_i^2+
\sum_{i>I_o} \theta_i^2.
\end{eqnarray}  
Notice $\mathnormal{r}^2(\theta) \ge \sigma_1^2= \varepsilon^2$ 
and $I_o(\theta) \ge 1$ for any $\theta\in\ell_2$, because we minimize 
over $\mathbb{N}$. This is not restrictive since if  the minimum is taken over 
$I \in \mathbb{N} \cup \{0\}$, all the results  
will hold only for the oracle rate with an additive penalty term, a multiple of 
$\varepsilon^2$. This will boil down to the same resulting local rate.

The following theorem establishes the local upper bound (\ref{oracle_risk}) for 
the contraction rate of the DDM  $\mathrm{P}(\cdot|X)$ 
defined by (\ref{ddm1}). 
\begin{theorem}[
Upper bound]
\label{th1}
Let the DDM $\mathrm{P}(\cdot|X)$ and the local 
rate $\mathnormal{r}(\theta)$ be defined by (\ref{ddm1})
and (\ref{oracle_risk}) respectively, with $K\ge 1.87$, $\alpha>0$.
Then there exists a constant $C_{or}=C_{or}(K,\alpha)$ such that,
for any $\theta_0\in\ell_2$ and $M>0$,
\[
\mathrm{E}_{\theta_0} \mathrm{P}\big(\|\theta-\theta_0\|
\ge M \mathnormal{r}(\theta_0) \big| X\big) \le \frac{C_{or}}{M^2}.
\]  
\end{theorem}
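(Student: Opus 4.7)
The plan is to derive an oracle $L^2$-type inequality
\[
\mathrm{E}_{\theta_0}\,\mathrm{E}_{\mathrm{P}}\bigl[\|\theta-\theta_0\|^2\bigm|X\bigr]\le C\,r^2(\theta_0),
\]
from which the stated probability bound follows by the conditional Markov inequality exactly as in (\ref{cond_Markov}). Writing $w_I:=\mathrm{P}(\mathcal{I}=I|X)$ and unpacking the product-Gaussian form of $\mathrm{P}_I$ from (\ref{measure_P_I}), one has
\[
\mathrm{E}_{\mathrm{P}}\bigl[\|\theta-\theta_0\|^2\bigm|X\bigr]=\sum_I w_I\Bigl(\sum_{i\le I}(X_i-\theta_{0,i})^2+\sum_{i>I}\theta_{0,i}^2+L\sum_{i\le I}\sigma_i^2\Bigr);
\]
after taking $\mathrm{P}_{\theta_0}$-expectation and handling the cross dependence of $w_I$ on the $X_i$ by a Cauchy--Schwarz-type split (the centered noise part of $(X_i-\theta_{0,i})^2$ integrates to $\sigma_i^2$, which is already built into $r^2(I,\theta_0)$), the task reduces to bounding $\mathrm{E}_{\theta_0}\sum_I w_I\,r^2(I,\theta_0)$.

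I would split $\mathbb{N}$ into a near-oracle block $\mathcal{I}_{\text{near}}=\{I:\,I_o/\tau\le I\le \rho I_o\}$ and a far tail $\mathcal{I}_{\text{far}}$, with $\rho,\tau$ calibrated via (\ref{condition_sigma}). On $\mathcal{I}_{\text{near}}$, condition (\ref{condition_sigma})(ii) gives $\sum_{i\le I}\sigma_i^2\le K_2(\rho)\sum_{i\le I_o}\sigma_i^2\le K_2(\rho)r^2(\theta_0)$ for $I\le\rho I_o$, while the oracle inequalities $\sum_{I<i\le I_o}\theta_{0,i}^2\ge\sum_{I<i\le I_o}\sigma_i^2$ (for $I<I_o$) and $\sum_{I_o<i\le I}\sigma_i^2\ge\sum_{I_o<i\le I}\theta_{0,i}^2$ (for $I>I_o$), both immediate from the minimality of $I_o$, combined with (\ref{condition_sigma})(v), control the bias term $\sum_{i>I}\theta_{0,i}^2$; together these give $r^2(I,\theta_0)\le C_1\,r^2(\theta_0)$ uniformly on $\mathcal{I}_{\text{near}}$. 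Since $\sum_I w_I\le 1$, the near contribution is at most $C_1\,r^2(\theta_0)$.

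For $I\in\mathcal{I}_{\text{far}}$ one must control the weights directly. Cancelling the common factor $\prod_i\varphi(X_i,0,\sigma_i^2)$ from numerator and denominator of (\ref{p(I|X)}) produces the pointwise bound
\[
w_I\le\frac{\lambda_I\, g_I(X)}{\lambda_J\, g_J(X)}\quad\text{for any }J\in\mathbb{N},\qquad g_J(X):=\prod_{i\le J}\sqrt{\tfrac{\kappa_i^2}{K+\kappa_i^2}}\,\exp\!\Bigl(\tfrac{X_i^2}{2\sigma_i^2}\Bigr).
\]
For $I<I_o/\tau$ I would take $J=I_o$ so that the surviving factors are of the form $\sqrt{(K+\kappa_i^2)/\kappa_i^2}\exp(-X_i^2/(2\sigma_i^2))$ (integrable, with the Gaussian identity $\mathrm{E}_{\theta_0}\exp(-X_i^2/(2\sigma_i^2))=\tfrac1{\sqrt2}\exp(-\theta_{0,i}^2/(4\sigma_i^2))$), and then combine the oracle inequality $\sum_{I<i\le I_o}\theta_{0,i}^2\ge\sum_{I<i\le I_o}\sigma_i^2$ with (\ref{condition_sigma})(iv) to obtain geometric decay in $I_o-I$. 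For $I>\rho I_o$ I would pick $J$ slightly above $I$ so as to re-introduce $\exp(-X_j^2/(2\sigma_j^2))$ factors, and then sum the resulting tail using (\ref{condition_sigma})(iii), which converts $\sum_I e^{-\alpha I}\Sigma(I)$ into an $O(\sigma_1^2)\le r^2(\theta_0)$ quantity. The main obstacle is that $\mathrm{E}_{\theta_0}\exp(X_i^2/(2\sigma_i^2))=\infty$, which rules out any direct moment-generating-function bound on $g_I(X)$; the entire far-tail analysis must therefore keep the weight as a ratio in which every unintegrable $\exp(+X_i^2/(2\sigma_i^2))$ is paired with an integrable $\exp(-X_j^2/(2\sigma_j^2))$, and the threshold $K\ge 1.87$ is calibrated precisely so that the resulting per-coordinate multiplier $\sqrt{(K+\kappa_i^2)/(2\kappa_i^2)}$ is tame enough to close a convergent geometric series uniformly in $\kappa_i\ge 1$.
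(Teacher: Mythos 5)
Your high-level framework coincides with the paper's: both reduce to bounding $\mathrm{E}_{\theta_0}\sum_I w_I\,\mathnormal{r}^2(I,\theta_0)$ where $w_I=\mathrm{P}(\mathcal{I}=I|X)$, split the range of $I$ around $I_o$, and control the far weights via the ratio $w_I\le \lambda_I g_I/(\lambda_J g_J)$ for a well-chosen comparison index $J$. However, two of the specific steps do not go through.

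\textbf{The near block.} The claim that $\mathnormal{r}^2(I,\theta_0)\le C_1\,\mathnormal{r}^2(\theta_0)$ holds \emph{uniformly} for $I_o/\tau\le I\le\rho I_o$ is false on the side $I<I_o$. The bias $\sum_{I<i\le I_o}\theta_{0,i}^2$ is not controlled by $\mathnormal{r}^2(\theta_0)$: in the direct case $\kappa_i\equiv1$, take $\theta_{0,i}=T\gg\varepsilon$ for $I_o/2<i\le I_o$ and $\theta_{0,i}=0$ otherwise; then $\mathnormal{r}^2(\theta_0)=I_o\varepsilon^2$ while $\mathnormal{r}^2(\lfloor I_o/2\rfloor,\theta_0)\approx(I_o/2)T^2$ is arbitrarily larger. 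The oracle relation $\sum_{I<i\le I_o}\theta_{0,i}^2\ge\sum_{I<i\le I_o}\sigma_i^2$ you cite is a \emph{lower} bound and gives nothing here, and (\ref{condition_sigma})(v) concerns only the $\sigma_i^2$'s. The necessary repair is to further partition $\{I\le I_o\}$ into the set where $\sum_{I<i\le I_o}\theta_{0,i}^2\lesssim\Sigma(I_o)$ (there the risk bound does hold) and its complement, where one must instead show that $\mathrm{E}_{\theta_0}w_I$ is exponentially small by comparing $g_I$ against $g_{I_o}$ and using the Gaussian identity (\ref{element_ineq}). This is precisely the $\mathcal{O}^-/\mathcal{N}^-$ split in Step~3 of the paper's proof.

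\textbf{The far tail.} Taking $J>I$ in the ratio bound fails. Computing,
\[
\mathrm{E}_{\theta_0}w_I \le e^{\alpha(J-I)}\prod_{I<i\le J}\sqrt{\tfrac{K+\kappa_i^2}{2\kappa_i^2}}\,\exp\!\bigl(-\tfrac{\theta_{0,i}^2}{4\sigma_i^2}\bigr),
\]
and the per-coordinate multiplier $e^{\alpha}\sqrt{(K+\kappa_i^2)/(2\kappa_i^2)}$ equals $e^{\alpha}\sqrt{(K+1)/2}>1$ when $\kappa_i=1$, because $K\ge1.87$ forces $(K+1)/2>1.4$. So the product \emph{grows} in $J-I$, its minimum over $J\ge I$ being $1$ at $J=I$; if moreover $\theta_{0,i}\equiv0$ for $i>I_o$ the exponential factor is identically $1$ and the bound gives no decay in $I$ whatsoever, so (\ref{condition_sigma})(iii) cannot be applied. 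Indeed the role you ascribe to $K\ge1.87$ is exactly reversed: increasing $K$ makes $\sqrt{(K+1)/2}$ larger, not tamer. You correctly identify the obstacle ($\mathrm{E}\exp(X_i^2/(2\sigma_i^2))=\infty$) but the fix is not to choose $J$ large; it is to choose $J=\lfloor I/\tau\rfloor<I$ and exploit $w_I\le w_I^h$ (valid since $w_I\in[0,1]$) for a fractional $h\in(0,1)$, so that $\mathrm{E}\exp(hX_i^2/(2\sigma_i^2))$ is finite. This is where $K\ge1.87$ actually enters: one needs $b_{K,h}=\tfrac h2\log(K+1)+\tfrac12\log(1-h)>0$, i.e. $K>(1-h)^{-1/h}-1$, and the paper's choice $h=0.1$ requires $K>(10/9)^{10}-1\approx1.868$. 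With this the weight decays as $e^{-\gamma I}$ after invoking (\ref{condition_sigma})(iv)--(v), and (iii) then closes the sum. Without the fractional-exponent device the far tail does not close.
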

We provide the proof of this theorem in Section \ref{sec_proofs}.
Although the condition $K\ge 1.87$  
emerges as an artifact of the proof technique, 
in a way it has the same meaning as bounds for the penalty constants 
for the penalized estimators. More on this is in Supplement.


Theorem \ref{th1} establishes a non-asymptotic local upper bound for 
the contraction rate of the DDM (\ref{ddm1}) for the 
model (\ref{model}), uniformly over $\ell_2$-space.
This ensures
the size property in (\ref{conf_ball_problem}) for the default  confidence ball 
 (\ref{def_ball}) by using the DDM (\ref{ddm1}), 
with the radial rate $\mathnormal{r}(\theta_0)$ defined by (\ref{oracle_risk})
and $\Theta_{size} =\ell_2$. 
We will come back to this when proving the main result, Theorem \ref{th6}.

Besides being an ingredient for establishing the confidence 
optimality (\ref{conf_ball_problem}), the above theorem is of its own interest.
The results with local contraction rates are intrinsically adaptive
in the sense that the contraction rate $\mathnormal{r}(I_o,\theta_0)$ is fast 
for ``smooth'' $\theta_0$'s and  slow for ``rough'' ones. 
This is a stronger and more refined property 
than being globally adaptive. 
Let us elucidate the potential strength of local results. 

To characterize the quality of Bayesian procedures, the notion of posterior 
contraction rate was first introduced and studied in \cite{Ghosal&etal:2000}. 
Clearly, it extends directly to DDMs 
and a non-asymptotic version of this notion for DDMs is in fact given by condition (\~A2). 
Typically in the literature, contraction rate is related to the (global) minimax rate 
$R(\Theta_\beta)$ over a certain  set $ \Theta_\beta\ni \theta_0$.
The optimality of Bayesian procedures is then understood in the sense of adaptive minimax
posterior convergence rate: given a prior (knowledge of $\beta$ is 
not used in the prior), the resulting posterior contracts, 
from the $\mathrm{P}_{\theta_0}$-perspective, to the ``true'' $\theta_0\in\Theta_\beta$ 
with the minimax rate $R(\Theta_\beta)$. 

For a scale $\Theta(\mathcal{B})=\{\Theta_\beta, \, \beta\in\mathcal{B}\}$, let
$\{R(\Theta_\beta), \, \beta \in \mathcal{B}\}$ be  the family of the pertaining 
minimax rates. Suppose (\ref{local_impl_global}) is fulfilled for the local 
rate $\mathnormal{r}(\theta_0)$ defined by (\ref{oracle_risk}) and  
$\{R(\Theta_\beta), \, \beta \in \mathcal{B}\}$. Then, in view of (\ref{local_impl_global}), 
Theorem \ref{th1} entails that the DDM $\mathrm{P}(\cdot|X)$ (\ref{ddm1})
must also contract to $\theta_0$ with (at least) the minimax rate  $R_\varepsilon(\Theta_\beta)$ 
uniformly in $\theta_0\in\Theta_\beta$ for each $\beta\in\mathcal{B}$. 
Thus, the adaptive 
(over the scale $\Theta(\mathcal{B})$) 
minimax contraction rate result for $\mathrm{P}(\cdot|X)$ follows immediately. 
Foremost, Theorem \ref{th1} implies adaptive minimax results 
\emph{simultaneously for all scales} 
for which (\ref{local_impl_global}) is fulfilled.
In particular, (\ref{local_impl_global}) is satisfied for 
the following scales: Sobolev and analytic ellipsoids, Sobolev hyperrectangles 
(in fact, rather general $\ell_2$-ellipsoids and hyperrectangles, considered below), 
certain scales of Besov classes  and $\ell_p$-bodies, tail classes. 
See Supplement for details where we also consider the situation when the 
local oracle results over one family of rates imply the local oracle results over 
another family of rates. 


For example, consider general ellipsoids  and hyperrectangles 
\begin{align}
\label{ellipsoids}
\mathcal{E}(a) = \big\{ \theta \in \ell_2: \, \sum\nolimits_i (\tfrac{\theta_i}{a_i})^2 \le 1\big\}, \;
\mathcal{H}(a)= \{ \theta \in \ell_2: \, |\theta_i|\le a_i, \, i \in \mathbb{N}\},
\end{align}
where $a=(a_i, i \in \mathbb{N})$ is nonincreasing  sequence of numbers
in $[0,+\infty]$ which converge to 0 as $i\to \infty$, $a_1 \ge c_1\varepsilon$ for some $c_1>0$. 
Here we adopt the conventions $0/0=0$ and $x/(+\infty)=0$ for $x \in \mathbb{R}$.
Let $R^2(\Theta) = \inf_{\hat{\theta}}\sup_{\theta \in \Theta} 
\mathrm{E}_\theta \|\hat{\theta}-\theta\|^2$ denote the (quadratic) minimax risk 
over a set $\Theta$, where the infimum is taken over all possible estimators 
$\hat{\theta}=\hat{\theta}(X)$, measurable functions of the data $X$. 
One can show (see Supplement) that 
\begin{align}
\label{loc<global}
\sup_{\theta_0 \in \mathcal{E}(a)} \mathnormal{r}^2(\theta_0)\le (2\pi)^2 R^2(\mathcal{E}(a)), 
\;\; 
\sup_{\theta_0 \in \mathcal{H}(a)}\mathnormal{r}^2(\theta_0)\le \tfrac{5}{2} R^2(\mathcal{H}(a)).
\end{align}
Instead of $(2\pi)^2$, one can put a tighter constant $4.44$ in the direct case, 
which possibly holds for the ill-posed case as well; see Supplement. 
Then Theorem \ref{th1} implies  that
\[
\sup_{\theta_0\in \Theta(a)} 
\mathrm{E}_{\theta_0} \mathrm{P}\big(\|\theta-\theta_0\|
\ge M R(\Theta(a)) \big| X\big) \le \frac{C}{M^2}, \quad \text{for some} \;\;
C=C(K,\alpha),
\]
where $\Theta(a)$ is either $\mathcal{E}(a)$ or $\mathcal{H}(a)$ for some \emph{unknown} $a$.
In particular, we obtain the minimax contraction rates for the four 
scales considered in \cite{Szabo&etal:2015}:
two families of ellipsoids (Sobolev and analytic) and two families of
hyperrectangles (Sobolev and parametric); see Supplement.

It turns out that the DDM $\mathrm{P}(\cdot|X)$ defined by (\ref{ddm1}) can also be used for estimating 
the parameter $\theta_0$. Namely, define the estimator
\begin{equation}
\label{tilde_estimator}
\tilde{\theta}= \mathrm{E}(\theta|X) = 
\sum\nolimits_I X(I) \mathrm{P}(\mathcal{I}=I|X), \;\;  
X(I)=(X_i\mathrm{1}\{i\le I\}, i\in\mathbb{N}),
\end{equation}
which is just the DDM $\mathrm{P}(\cdot|X)$-expectation. 
This estimator satisfies the following oracle estimation inequality.
\begin{theorem}[Oracle inequality]
\label{th2}
Let the conditions of Theorem \ref{th1} be fulfilled,
$\theta_0\in\ell_2$, and $\tilde{\theta}$  be defined by (\ref{tilde_estimator}).
Then there exist  a constant $C_{est}=C_{est}(K,\alpha)\ge 1$
such that 
$\mathrm{E}_{\theta_0} \|\tilde{\theta}-\theta_0\|^2
\leq C_{est}\mathnormal{r}^2(\theta_0)$, where
the oracle rate $\mathnormal{r}(\theta)$ is  defined by (\ref{oracle_risk}).
\end{theorem}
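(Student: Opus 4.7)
The strategy is to reduce the squared estimation error to a conditional second moment of the DDM via Jensen's inequality, and then invoke the second-moment estimate that already powers Theorem \ref{th1} via Markov's inequality.

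First, since $\tilde{\theta} = \mathrm{E}(\theta|X)$ and $\|\cdot\|^2$ is convex, the conditional Jensen inequality applied under the DDM $\mathrm{P}(\cdot|X)$ gives
\[
\|\tilde{\theta}-\theta_0\|^2 \;=\; \big\|\mathrm{E}(\theta-\theta_0\,|\,X)\big\|^2 \;\le\; \mathrm{E}\!\left(\|\theta-\theta_0\|^2\,\big|\,X\right),
\]
so that $\mathrm{E}_{\theta_0}\|\tilde{\theta}-\theta_0\|^2 \le \mathrm{E}_{\theta_0}\mathrm{E}(\|\theta-\theta_0\|^2|X)$. Expanding this right-hand side through the mixture representation (\ref{ddm1})--(\ref{measure_P_I}) and using that under $\mathrm{P}_I(\cdot|X)$ the coordinates are independent with $\theta_i \sim N(X_i\mathbf{1}\{i\le I\},L\sigma_i^2\mathbf{1}\{i\le I\})$, one obtains the identity $\mathrm{E}_I(\|\theta-\theta_0\|^2|X) = \|X(I)-\theta_0\|^2 + L\sum_{i\le I}\sigma_i^2$, and hence
\[
\mathrm{E}_{\theta_0}\|\tilde{\theta}-\theta_0\|^2 \;\le\; \mathrm{E}_{\theta_0}\sum_I \Big[\|X(I)-\theta_0\|^2 + L\sum_{i\le I}\sigma_i^2\Big]\mathrm{P}(\mathcal{I}=I|X).
\]

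The key observation is that the right-hand side is exactly the quantity that must already have been controlled en route to Theorem \ref{th1}. Indeed, the tail statement in Theorem \ref{th1} is a $1/M^2$ bound, which is the hallmark of a Markov-type argument applied to $\mathrm{E}_{\theta_0}\mathrm{E}(\|\theta-\theta_0\|^2|X)$: the conditional Markov inequality gives $\mathrm{P}(\|\theta-\theta_0\|\ge M\mathnormal{r}(\theta_0)|X) \le \mathrm{E}(\|\theta-\theta_0\|^2|X)/(M^2\mathnormal{r}^2(\theta_0))$, and averaging under $\mathrm{P}_{\theta_0}$ shows that Theorem \ref{th1} is equivalent to the bound $\mathrm{E}_{\theta_0}\mathrm{E}(\|\theta-\theta_0\|^2|X) \le C\mathnormal{r}^2(\theta_0)$ for some $C = C(K,\alpha)$. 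Combining this with the Jensen step above yields Theorem \ref{th2} with $C_{est}$ essentially equal to $C_{or}$ (or a fixed multiple thereof).

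The substantive work therefore lies in the second-moment estimate underlying Theorem \ref{th1}, and this is where the main obstacle sits: one must show that $\sum_I \mathrm{E}_{\theta_0}\!\big[(\|X(I)-\theta_0\|^2 + L\sum_{i\le I}\sigma_i^2)\mathrm{P}(\mathcal{I}=I|X)\big] \lesssim \mathnormal{r}^2(\theta_0)$, and this requires genuine concentration of the random mixture weights $\mathrm{P}(\mathcal{I}=I|X)$ near the oracle $I_o$. The concentration must be strong enough to dominate the unboundedly growing factor $\|X(I)-\theta_0\|^2$ as $I \to \infty$; pointwise tail control alone is insufficient. This is achieved by exploiting the exponentially decreasing prior weights $\lambda_I = C_\alpha e^{-\alpha I}$ together with careful estimates on the ratios of Gaussian marginal likelihoods in (\ref{p(I|X)}), and the conditions (\ref{condition_sigma}) on $\sigma_i^2$ that force $\mathnormal{r}^2(I,\theta_0)$ to behave comparably to $\mathnormal{r}^2(\theta_0)$ on a neighborhood of $I_o$ while growing controllably outside it. Once that machinery (developed in Section \ref{sec_proofs} for Theorem \ref{th1}) is available, the proof of Theorem \ref{th2} is a short two-line application of Jensen's inequality as above.
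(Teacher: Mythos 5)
Your proposal is essentially the paper's own argument: the paper bounds $\mathrm{E}_{\theta_0}\|\tilde{\theta}-\theta_0\|^2$ by Cauchy--Schwarz/Jensen applied to the mixture $\tilde{\theta}=\sum_I X(I)\,\mathrm{P}(\mathcal{I}=I|X)$, obtaining $\mathrm{E}_{\theta_0}\sum_I\|X(I)-\theta_0\|^2\mathrm{P}(\mathcal{I}=I|X)$, which is then recognized as (at most) $M^2\mathnormal{r}^2(\theta_0)\,\mathrm{E}_{\theta_0}(T_1+T_2+T_3)\le C_{or}\mathnormal{r}^2(\theta_0)$ using the estimates already established in Step 6 of the proof of Theorem \ref{th1}; your Jensen step with the extra variance term $L\sum_{i\le I}\sigma_i^2$ leads to exactly the same quantity $\upsilon_I$, so the two routes coincide.

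One claim in your write-up should be corrected, though it does not sink the proof as you actually organize it: the statement of Theorem \ref{th1} is \emph{not} equivalent to the second-moment bound $\mathrm{E}_{\theta_0}\mathrm{E}(\|\theta-\theta_0\|^2|X)\le C\mathnormal{r}^2(\theta_0)$. A tail bound of order $M^{-2}$, uniform in $M$, is exactly at the borderline where integration of the tail diverges (logarithmically), so the theorem's conclusion alone does not yield any second-moment control; the implication only runs in the other direction, via Markov. What makes the argument work is the \emph{intermediate} inequality inside the proof of Theorem \ref{th1}, namely $\mathrm{E}_{\theta_0}\sum_I\upsilon_I p_I\le C_{or}/M^2$ with $\upsilon_I=\big(\mathnormal{r}^2(I,\theta_0)+\sum_{i\le I}\sigma_i^2\xi_i^2\big)/(M^2\mathnormal{r}^2(\theta_0))$, which, since $M^2\upsilon_I$ is free of $M$, is literally the needed second-moment bound. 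You do in fact defer to this machinery (``the quantity that must already have been controlled en route to Theorem \ref{th1}''), which is the correct citation; just drop the ``equivalence'' framing, since as a logical deduction from the theorem's statement it is false.
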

This theorem yields the whole panorama of the minimax adaptive 
estimation results in the mildly ill-posed inverse setting, simultaneously over all scales 
for which (\ref{local_impl_global}) is fulfilled. The proof of this theorem 
is essentially contained in the proof of Theorem \ref{th1}, 
but it is still provided in Supplement. Similar result has been obtained in
\cite{Cavalier&Golubev:2006} for the estimator based on the risk hull minimization 
method. In that paper, the oracle rate has an extra penalty term
but the multiplicative constant is very tight.

\subsection{Local DDM-contraction rate: lower bound under EBR}  
\label{lebr}
 
As we mentioned in the introduction, 
in general it is impossible to construct optimal (fully) adaptive confidence set 
in the minimax sense with a prescribed high coverage probability.
Actually, this is a genuine problem, not connected with an optimality framework used: 
global minimax 
(\ref{adapt_conf_ball_problem}),  or local (\ref{conf_ball_problem}). 
Clearly, the same problem should occur for the  local approach (\ref{conf_ball_problem}), 
because otherwise we would have solved the minimax version of the problem as well. 
The intuition 
is  that there are so called ``deceptive''  parameters $\theta_0$ 
that ``trick'' the DDM $\mathrm{P}(\cdot|X)$ in the sense that the 
random radius $\hat{r}$ defined by (\ref{radius}) is overoptimistic, i.e.,
of a smaller order than the actual radial rate  $\mathnormal{r}(\theta_0)$. 
The coverage probability is then too small. 

A way to fix this problem is to remove a set (preferably, minimal) of deceptive 
parameters  from the set $\Theta$ (in our case $\ell_2$) and derive the coverage 
relation in (\ref{conf_ball_problem}) for the remaining set of non-deceptive parameters. 
In a different framework, \cite{Picard&Tribouley:2000} 
introduced such a set, the so called \emph{self-similar} (SS) parameters, 
studied later by many authors in various settings and models.  
A somewhat restrictive feature of  the self-similarity property is that
it is linked to the Sobolev (Besov) smoothness scale.  
In \cite{Szabo&etal:2015} a more general condition is introduced
that is not linked to a particular smoothness scale, 
the \emph{polished tail} (PT) condition: for some $L_0>0$ ($L_0\ge 1$ for $\Theta_{pt}$ 
to be not empty), $N_0\in\mathbb{N}$ and $\rho_0\ge 2$,
\[
\Theta_{pt}=\Theta_{pt}(L_0,N_0,\rho_0)=\Big\{\theta \in \ell_2:\, 
\sum_{i=N}^\infty \theta_i^2 \le L_0 \sum_{i=N}^{\rho_0 N} \theta_i^2, 
\; \forall N\ge N_0 \Big\}.
\]
In \cite{Szabo&etal:2015} it is shown that  
$\Theta_{ss} \subseteq \Theta_{pt}$, i.e., PT is more general than SS.

Introduce the \emph{surrogate oracle rate} 
$\mathnormal{r}(\bar{I}_o,\theta_0)$, with the   
\emph{surrogate oracle} $\bar{I}_o$ defined as follows:
\begin{align}
\label{surr_oracle}
\bar{I}_o =\arg\!\min_I \mathnormal{R}^2(I,\theta_0), \quad
\mathnormal{R}^2(I,\theta_0)=\mathnormal{R}_\sigma^2(I,\theta_0) = I\varepsilon^2+ 
\sum_{i>I} \frac{\theta_{0,i}^2}{\kappa_i^2}. 
\end{align}
The quantity $R(I,\theta_0)$ is nothing else but the oracle rate for the parameter 
$\bar{\theta}=(\theta_i/\kappa_i,\, i\in\mathbb{N})$ in the ``direct'' model 
$\tilde{X}= (X_i/\kappa_i, \, i \in \mathbb{N})\sim\bigotimes_i N(\bar{\theta}_i,\varepsilon^2)$.
Note that $\bar{I}_o=I_o$ in the direct case $\kappa^2_i=1$.

Introduce the \emph{excessive bias restriction} (EBR): 
$\theta_0\in \Theta_{eb}(\tau)$  for $\tau>0$,
\[
\Theta_{eb}=\Theta_{eb}(\tau)=\Theta_{eb}(\tau,\varepsilon)
=\Big\{\theta \in \ell_2:\,\sum_{i >\bar{I}_o}
\theta_i^2 \le \tau  \sum_{i\le \bar{I}_o} \sigma_i^2\Big\},
\]
where $\bar{I}_o=\bar{I}_o(\theta)$ is defined by (\ref{surr_oracle}).
Note that in principle $\Theta_{eb}$ also depends on $\varepsilon$ as 
we consider the non-asymptotic setting. For asymptotic 
considerations (as $\varepsilon \to 0$), we can introduce a uniform (in $\varepsilon$) 
version of EBR:
\[
\bar{\Theta}_{eb}(\tau, \varepsilon_0) = \big\{\theta \in \Theta_{eb}(\tau,\varepsilon)
 \; \text{ for all } \varepsilon \in (0, \varepsilon_0]\big\} = 
 \cap_{\varepsilon \in (0, \varepsilon_0]}\Theta_{eb}(\tau,\varepsilon).
\]
We  will not consider $\bar{\Theta}_{eb}(\tau, \varepsilon_0)$ 
and $\Theta_{eb}(\tau,\varepsilon)$ separately  and will always 
use the latter notation  $\Theta_{eb}(\tau)$ for both in what follows, 
with the understanding that whenever one needs the uniform version, 
one can think of  $\Theta_{eb}(\tau)$ as $\bar{\Theta}_{eb}(\tau, \varepsilon_0)$,
as all assertions below hold also for the uniform version of EBR.


Let us show that EBR is less restrictive than PT, i.e., 
for any $L_0\ge 1$, $N_0\in\mathbb{N}$ and $\rho_0\ge 2$, 
there exists a $\tau>0$ such that 
$\Theta_{pt}(L_0,N_0,\rho_0) \subseteq \Theta_{eb}(\tau)$.  
From (\ref{surr_oracle}), it follows that for  any $I>\bar{I}_o$, 
$\sum_{i=\bar{I}_o+1}^{I} \frac{\theta_i^2}{\sigma_i^2} \le I-\bar{I}_o$.  
Besides, by condition (i) in (\ref{condition_sigma}),
$(n-l) \sigma_n^2 \le K_1 \sum_{i=l+1}^n \sigma_i^2$ 
(indeed, $\sigma$ is non-decreasing and $K_1\le 1$) 
for all $n,l \in \mathbb{N}$ such that $n> l$.
Using the last two relations and the property (ii) 
from (\ref{condition_sigma}), we obtain for any
$\theta\in \Theta_{pt}(L_0,N_0,\rho_0)$ that
\begin{align*}
\sum_{i=\bar{I}_o+1}^\infty  \theta_i^2 &=
\sum_{i=\bar{I}_o+1}^{N_0\bar{I}_o -1}  \theta_i^2 
+\sum_{i=N_0\bar{I}_o}^\infty \theta_i^2 \le  
\sum_{i=\bar{I}_o+1}^{N_0\bar{I}_o -1}  \theta_i^2+
L_0 \sum_{i=N_0\bar{I}_o}^{\rho_0 N_0 \bar{I}_o} \theta_i^2 \\
&\le L_0  \sigma_{\rho_0 N_0\bar{I}_o}^2 
\sum_{i=\bar{I}_o+1}^{\rho_0 N_0 \bar{I}_o} \frac{\theta_i^2}{\sigma_i^2}
\le  L_0  \sigma_{\rho_0 N_0\bar{I}_o}^2 (\rho_0 N_0 \bar{I}_o-\bar{I}_o) \\
&\le L_0 K_1 \sum_{i=\bar{I}_o+1}^{\rho_0 N_0 \bar{I}_o}  \sigma_i^2 
\le L_0 K_1 \sum_{i=1}^{\rho_0 N_0 \bar{I}_o}  \sigma_i^2 
\le L_0 K_1 K_2(\rho_0 N_0) \sum_{i=1}^{\bar{I}_o}  \sigma_i^2 ,
\end{align*}
so that $\Theta_{pt}(L_0,N_0,\rho_0)
\subseteq \Theta_{eb} (L_0 K_1 K_2(\rho_0 N_0))$ for any $N_0\ge 1$.

Summarizing the relations between three types of conditions describing 
non-deceptive parameters introduced above,
$
\Theta_{ss}\subseteq \Theta_{pt} \subseteq \Theta_{eb}.
$
Thus EBR is the most general condition among these three. 
As to the question how big (or ``typical'') that set $\Theta_{eb}$ 
is, \cite{Szabo&etal:2015} gives three types of arguments for the 
PT-parameters: topological, minimax and Bayesian. Since
$\Theta_{eb} \supseteq \Theta_{pt}$, the same arguments  certainly 
apply to $\Theta_{eb}$; 
see \cite{Szabo&etal:2015} for more details on this.


Now we are ready to formulate the lower bound result for the 
DDM-contraction rate.
\begin{theorem}[Small ball DDM-probability]
\label{th2a_posterior}
Let the DDM $\mathrm{P}(\cdot|X)=\mathrm{P}_{K,\alpha}(\cdot|X)$ 
be given by (\ref{ddm1}),
with parameters $K,\alpha>0$ such that 
\begin{equation}
\label{rho}
\alpha < a(K)\triangleq\frac{1}{4} -\frac{1}{2} \log\Big(\frac{K+1}{2}\Big). 
\end{equation}
Then there exists $C_{sb}=C_{sb}(K,\alpha)>0$  such that,
for any $\theta_0\in \ell_2$, any DD-center $\hat{\theta}=\hat{\theta}(X)$ and 
any $\delta\in(0,\delta_{sb}]$ with $\delta_{sb}=1 \wedge 
\Big(\sqrt{\frac{K(2p+1)}{K+1}}\big(\frac{a(K)-\alpha}{4ea(K)}\big)^{p+\frac{1}{2}}\Big)$,
\[
\mathrm{E}_{\theta_0} \mathrm{P} \big(\|\theta- \hat{\theta}\|
\le \delta \Sigma^{1/2}(\bar{I}_o)| X\big) \le 
C_{sb}\delta \big[\log(\delta^{-1})\big]^{p+1/2},
\]
where $\Sigma(\bar{I}_o) =\sum_{i\le \bar{I}_o} \sigma_i^2$ and  
$\bar{I}_o=\bar{I}_o(\theta_0)$ is defined by (\ref{surr_oracle}).
\end{theorem}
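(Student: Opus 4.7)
The plan is: (i) apply Anderson's inequality to reduce the small-ball probability under each Gaussian component $\mathrm{P}_I(\cdot|X)$ to a deterministic chi-square-type tail; (ii) control the expected empirical Bayes weights $\mathrm{E}_{\theta_0}\mathrm{P}(\mathcal{I}=I|X)$ on small indices $I$ using the explicit formula (\ref{p(I|X)}) and the optimality of the surrogate oracle $\bar I_o$; (iii) split the sum over $I$ at a threshold depending on $\bar I_o$ and $\log\delta^{-1}$ to balance the two estimates.

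\emph{Anderson reduction and expected weights.} Since $\mathrm{P}_I(\cdot|X)$ is a Gaussian translate centered at $X(I)$ with diagonal covariance $L\sigma_i^2\mathbf{1}\{i\le I\}$, Anderson's inequality yields, for any $\hat\theta=\hat\theta(X)$,
\[
\mathrm{P}_I(\|\theta-\hat\theta\|\le r|X)\le \mathrm{P}\bigl(\textstyle\sum_{i\le I}L\sigma_i^2 Z_i^2\le r^2\bigr)=:\bar q_I,
\]
a deterministic quantity (independent of $X$ and $\hat\theta$). Next, from (\ref{p(I|X)})--(\ref{tau_i}), $p_I=\mathrm{P}(\mathcal{I}=I|X)\propto \lambda_I\prod_{i\le I}(1+K/\kappa_i^2)^{-1/2}\exp(-\tfrac12\sum_{i>I}X_i^2/\sigma_i^2)$. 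The crude normalization bound $p_I\le p_I/p_{\bar I_o}$, combined with $(1+K/\kappa_i^2)\le K+1$ (using $\kappa_i\ge 1$), gives for $I\le \bar I_o$
\[
p_I\le \bigl(e^{\alpha}\sqrt{K+1}\bigr)^{\bar I_o-I}\exp\bigl(-\tfrac12\textstyle\sum_{I<i\le\bar I_o}X_i^2/\sigma_i^2\bigr).
\]
Taking $\mathrm{E}_{\theta_0}$, using the elementary Gaussian identity $\mathrm{E}_{\theta_0}[e^{-X_i^2/(2\sigma_i^2)}]=2^{-1/2}e^{-\theta_{0,i}^2/(4\sigma_i^2)}$, and the surrogate-oracle inequality $\sum_{I<i\le\bar I_o}\theta_{0,i}^2/\sigma_i^2\ge \bar I_o-I$ (which follows from $R^2(I,\theta_0)\ge R^2(\bar I_o,\theta_0)$), I obtain
\[
\mathrm{E}_{\theta_0}[p_I]\le \mu^{\bar I_o-I},\quad \mu:=e^{\alpha}\sqrt{(K+1)/2}\,e^{-1/4},
\]
and the hypothesis $\alpha<a(K)$ is precisely $\mu<1$.

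\emph{Splitting, balance, and main obstacle.} Since $\bar q_I$ is deterministic, $\mathrm{E}_{\theta_0}\mathrm{P}(\|\theta-\hat\theta\|\le r|X)=\sum_I\mathrm{E}_{\theta_0}[p_I]\bar q_I\le \sum_{I<I_*}\mathrm{E}_{\theta_0}[p_I]+\sup_{I\ge I_*}\bar q_I$. With $r=\delta\Sigma^{1/2}(\bar I_o)$ and $I_*=1\vee\lfloor\bar I_o-c\log\delta^{-1}\rfloor$ (for a suitable $c\sim 1/\log(1/\mu)$), the first sum is $O(\delta)$ by the geometric bound just established. For the second, a Chernoff estimate for the weighted chi-square together with the mildly ill-posed formula $\Sigma(\bar I_o)\asymp \varepsilon^2\bar I_o^{2p+1}$ yields two sub-regimes: if $\bar I_o\gtrsim \log\delta^{-1}$, then $I_*\gg 1$ and $\sup_{I\ge I_*}\bar q_I\lesssim (C\delta)^{I_*}\le C\delta$; if $\bar I_o\lesssim \log\delta^{-1}$, then $I_*=1$ and the product-of-marginals bound $\bar q_1\le \delta\sqrt{2\Sigma(\bar I_o)/(\pi L\varepsilon^2)}\lesssim \delta\,\bar I_o^{p+1/2}$ produces $C\delta[\log\delta^{-1}]^{p+1/2}$, which is exactly where the $(p+1/2)$-power of the logarithm emerges. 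The main obstacle is tracking the absolute constants to pin down $\delta_{sb}$: the restriction $\delta\le\delta_{sb}$ is needed to keep the Chernoff bound in its sub-probability regime uniformly over $I\ge I_*$, and the margin $a(K)-\alpha$ governs both $\mu$ and the admissible threshold constant $c$; the delicate balance occurs at the crossover $\bar I_o\sim c\log\delta^{-1}$, where both pieces must simultaneously be $O(\delta[\log\delta^{-1}]^{p+1/2})$.
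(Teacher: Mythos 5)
Your proposal is correct and follows essentially the same route as the paper: Anderson's inequality reduces each $\mathrm{P}_I(\cdot|X)$ small-ball probability to a deterministic weighted chi-square estimate of the form $\big[(2e\bar I_o/I)^{p+1/2}\delta/\sqrt{L(2p+1)}\big]^I$, the surrogate-oracle inequality $\sum_{I<i\le\bar I_o}\theta_{0,i}^2/\sigma_i^2\ge\bar I_o-I$ yields the geometric decay $\mathrm{E}_{\theta_0}p_I\le e^{-(a(K)-\alpha)(\bar I_o-I)}$ (your $\mu^{\bar I_o-I}$ is exactly the paper's Lemma 1 in per-index form, with $\mu<1$ iff $\alpha<a(K)$), and the crossover $\bar I_o\asymp\log\delta^{-1}$ is precisely where the $(p+1/2)$-power of the logarithm emerges, as in the paper's two-case analysis. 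The only deviations are cosmetic: you bound the weighted chi-square small ball by a Chernoff argument where the paper uses its Lebesgue-volume/Stirling lemma (both give the same key bracket up to polynomial prefactors), and you split additively at $I_*=\bar I_o-c\log\delta^{-1}$ rather than multiplicatively at $\varkappa\bar I_o$ with a case distinction on $e^{-\varrho\bar I_o/2}$ versus $\delta$.
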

Notice that the effective rate in the above lower bound is 
determined by the variance term  
of the oracle surrogate rate $\mathnormal{r}^2(\bar{I}_o,\theta_0)$.
Recall that EBR says basically that the variance term
is the main term in the surrogate oracle rate $\mathnormal{r}^2(\bar{I}_o,\theta_0)$.
Under $\theta_0\in \Theta_{eb}(\tau)$, we thus have  
$\mathnormal{r}^2(\theta_0) =\mathnormal{r}^2(I_o,\theta_0) \le \mathnormal{r}^2(\bar{I}_o,\theta_0) \le (1+\tau) \Sigma(\bar{I}_o)$.
This yields the following corollary.
\begin{corollary}[
Lower bound under EBR] 
\label{cor_lower_bound}
Let the conditions of Theorem \ref{th2a_posterior} be satisfied.
Then, 
for any $\theta_0\in \ell_2$, any DD-center $\hat{\theta}=\hat{\theta}(X)$, any $\tau>0$ 
and any $\delta\in(0,\delta_{eb}]$ with $\delta_{eb} = (1+\tau)^{-1/2} \delta_{sb}$,
\[
\sup_{\theta_0\in \Theta_{eb}(\tau)} 
\mathrm{E}_{\theta_0} \mathrm{P} \big(\|\theta- \hat{\theta}\|\le 
\delta \mathnormal{r}(\theta_0)| X\big) \le C_{eb}\delta \big[\log(\delta^{-1})\big]^{p+1/2},
\]
where $C_{eb}=
C_{sb} \sqrt{1+\tau}$, $\delta_{sb}$ and $C_{sb}$ are from Theorem \ref{th2a_posterior}.
\end{corollary}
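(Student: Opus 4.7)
The plan is to deduce the corollary directly from Theorem \ref{th2a_posterior} by replacing the local rate $\mathnormal{r}(\theta_0)$ with the variance term $\Sigma^{1/2}(\bar{I}_o)$ on which Theorem \ref{th2a_posterior} is phrased. Under the EBR the bias part of the surrogate oracle rate is by definition dominated by its variance part, so up to a factor $\sqrt{1+\tau}$ these two rates agree; this extra factor is absorbed by rescaling $\delta$ to $\delta'=\delta\sqrt{1+\tau}$.

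Concretely, I would first chain
\begin{equation*}
\mathnormal{r}^2(\theta_0) = \min_{I\in\mathbb{N}} \mathnormal{r}^2(I,\theta_0) \le \mathnormal{r}^2(\bar{I}_o,\theta_0) = \Sigma(\bar{I}_o) + \sum_{i>\bar{I}_o} \theta_{0,i}^2 \le (1+\tau)\Sigma(\bar{I}_o),
\end{equation*}
where the first inequality comes from the definition of the oracle $I_o$ in (\ref{oracle_risk}), the middle identity is just the definition of $\mathnormal{r}^2(I,\theta_0)$ in (\ref{local_risks}), and the final step is precisely the EBR condition $\theta_0\in\Theta_{eb}(\tau)$. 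Taking square roots yields $\mathnormal{r}(\theta_0)\le\sqrt{1+\tau}\,\Sigma^{1/2}(\bar{I}_o)$, so that pointwise on $\mathcal{X}$,
\begin{equation*}
\{\theta:\,\|\theta-\hat{\theta}\|\le \delta\mathnormal{r}(\theta_0)\}\subseteq\{\theta:\,\|\theta-\hat{\theta}\|\le \delta'\Sigma^{1/2}(\bar{I}_o)\},\quad \delta':=\delta\sqrt{1+\tau}.
\end{equation*}
The definition $\delta_{eb}=(1+\tau)^{-1/2}\delta_{sb}$ is tailored exactly so that $\delta\le\delta_{eb}$ is equivalent to $\delta'\le\delta_{sb}$, placing us in the range of validity of Theorem \ref{th2a_posterior}.

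Applying Theorem \ref{th2a_posterior} with $\delta'$ in place of $\delta$ and monotonicity of the DDM-probability yields
\begin{equation*}
\mathrm{E}_{\theta_0}\mathrm{P}\big(\|\theta-\hat{\theta}\|\le\delta\mathnormal{r}(\theta_0)\big|X\big) \le \mathrm{E}_{\theta_0}\mathrm{P}\big(\|\theta-\hat{\theta}\|\le\delta'\Sigma^{1/2}(\bar{I}_o)\big|X\big) \le C_{sb}\delta'\big[\log((\delta')^{-1})\big]^{p+1/2}.
\end{equation*}
Since $\tau>0$ forces $\sqrt{1+\tau}\ge 1$, one has $\log((\delta')^{-1})=\log(\delta^{-1})-\tfrac{1}{2}\log(1+\tau)\le\log(\delta^{-1})$, hence the right-hand side is bounded by $C_{sb}\sqrt{1+\tau}\,\delta[\log(\delta^{-1})]^{p+1/2}=C_{eb}\delta[\log(\delta^{-1})]^{p+1/2}$ with $C_{eb}=C_{sb}\sqrt{1+\tau}$. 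Since the bound does not depend on $\theta_0$, taking supremum over $\theta_0\in\Theta_{eb}(\tau)$ completes the proof. There is no substantive obstacle here: the corollary is essentially a change-of-variable repackaging of Theorem \ref{th2a_posterior} once EBR has been used to trade the bias term for a multiplicative constant.
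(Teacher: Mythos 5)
Your proof is correct and follows essentially the same route as the paper: under EBR one has $\mathnormal{r}^2(\theta_0)\le \mathnormal{r}^2(\bar{I}_o,\theta_0)\le(1+\tau)\Sigma(\bar{I}_o)$, so the corollary is Theorem \ref{th2a_posterior} applied at $\delta'=\delta\sqrt{1+\tau}\le\delta_{sb}$, with the logarithmic factor handled by monotonicity exactly as you do. No gaps.
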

A bound (not the sharpest) for the constant $C_{sb}$ can be found in the proof of 
Theorem \ref{th2a_posterior}. The above assertion implies condition (\~A2)  
for the DDM  $\mathrm{P}(\cdot|X)$ satisfying the conditions of 
Theorem \ref{th2a_posterior}, with $\psi(\delta) =C_{eb}\delta\big[\log(\delta^{-1})\big]^{p+\frac{1}{2}}$,
uniformly in $\theta_0\in\Theta_{eb}(\tau)$.
This ensures the coverage relation in (\ref{conf_ball_problem}), see the next subsection.

\subsection{The main result: confidence ball under EBR}

In this subsection we establish the main result of the paper.
Let the DDM $\mathrm{P}(\cdot|X)
=\mathrm{P}_{K,\alpha}(\cdot|X)$ be given by (\ref{ddm1}),
with constants $K,\alpha>0$ such that $K > 1.87$ and condition (\ref{rho}) is fulfilled. 
By using the DDM  $\mathrm{P}(\cdot|X)$, we construct the default DD-center
$\check{\theta}$ (\ref{hat_theta})
and the default confidence ball  $\tilde{B}=B(\check{\theta}, M\hat{r}_\kappa)$ 
given by (\ref{def_ball}), with some fixed $\kappa \in (0,1)$, say $\kappa=\frac{1}{2}$. 
Theorem \ref{th1} implies (\~A1) with 
$\varphi(M)=\frac{C_{or}}{M^2}$. 
Then by Proposition \ref{prop1},  
(A1) and (A3) are also fulfilled for the DDM $\mathrm{P}(\cdot|X)$ and 
the default DD-center $\check{\theta}$, 
with (see Remark \ref{rem_def_ball}) $\phi_1(M)= \frac{42C_{or}}{M^2}$, 
$\phi_2(M)=\frac{10C_{or}}{M^2}$, uniformly in $\theta_0\in\ell_2$.

Let us bound the coverage probability of the default confidence ball 
$B(\check{\theta}, M\hat{r}_\kappa)$. In view of Corollary \ref{cor_lower_bound}, 
we conclude that condition (\~A2) is met with 
$\psi(\delta) = C_{eb}\delta \big[\log(\delta^{-1})\big]^{p+1/2}$, 
uniformly in $\theta_0\in \Theta_{eb}(\tau)$. 
As also (A3) is fulfilled  with $\phi_2(M)=\frac{10C_{or}}{M^2}$ uniformly in $\theta_0\in \ell_2\supseteq \Theta_{eb}$,
by applying Proposition \ref{prop2a} we derive that,
for each $\theta_0\in \Theta_{eb}(\tau)$,
\[
\mathrm{P}_{\theta_0} \big(\theta_0 \not \in
B(\check{\theta},M\hat{r}_\kappa)\big)\le
\phi_2(M \delta)+ \frac{\psi(\delta)}{1-\kappa} = 
\frac{10C_{or}}{M^2\delta^2} +
\frac{C_{eb}\delta \big[\log(\delta^{-1})\big]^{p+\frac{1}{2}}}{1-\kappa}
\]
for any $M,\delta>0$.
For $\alpha_1\in(0,1)$ and $\delta_{eb}$ defined in Corollary \ref{cor_lower_bound}, 
we take
\[
\delta_1=\max\big\{\delta\in (0,\delta_{eb}]:\,
 C_{eb}(1-\kappa)^{-1} \delta \big[\log(\delta^{-1})\big]^{p+1/2} \le \alpha_1/2\big\}
\]
and $M_1=\min\{M\in\mathbb{N}:\, 10C_{or}/(M\delta_1)^2 \le \alpha_1/2 \}$.
Then, for all $M\ge M_1$,  
\begin{align}
\label{rel_alpha_1}
\sup_{\theta_0\in \Theta_{eb}(\tau)}
\mathrm{P}_{\theta_0} \big(\theta_0 \not \in
B(\check{\theta},M\hat{r}_\kappa)\big)\le \alpha_1.
\end{align}

Now, since condition (A1) is satisfied with $\phi_1(M)= \frac{42C_{or}}{M^2}$, applying 
Proposition  \ref{prop2b}  yields that  
the  size $\hat{r}_\kappa$ of the confidence ball  
$B(\check{\theta},  M\hat{r}_\kappa)$ is of the local radial rate order: 
\[
\mathrm{P}_{\theta_0}\big(\hat{r}_\kappa \ge M
\mathnormal{r}(\theta_0)\big)
\le \frac{\phi_1(M)}{\kappa}= \frac{42C_{or}}{\kappa M^2},
\]
for any $M>0$ and all $\theta_0\in\ell_2$.
For $\alpha_1\in(0,1)$, take 
$M_2=\min\{M\in\mathbb{N}:\, 42C_{or}/(\kappa M^2) \le \alpha_2 \}$.
Then for any $M \ge M_2$ 
\begin{align}
\label{rel_alpha_2}
\sup_{\theta_0\in\ell_2} \mathrm{P}_{\theta_0}\big(\hat{r}_\kappa \ge M
\mathnormal{r}(\theta_0)\big)
\le \alpha_2.
\end{align}
By combining (\ref{rel_alpha_1}) and  (\ref{rel_alpha_2}), we obtain 
the main result of the paper.
\begin{theorem}[Confidence optimality under EBR]
\label{th6}
Let  the DDM $\mathrm{P}(\cdot|X)
=\mathrm{P}_{K,\alpha}(\cdot|X)$ be given by (\ref{ddm1}),
with constants $K,\alpha>0$ such that $K \ge 1.87$ and (\ref{rho}) is fulfilled. 
Further, let $B(\check{\theta},  M\hat{r}_\kappa)$ 
be the default  confidence ball defined by (\ref{def_ball}).
Then for any $\tau>0$ and any $\alpha_1,\alpha_2 \in (0,1)$ there exist 
$C_0=C_0(\alpha_1, \tau)$ and $c_0=c_0(\alpha_2)$ such that, for any
$C \ge C_0$  and $c \ge c_0$, the following relations hold
\[
\sup_{\theta_0\in\Theta_{eb}(\tau)} \mathrm{P}_{\theta_0} 
\big(\theta_0\not\in B(\check{\theta},C\hat{r}_\kappa) \big) 
\le \alpha_1, \quad 
\sup_{\theta_0\in\ell_2} \mathrm{P}_{\theta_0} 
\big( \hat{r}_\kappa \ge c \mathnormal{r}(\theta_0) \big) 
\le \alpha_2,
\]
where the local radial rate $\mathnormal{r}(\theta_0)$ is defined by (\ref{oracle_risk}).
\end{theorem}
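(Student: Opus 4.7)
The plan is to assemble the theorem from the four previously established building blocks: Theorem~\ref{th1} (upper DDM-contraction bound), Proposition~\ref{prop1} (passage to the default DD-center), Corollary~\ref{cor_lower_bound} (small-ball DDM-probability under EBR), and the two general-purpose Propositions~\ref{prop2a}--\ref{prop2b}. Essentially no new computation is needed; the work is to transfer the rates into the unified form of the conditions (A1)--(A3) and (\~A2), and then to choose the inflation constants $C_0,c_0$ optimally.

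First I would invoke Theorem~\ref{th1} to obtain (\~A1) with $\varphi(M)=C_{or}/M^2$ uniformly over $\theta_0\in\ell_2$. Applying Proposition~\ref{prop1} (in the form recorded in Remark~\ref{rem_def_ball}, with $p=2/3$, $\varsigma=1/2$, $a=1/2$) transfers this into (A1) and (A3) for the default DD-center $\check\theta$, with $\phi_1(M)=42C_{or}/M^2$ and $\phi_2(M)=10C_{or}/M^2$, again uniformly over $\ell_2$. Separately, Corollary~\ref{cor_lower_bound} provides (\~A2) with $\psi(\delta)=C_{eb}\delta[\log(\delta^{-1})]^{p+1/2}$ uniformly over $\theta_0\in\Theta_{eb}(\tau)$, valid for $\delta\in(0,\delta_{eb}]$.

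Next, for the size property I would apply Proposition~\ref{prop2b} with the above $\phi_1$, which gives
\[
\mathrm{P}_{\theta_0}(\hat r_\kappa\ge M\mathnormal{r}(\theta_0))\le 42C_{or}/(\kappa M^2)
\]
uniformly over $\ell_2$, and then take $c_0=c_0(\alpha_2)$ large enough to make the right-hand side at most $\alpha_2$. For the coverage property I would apply Proposition~\ref{prop2a}, yielding for $\theta_0\in\Theta_{eb}(\tau)$
\[
\mathrm{P}_{\theta_0}(\theta_0\notin B(\check\theta,M\hat r_\kappa))\le \frac{10C_{or}}{M^2\delta^2}+\frac{C_{eb}\delta[\log(\delta^{-1})]^{p+1/2}}{1-\kappa}.
\]
I would then choose $\delta_1\in(0,\delta_{eb}]$ small enough that the second (lower-bound) term is at most $\alpha_1/2$, and afterwards $C_0=C_0(\alpha_1,\tau)$ large enough that $10C_{or}/(C_0\delta_1)^2\le\alpha_1/2$; monotonicity of both terms then gives the bound $\alpha_1$ for every $C\ge C_0$.

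The only genuinely delicate step is the coverage optimization: the two error contributions compete, and the logarithmic factor in $\psi$ makes $\delta_1$ depend on $\tau,\alpha_1,p$ and the constants $C_{eb},\kappa$. But since we only need existence of $(C_0,c_0)$ rather than sharp values, the tradeoff is handled simply by fixing $\delta=\delta_1$ first (solely to kill the lower-bound term) and then inflating $M$ to absorb $\phi_2$. Once both \eqref{rel_alpha_1}-type and \eqref{rel_alpha_2}-type bounds are in place for all $C\ge C_0$ and $c\ge c_0$, the two displayed inequalities of Theorem~\ref{th6} follow immediately.
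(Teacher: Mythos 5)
Your proposal is correct and follows essentially the same route as the paper: Theorem~\ref{th1} plus Proposition~\ref{prop1} (via Remark~\ref{rem_def_ball}) to get (A1) and (A3) with $\phi_1(M)=42C_{or}/M^2$, $\phi_2(M)=10C_{or}/M^2$ uniformly over $\ell_2$, Corollary~\ref{cor_lower_bound} for (\~A2) over $\Theta_{eb}(\tau)$, and then Propositions~\ref{prop2a} and~\ref{prop2b} with the same choice of first fixing $\delta_1$ to control the small-ball term and then inflating the radius constant. The constants and the order of the optimization match the paper's argument, so nothing is missing.
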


In the proofs of Theorems \ref{th1} and \ref{th2a_posterior}, 
tighter exponential bounds are possible 
(based on the exponential 
bounds for the $\chi^2$-distribution),
which would presumably lead to exponential functions $\varphi$ and  $\psi$ in 
conditions (\~A1) and  (\~A2). We however use simpler bounds obtained 
by the Markov inequality for the sake of a succinct presentation.
Another useful feature of this approach is that it can be extended to 
non-normal DDMs (\ref{measure_P_I}); see Supplement.
 
\subsection{Concluding remarks} 
\label{szabo}


\paragraph{Range for constant $K$}
The condition $\alpha<a(K)$ ((\ref{rho}) in Theorem \ref{th2a_posterior}) 
limits room for choosing constants $K,\alpha >0$, because $a(K)>0$ only for 
$K \in (0,2e^{1/2}-1)$. One can choose, for example, $K=2$ and 
$\alpha=0.04$. 
Even less room for $K$ remains if we also want Theorem \ref{th1} to hold 
(and this is needed for the main result, Theorem \ref{th6}). Indeed,  
then $K$ has also to satisfy $K\ge 1.87$, so that the final range of allowable $K$'s  
becomes $K \in [1.87, 2.29] \subset [1.87,2e^{1/2}-1)$.
The conditions $K\ge 1.87$ and $\alpha<a(K)$  
are apparently more strict than needed for the corresponding theorems to hold, 
since of course not the most accurate bounds are used in the proof. 


\paragraph{Alternative DD-center and confidence ball}
In Theorem \ref{th6}, instead of the default DD-center $\check{\theta}$ we can 
use the estimator $\tilde{\theta}$ defined by  (\ref{tilde_estimator}). Indeed,
by Theorems \ref{th1} and \ref{th2}, we have that, uniformly in $\theta_0\in\ell_2$,
\begin{align*}
&\mathrm{E}_{\theta_0} \big[\mathrm{P}(\|\theta-\tilde{\theta}\|\ge M
\mathnormal{r}(\theta_0)|X) \big] \le 
\mathrm{E}_{\theta_0} \big[\mathrm{P}(\|\theta-\theta_0\| \ge 
\tfrac{1}{2}M\mathnormal{r}(\theta_0)|X) \big] \notag\\
& \qquad + 
\mathrm{E}_{\theta_0} \big[\mathrm{P}(\|\theta_0-\tilde{\theta}\| \ge 
\tfrac{1}{2}M\mathnormal{r}(\theta_0)|X) \big]
\le  \frac{4(C_{or}+C_{est})}{M^2}=\phi_1(M),\\
&\mathrm{P}_{\theta_0}\big(\|\theta_0-\tilde{\theta}\| \ge 
 M\mathnormal{r}(\theta_0)\big)
\le \frac{\mathrm{E}_{\theta_0} \|\theta_0-\tilde{\theta}\| ^2}
{M^2 \mathnormal{r}^2(\theta_0)} \le \frac{C_{est}}{M^2}=\phi_2(M).
\end{align*}
This means that  conditions (A1) and (A3) are also fulfilled for 
the estimator $\tilde{\theta}$ defined by (\ref{tilde_estimator}), 
with $\phi_1(M)=\frac{4(C_{or}+C_{est})}{M^2}$ and  
$\phi_2(M)=\frac{C_{est}}{M^2}$, uniformly in $\theta_0\in\ell_2$.
Arguing as above, we obtain that Theorem \ref{th6} also holds for the DD-center $\tilde{\theta}$ 
and the confidence ball  $B(\tilde{\theta},  M\hat{r}_\kappa)$. 

\paragraph{Connection with the minimax results of \cite{Szabo&etal:2015}} 
For the mildly ill-posed inverse signal-in-white-noise model (\ref{model}), 
an intriguing paper \cite{Szabo&etal:2015} deals with a certain 
Sobolev type family of priors, indexed by a smoothness parameter. 
The proposed DDM is the empirical Bayes posterior with respect to the smoothness 
parameter. This DDM is then used to construct a DDM-credible ball 
whose coverage and size properties are studied. 
The main results of  the paper 
are the asymptotic (in our notation: as $\varepsilon\to 0$) versions of the minimax 
framework (\ref{adapt_conf_ball_problem}) with $\Theta'_{cov} =\Theta_{pt}$
(the \emph{polished tail} class $\Theta_{pt}$ defined in Subsection \ref{lebr}),
and four choices of scales: Sobolev type scales of hyperrectangles and ellipsoids 
and the two so called \emph{supersmooth} scales 
(analytic ellipsoid and parametric hyperrectangle). 
The proposed DDM 
is well suited to model Sobolev-type scales: the optimal (minimax) radial rates are 
obtained in the size relation of  (\ref{adapt_conf_ball_problem}) for Sobolev hyperrectangles 
and ellipsoids; but only suboptimal rates are obtained 
for the two supersmooth scales.

Since $\Theta_{pt}\subseteq \Theta_{eb}$ and the considered
four scales are particular examples of scales for which (\ref{local_impl_global}) holds,
the non-asymptotic versions of the minimax results for all 
these four scales (including the two supersmooth scales) immediately 
follow from Theorem \ref{th6} for the DDM $\mathrm{P}(\cdot|X)$ (\ref{ddm1}). 
Asymptotic versions can readily be derived from 
the non-asymptotic ones. 
We emphasize that the scope of the DDM $\mathrm{P}(\cdot|X)$ 
in delivering the minimax rates extends further than just these four scales. Theorem \ref{th6} 
implies the minimax results  of type (\ref{adapt_conf_ball_problem}) for all 
scales for which (\ref{local_impl_global}) holds; for example, in view of (\ref{loc<global}), for all ellipsoids  
$\mathcal{E}(a)$ and hyperrectangles $\mathcal{H}(a)$ defined by (\ref{ellipsoids}).
Other smoothness scales 
can also be treated. Some details are provided in Supplement. 
 
 
\section{Proofs of Theorems \ref{th1} and \ref{th2a_posterior}}
\label{sec_proofs}

\subsection{Proof of Theorem \ref{th1}}
\paragraph{Step 1: bounds for $\mathrm{E}_{\theta_0}\mathrm{P}(\mathcal{I}=I|X)$}
 
For any $I, I_0 \in \mathbb{N}$ and any $h \in [0,1]$,  we have 
\begin{align}
\label{th1_lem3}
\mathrm{E}_{\theta_0}\mathrm{P}(\mathcal{I}=I|X)
\le \mathrm{E}_{\theta_0}\Big[\frac{\lambda_I 
\bigotimes_i  \varphi(X_i,X_i(I),\tau_i^2(I)+\sigma_i^2)}
{ \lambda_{I_0} \bigotimes_i \varphi(X_i,X_i(I_0),\tau_i^2(I_0)
+\sigma_i^2)} \Big]^h.
\end{align}
Recall the elementary identity:
for $Y\sim N(\mu,\sigma^2)$ and $b>-\sigma^{-2}$,
\begin{eqnarray}
\label{element_ineq}
\mathrm{E}\big(\exp\{- bY^2/2\big\}\big) 
=\exp\Big\{-\dfrac{\mu^2 b}{2(1+b\sigma^2)} 
-\frac{1}{2} \log(1+b\sigma^2)\Big\}.
\end{eqnarray}
Using (\ref{th1_lem3}) and (\ref{element_ineq}) with $h=1$,
we derive that, for any $I,I_0 \in \mathbb{N}$ such that $I<I_0$,  
\begin{align}
\mathrm{E}_{\theta_0}\mathrm{P}(\mathcal{I}=I|X) 
&\le\mathrm{E}_{\theta_0}
\frac{\lambda_I (K+1)^{-I/2} 
\exp\big\{ -\sum_{i=I+1}^\infty \frac{X_i^2}{2\sigma_i^2}\big\}} 
{\lambda_{I_0}  (K+1)^{-I_0/2} 
\exp\big\{-\sum_{i=I_0+1}^\infty \frac{X_i^2}{2\sigma_i^2}\big\}}\notag\\
&= e^{\alpha(I_0-I)}(K+1)^{(I_0-I)/2} 
\mathrm{E}_{\theta_0}
\exp\Big\{-\frac{1}{2}\sum_{I+1}^{I_0} \frac{X_i^2}{\sigma_i^2}\Big\}\notag\\
\label{rel_I<I_o}
&=e^{-(\alpha+a_k)I}\exp\Big\{(\alpha+a_K)I_0-
\frac{1}{4}\sum_{I+1}^{I_0}\frac{\theta_{0,i}^2}{\sigma_i^2}\Big\},
\end{align}
where $a_K= \frac{1}{2} \log(\frac{K+1}{2})$.
Now apply (\ref{th1_lem3}) and (\ref{element_ineq}) to the case $I>I_0$:
for any $h\in[0,1)$,
\begin{align}
&\mathrm{E}_{\theta_0}\mathrm{P}(\mathcal{I}=I|X) 
\le e^{\alpha h(I_0-I)} (K+1)^{(I_0-I)h/2} 
\mathrm{E}_{\theta_0}
\exp\Big\{\frac{h}{2}\sum_{i=I_0+1}^{I} \frac{X_i^2}{\sigma_i^2}\Big\}= \notag\\
\label{rel0_I>I_o}
& e^{-\alpha h I/2}\exp\Big\{-\frac{\alpha h I}{2}+\alpha hI_0- b_{K,h}(I-I_0)
+\frac{h}{2(1-h)}\sum_{i=I_0+1}^{I}\frac{\theta_{0,i}^2}{\sigma_i^2}\Big\},
\end{align}
where $b_{K,h}=\frac{h}{2}\log(K+1) +\frac{1}{2} \log(1-h)$. 
Clearly, $b_{K,h} >0$ if $K>(1-h)^{-1/h}-1$. 
Now take $h=0.1$ in (\ref{rel0_I>I_o}), 
then $b_{K,0.1}=\frac{1}{20}\log(K+1)+\frac{1}{2}\log(0.9)>0$ 
since $K\ge 1.87>(10/9)^{10}-1$ by the condition of the theorem.
Thus, for any $I,I_0 \in \mathbb{N}$ such that $I>I_0$, we derive  
\begin{align}
\label{rel_I>I_o}
\mathrm{E}_{\theta_0}\mathrm{P}(\mathcal{I}=I|X) 
&\le 
e^{-\alpha I/20}\exp\Big\{-\frac{\alpha}{20} \Big(I - 2I_0 
-\frac{10}{9\alpha}\sum_{i=I_0+1}^{I}\frac{\theta_{0,i}^2}{\sigma_i^2}  \Big)
\Big\}.
\end{align}

\paragraph{Step 2: a bound by the sum of three terms}
Recall $\mathnormal{r}^2(I,\theta_0)=\sum_{i\le I}\sigma_i^2
+\sum_{i> I} \theta_{0,i}^2$ and $\mathnormal{r}^2(\theta_0)=\mathnormal{r}^2(I_o,\theta_0)=
\min_I \mathnormal{r}^2(I,\theta_0)$. Notice that 
\begin{align}
\label{risk_expansion}
\mathnormal{r}^2(I, \theta_0) \le \mathnormal{r}^2(\theta_0) + 
\mathrm{1} \{I \le I_o\} \sum_{i=I+1}^{I_o} \theta_{0,i}^2+ 
\mathrm{1} \{I>I_o\}  \sum_{i=I_o+1}^I \sigma_i^2.
\end{align}
Next, as $\mathrm{P}_I (\cdot|X)=\bigotimes_i N(X_i\mathrm{1}\{i\in \mathbb{N}_I\}, 
L\sigma_i^2\mathrm{1}\{i\le I\})$ with $L=\frac{K}{K+1} \le 1$, we obtain 
by applying the Markov inequality that
\begin{align}
\mathrm{P}_I (\|\theta - \theta_0\| &\ge M \mathnormal{r}(\theta_0)|X)
\le \frac{ \mathrm{E}_I (\|\theta - \theta_0\|^2 |X)}{M^2 \mathnormal{r}^2(\theta_0)} 
\notag \\ 
&=
\frac{L \sum_{i\le I} \sigma_i^2+\sum_{i>I}  \theta_{0,i}^2
+\sum_{i \le I} (X_i -\theta_{0,i})^2}{M^2 \mathnormal{r}^2(\theta_0)} 
\notag\\ &
\label{upsilon_bound1}
\le 
\frac{\mathnormal{r}^2(I,\theta_0)+\sum_{i \le I} \sigma_i^2\xi_i^2}
{M^2 \mathnormal{r}^2(\theta_0)} \triangleq \upsilon_I,
\end{align}
where $\xi_i=\sigma_i^{-1}(X_i -\theta_{0,i}) 
\stackrel{\tiny \rm ind}{\sim}N(0,1)$  
from the $\mathrm{P}_{\theta_0}$-perspective.
Denote for brevity $p_I=\mathrm{P}(\mathcal{I}=I|X)$,  so that 
$p_I \in [0,1]$ and $\sum_I p_I =1$.
In view of  (\ref{ddm1}) and (\ref{upsilon_bound1}),
\begin{align}
\mathrm{P}(\|\theta - \theta_0\| \ge M \mathnormal{r}(\theta_0)|X)
&\le  \sum_I  \upsilon_I p_I 
\label{sum_of_Ts}
=T_1 +T_2 +T_3,
\end{align}
where $T_1=\sum_{I \le I_o}  \upsilon_I p_I$, $T_2=
\sum_{I_o <I \le \tau I_o}  \upsilon_I p_I$, $T_3=\sum_{I> \tau I_o}  \upsilon_I p_I $,
and $\tau>2$ to be chosen later.

\paragraph{Step 3: handling the term $T_1$}
For $\tau_1> 0$ to be chosen later, introduce the sets 
\begin{align*}
\mathcal{O}^-&=\mathcal{O}^-(\tau_1,\theta_0)=
\Big\{ I: \, I \le I_o, \, \sum_{i=I+1}^{I_o} \theta_{0,i}^2 \le \tau_1\sum_{i=1}^{I_o}  \sigma_i^2\Big\}, \\
\mathcal{N}^-&=\mathcal{N}^-(\tau_1,\theta_0)=
\Big\{ I: \, I \le I_o, \, \sum_{i=I+1}^{I_o} \theta_{0,i}^2 > \tau_1\sum_{i=1}^{I_o}  \sigma_i^2\Big\}.
\end{align*}
By (\ref{risk_expansion}), $\max_{I \in\mathcal{O}^-} \mathnormal{r}^2(I,\theta_0) 
\le (1+\tau_1) \mathnormal{r}^2(\theta_0)$. This and (\ref{upsilon_bound1}) imply
\begin{align}
\label{rel_T1a}
\mathrm{E}_{\theta_0} \sum_{I\in \mathcal{O}^-} \upsilon_I p_I   \le 
\mathrm{E}_{\theta_0} \max_{I \in\mathcal{O}^-} \upsilon_I  \le
\frac{1+\tau_1}{M^2} +
\frac{\mathrm{E}_{\theta_0}\sum_{i\le I_o}\sigma_i^2 \xi_i^2 }{M^2 \mathnormal{r}^2(\theta_0)} 
\le \frac{2+\tau_1}{M^2}.
\end{align}

The property (i) from (\ref{condition_sigma}) yields that
$I_o \le \frac{K_1}{\sigma_{I_o}^2}\sum_{i=1}^{I_o}\sigma_i^2$. 
Besides, for each $I \in \mathcal{N}^-$,
$\sum_{i=1}^{I_o}  \sigma_i^2< \tau_1^{-1} \sum_{i=I+1}^{I_o}\theta_{0,i}^2$.
Now set $\tau_1 = \frac{4(\alpha+a_K)K_1}{5}$. 
The last two relations and (\ref{rel_I<I_o}) imply that, for each $I \in \mathcal{N}^-$,
\begin{align}
\mathrm{E}_{\theta_0} p_I &= \mathrm{E}_{\theta_{0}}{\mathrm{P}(\mathcal{I}=I|X)} \notag 
\le 
e^{-(\alpha+a_K) I} \exp\Big\{(\alpha+a_K)I_o-
\frac{1}{4}\sum_{I+1}^{I_0}\frac{\theta_{0,i}^2}{\sigma_i^2}\Big\} \\
&\le 
e^{-(\alpha+a_K) I} \exp\Big\{\frac{(\alpha+a_K)K_1}{\sigma_{I_o}^2}
\sum_{i=1}^{I_o}\sigma_i^2-\frac{1}{4\sigma_{I_o}^2}\sum_{i=I+1}^{I_o} 
\theta^2_{i,0} \Big\} \notag\\
\label{rel_Ep_I}
& 
\le e^{-(\alpha+a_K)I}  
\exp\Big\{- \frac{1}{\sigma_{I_o}^{2}}\sum_{i=I+1}^{I_o} 
\theta_{0,i}^2 \Big\}.
\end{align}
Using the fact that $\max_{x\ge 0}\{xe^{-cx} \}\le (ce)^{-1}$ (for any $c>0$), 
 (\ref{risk_expansion}), (\ref{upsilon_bound1}) and (\ref{rel_Ep_I}), we obtain
\begin{align*}
&\mathrm{E}_{\theta_0} \sum_{I\in \mathcal{N}^-} \upsilon_I p_I  
\le 
\mathrm{E}_{\theta_0}  \sum_{I\in \mathcal{N}^-}
\frac{\mathnormal{r}^2(\theta_0) +\sum_{i=I+1}^{I_o} 
\theta_{0,i}^2+\sum_{i \le I} \sigma_i^2\xi_i^2}
{M^2 \mathnormal{r}^2(\theta_0)} p_I\\
&\le
\frac{1}{M^2}+
\frac{\mathrm{E}_{\theta_0} \sum_{i \le I_o} \sigma_i^2\xi_i^2}
{M^2 \mathnormal{r}^2(\theta_0)} +
 \sum_{I\in \mathcal{N}^-}
\frac{\big(\sum_{i=I+1}^{I_o}\theta_{0,i}^2\big)\mathrm{E}_{\theta_0}  p_I}
{M^2 \mathnormal{r}^2(\theta_0)}\\
&\le
\frac{2}{M^2} 
+\sum_{ I \in\mathcal{N}^-}  
\frac{\big(\sum_{i=I+1}^{I_o} 
\theta_{0,i}^2\big)\exp\big\{-\sigma_{I_o}^{-2}\sum_{i=I+1}^{I_o} 
\theta_{0,i}^2\big\} e^{-(\alpha+a_K)I}}{M^2 \mathnormal{r}^2(\theta_0)}  
\\
&\le \frac{2}{M^2}+\sum_{ I \in \mathcal{N}^-}
\frac{e^{-(\alpha+a_K)I} e^{-1}\sigma_{I_o}^2}{M^2 \mathnormal{r}^2(\theta_0)} 
\le
\frac{2}{M^2} + \frac{e^{-1}}{M^2} \sum_I e^{-(\alpha+a_K)I}=
\frac{C_1} 
{M^2}. 
\end{align*}
where $C_1 = 2+\frac{e^{-(1+\alpha+a_K)}}{1-e^{-(\alpha+a_K)}}$, $\tau_1 = \frac{4(\alpha+a_K)K_1}{5}$.
The last relation and (\ref{rel_T1a}) give 
\begin{align}
\label{term_T1}
\mathrm{E}_{\theta_0} T_1 
=\mathrm{E}_{\theta_0} \sum_{I\in \mathcal{O}^-(\tau_1,\theta_0)} \upsilon_I p_I +
\mathrm{E}_{\theta_0} \sum_{I\in \mathcal{N}^-(\tau_1,\theta_0)} \upsilon_I p_I
\le \frac{C_2}{M^2},
\end{align}
where $C_2 = 2+\tau_1+C_1= 4+\frac{4(\alpha+a_K)K_1}{5} + 
\frac{e^{-(1+\alpha+a_K)}}{1-e^{-(\alpha+a_K)}}$.

\paragraph{Step 4: handling the term $T_2$}
Since $p_I \in [0,1]$ and $\sum_I p_I =1$, 
$\mathrm{E}_{\theta_0} T_2 =\mathrm{E}_{\theta_0} \sum_{I_o <I \le \tau I_o}  \upsilon_I p_I
\le \mathrm{E}_{\theta_0} [\max_{I_o <I \le \tau I_o} \upsilon_I ]$.
Using this, (\ref{risk_expansion}), (\ref{upsilon_bound1}), (\ref{sum_of_Ts}) and
the property (ii) from (\ref{condition_sigma}), we get
\begin{align}
&\mathrm{E}_{\theta_0} T_2 
\le \mathrm{E}_{\theta_0} \max_{I_o <I \le \tau I_o} \upsilon_I  
\le
\frac{\max_{I_o <I \le \tau I_o}\mathnormal{r}^2(I,\theta_0)+
\mathrm{E}_{\theta_0} \sum_{i \le \tau I_o} \sigma_i^2\xi_i^2}
{M^2 \mathnormal{r}^2(\theta_0)}  \notag \\
&\le 
\label{term_T2}
\frac{1}{M^2}+
\frac{2\sum_{i\le \tau I_o}\sigma_i^2}
{M^2 \mathnormal{r}^2(\theta_0)} 
\le\frac{1}{M^2}+\frac{2 K_2(\tau) \sum_{i=1}^{I_o}\sigma_i^2}
{M^2 \mathnormal{r}^2(\theta_0)} 
\le \frac{1+2K_2(\tau)}{M^2}.
\end{align}

\paragraph{Step 5: handling the term $T_3$}
For some $\tau_2>0$ to be chosen later, introduce the sets
\begin{align*}
\mathcal{O}^+
&=\mathcal{O}^+(\tau,\tau_2,\theta_0)=
\Big\{ I\in \mathbb{N}: \, I >\tau I_o, \, \sum_{i=I_o+1}^I \sigma_i^2 
\le \tau_2\sum_{i > I_o}  \theta_{0,i}^2\Big\}, \\
\mathcal{N}^+
&= 
\mathcal{N}^+(\tau,\tau_2,\theta_0)= 
\Big\{I\in\mathbb{N}: \, I>\tau I_o, \, \sum_{i=I_o+1}^I \sigma_i^2 > \tau_2\sum_{i > I_o}  \theta_{0,i}^2\Big\}.
\end{align*}
By (\ref{risk_expansion}), 
$\max_{I \in\mathcal{O}^+} \mathnormal{r}^2(I,\theta_0) 
\le (1+\tau_2) \mathnormal{r}^2(\theta_0)$.  
Let $I^+ = \max\{\mathcal{O}^+\}$, then 
$\sum_{i\le I^+}\sigma_i^2 \le \sum_{i=1}^{I_o} \sigma_i^2 +\tau_2\sum_{i > I_o}  \theta_{0,i}^2\le 
(1\vee \tau_2) \mathnormal{r}^2(\theta_0)$. 
In view of (\ref{upsilon_bound1}), the last two relations entail that
\begin{align}
\mathrm{E}_{\theta_0} \sum_{I\in \mathcal{O}^+} \upsilon_I p_I   
&\le \mathrm{E}_{\theta_0} \max_{I \in\mathcal{O}^+} \upsilon_I  
\le 
\label{rel_O^+}
\frac{1+\tau_2}{M^2} +
\frac{\mathrm{E}_{\theta_0}\sum_{i\le I^+}\sigma_i^2 \xi_i^2 }
{M^2 \mathnormal{r}^2(\theta_0)} 
\le \frac{2(1+\tau_2)}{M^2}.
\end{align}

Let $K_4$ and $\tau>2$ be from property (iv) of (\ref{condition_sigma}),
then $\Sigma(m) - \Sigma(\lfloor m/\tau \rfloor ) \ge K_4\Sigma(m)$ for any $m\ge \tau$.
This entails that, for each $I \in\mathcal{N}^+$,
\begin{align*}
\sum_{i=\lfloor I/\tau \rfloor +1}^I \!\!\! \sigma_i^2 &\ge 
K_4 \sum_{i=1}^I \sigma_i^2  
\ge K_4\sum_{i=I_o+1}^I \sigma_i^2  
\ge
K_4\tau_2\sum_{i=I_o+1} ^I\theta_i^2 \ge 
K_4\tau_2 \sum_{i=\lfloor I/\tau \rfloor +1}^I \theta_i^2.
\end{align*}
For each $I \in\mathcal{N}^+$, take $I_0=I_0(I)=
\lfloor I/\tau \rfloor$, then apply the property (v) of (\ref{condition_sigma})
and the last inequality with $\tau_2 = \frac{10\tau}{9\alpha(\tau-2)K_4K_5(\tau)}$
to derive
\begin{align*}
I &- 2 I_0- \frac{10}{9\alpha}\sum_{i=I_0+1} ^I \frac{\theta_{0,i}^2}{\sigma_i^2} \ge 
(1-\tfrac{2}{\tau}) I - \frac{10}{9\alpha}\sum_{i=I_0+1} ^I \frac{\theta_{0,i}^2}{\sigma_i^2}\\
&\ge (1-\tfrac{2}{\tau})  K_5  
\sum_{i=I_0+1}^I \frac{\sigma_i^2}{\sigma_{I_0}^2} - 
\frac{10}{9\alpha}\sum_{i=I_0+1}^I \frac{\theta_{0,i}^2}{\sigma_{I_0}^2}  \\
& \ge 
\frac{(\tau-2)K_5(\tau) K_4\tau_2}{\tau} \sum_{i=I_0+1}^I \frac{\theta_{0,i}^2}{\sigma_{I_0}^2} 
- \frac{10}{9\alpha}\sum_{i=I_0+1}^I \frac{\theta_{0,i}^2}{\sigma_{I_0}^2} =0.
\end{align*}
The last relation and the bound (\ref{rel_I>I_o}) with $I_0=\lfloor I/\tau \rfloor$ imply  
that 
\begin{align}
\label{rel_Ep_I2}
\mathrm{E}_{\theta_0} p_I=\mathrm{E}_{\theta_0}\mathrm{P}(\mathcal{I}=I|X) 
\le e^{- \gamma I},
\;\; I \in\mathcal{N}^+, \;\; 
\gamma= \frac{\alpha}{20}.
\end{align}

Since $p_I \in[0,1]$ and
$\mathrm{E} \Big[\sum_{i=1}^m \sigma_i^2 \xi_i^2\Big]^2 \le 
3 \Big[\sum_{i=1}^m \sigma_i^2\Big]^2$  for any $m\in\mathbb{N}$,  
we obtain by the Cauchy-Schwartz inequality that 
\begin{align}
\label{rel_sigma_xi}
\mathrm{E}_{\theta_0} \big[p_I \sum_{i\le I} \sigma_i^2 \xi_i^2 \big] 
\le 
\big(\mathrm{E} _{\theta_0} p^2_I\big)^{1/2} \sqrt{3}\sum_{i \le I} \sigma_i^2 
\le \sqrt{3} \big(\mathrm{E} _{\theta_0} p_I\big)^{1/2}\sum_{i\le I} \sigma_i^2 .
\end{align}
Combining (\ref{risk_expansion}), (\ref{upsilon_bound1}), (\ref{rel_Ep_I2}),
(\ref{rel_sigma_xi}), the property (iii) of (\ref{condition_sigma}) and 
the fact that $\varepsilon^2=\sigma_1^2 \le \mathnormal{r}^2(\theta_0)$, we derive
\begin{align*}
&\mathrm{E}_{\theta_0}\sum_{I\in \mathcal{N}^+} p_I \upsilon_I
=
\sum_{I\in \mathcal{N}^+(\tau,\tau_2)}  
\frac{\mathnormal{r}^2(I,\theta_0) \mathrm{E}_{\theta_0} p_I +  
\mathrm{E}_{\theta_0} \big[p_I \sum_{i\le I} \sigma_i^2 \xi_i^2\big] }
{M^2 \mathnormal{r}^2(\theta_0)}\\
&\le
\frac{1}{M^2} + \sum_{I\in \mathcal{N}^+} 
\frac{\big(\sum_{i=I_o+1}^I \sigma_i^2\big) \mathrm{E}_{\theta_0} p_I +
\sqrt{3}\big(\sum_{i\le I} \sigma_i^2 \big) 
\big(\mathrm{E}_{\theta_0} p_I\big)^{1/2}}{M^2 \mathnormal{r}^2(\theta_0)} \\
&\le \frac{1}{M^2}+ 
\sum_{I\in \mathcal{N}^+} 
\frac{ \varepsilon^2 \big[\big(\sum_{i=I_o+1}^I \kappa_i^2\big) e^{- \gamma  I} +
\sqrt{3}\big(\sum_{i\le I} \kappa_i^2 \big) e^{-\gamma I/2} \big] }
{M^2 \mathnormal{r}^2(\theta_0)} \\
&\le \frac{1+K_3(\gamma) +\sqrt{3} K_3(\gamma/2)}{M^2}.  
\end{align*}
Finally, the last relation and (\ref{rel_O^+}) entail  the bound
\begin{align}
\label{term_T3}
\mathrm{E}_{\theta_0} T_3 
=\mathrm{E}_{\theta_0} \sum_{I\in \mathcal{O}^+(\tau,\tau_2,\theta_0)} \upsilon_I p_I +
\mathrm{E}_{\theta_0} \sum_{I\in \mathcal{N}^+(\tau,\tau_2,\theta_0)} \upsilon_I p_I
\le \frac{C_3}{M^2},
\end{align}
where $C_3 = 2(1+\tau_2) +1+K_3(\gamma) +\sqrt{3} K_3(\gamma/2)$.

\paragraph{Step 6: finalizing the proof}
Piecing together the relations  (\ref{sum_of_Ts}), (\ref{term_T1}), 
(\ref{term_T2}) and (\ref{term_T3}), we finally obtain
\[
\mathrm{E}_{\theta_0} \mathrm{P}(\|\theta - \theta_0\| 
\ge M \mathnormal{r}(\theta_0)|X) 
\le 
\mathrm{E}_{\theta_0} (T_1 + T_2 + T_3) \le  \frac{C_{or}}{M^2}.
\]
The constant $C_{or}=C_{or}(K,\alpha)$ is as follows:
\begin{align*}
C_{or} &=  
C_2 + 1+2K_2(\tau) +2(1+\tau_2)+1+K_3(\gamma) +\sqrt{3} K_3(\gamma/2),
\end{align*}
where $C_2 = 4+ \frac{4(\alpha+a_K)K_1}{5}+\frac{e^{-(1+\alpha+a_K)}}{1-e^{-(\alpha+a_K)}}$, 
$a_K= \frac{1}{2} \log\big(\frac{K+1}{2}\big)$,
$\tau_2 = \frac{10\tau}{9\alpha(\tau-2)K_4K_5(\tau)}$, $\gamma = \frac{\alpha}{20}$,
the constants  $\tau, K_1, K_2, K_3, K_4, K_5$ are from (\ref{condition_sigma}).

\subsection{Proof of Theorem \ref{th2a_posterior}} 

\paragraph{Step 1: first technical lemma}
\begin{lemma} 
\label{lem_oversmooth}
Let DDM $\mathrm{P}_{K,\alpha}(\mathcal{I}=I|X)$ be given by (\ref{p(I|X)}) 
with parameters $K,\alpha>0$ chosen in such a way that 
$a(K)>\alpha$, with  $a(K)$ defined by (\ref{rho}). 
Let $\varkappa_0=\varkappa_0(K,\alpha)= \frac{a(K)-\alpha}{a(K)}$. 
Then for any $\theta_0\in\ell_2$ and any $\varkappa \in [0,\varkappa_0)$
\begin{align}
\label{I<}
\mathrm{E}_{\theta_0} 
 \mathrm{P}(\mathcal{I}\le \varkappa  \bar{I}_o|X)
 \le C \exp\big\{- c \bar{I}_o\big\},
 \end{align}
where $c=a(K)(1-\varkappa)-\alpha>0$, 
$C=C_\alpha^{-1}=(e^\alpha-1)^{-1}$, 
and $\bar{I}_o=\bar{I}_o(\theta_0)$ is defined by (\ref{surr_oracle}).
\end{lemma}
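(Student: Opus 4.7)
\emph{Proof plan.} The plan is to recycle the key exponential bound from Step 1 of the proof of Theorem \ref{th1}, but now with $I_0$ chosen to equal the surrogate oracle $\bar{I}_o$, and then to exploit a defining inequality for $\bar{I}_o$ to turn the $\theta_{0,i}^2/\sigma_i^2$ term into a clean linear bound in $\bar{I}_o - I$.

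\textbf{Step 1.} For any $I<\bar{I}_o$, invoke the bound (\ref{rel_I<I_o}) with $I_0=\bar{I}_o$, giving
\begin{align*}
\mathrm{E}_{\theta_0}\mathrm{P}(\mathcal{I}=I|X)\le \exp\Big\{-(\alpha+a_K)I+(\alpha+a_K)\bar{I}_o-\tfrac{1}{4}\sum_{i=I+1}^{\bar{I}_o}\tfrac{\theta_{0,i}^2}{\sigma_i^2}\Big\},
\end{align*}
where $a_K=\tfrac12\log\tfrac{K+1}{2}$ so that $a(K)=\tfrac14-a_K$.

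\textbf{Step 2 (surrogate oracle inequality).} By the definition (\ref{surr_oracle}) of $\bar{I}_o$ as the minimizer of $\mathnormal{R}^2(I,\theta_0)=I\varepsilon^2+\sum_{i>I}\theta_{0,i}^2/\kappa_i^2$, we have for any $I<\bar{I}_o$
\begin{align*}
(\bar{I}_o-I)\varepsilon^2\le \sum_{i=I+1}^{\bar{I}_o}\tfrac{\theta_{0,i}^2}{\kappa_i^2},\quad\text{i.e.,}\quad \bar{I}_o-I\le \sum_{i=I+1}^{\bar{I}_o}\tfrac{\theta_{0,i}^2}{\sigma_i^2},
\end{align*}
using $\sigma_i^2=\varepsilon^2\kappa_i^2$. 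Plugging this into Step 1 and collecting the coefficient $\tfrac14-\alpha-a_K=a(K)-\alpha$ yields
\begin{align*}
\mathrm{E}_{\theta_0}\mathrm{P}(\mathcal{I}=I|X)\le \exp\{-(a(K)-\alpha)(\bar{I}_o-I)\}, \qquad I<\bar{I}_o.
\end{align*}

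\textbf{Step 3 (splitting and summing).} Factor the exponent as
\begin{align*}
-(a(K)-\alpha)(\bar{I}_o-I)= -a(K)(\bar{I}_o-I) +\alpha(\bar{I}_o-I)
\end{align*}
and, restricting to $I\le \varkappa\bar{I}_o$, bound $a(K)(\bar{I}_o-I)\ge a(K)(1-\varkappa)\bar{I}_o$, obtaining
\begin{align*}
\mathrm{E}_{\theta_0}\mathrm{P}(\mathcal{I}=I|X)\le e^{-(a(K)(1-\varkappa)-\alpha)\bar{I}_o}\, e^{-\alpha I} = e^{-c\bar{I}_o}e^{-\alpha I}.
\end{align*}
Summing and extending the geometric series to infinity,
\begin{align*}
\mathrm{E}_{\theta_0}\mathrm{P}(\mathcal{I}\le \varkappa\bar{I}_o|X)\le e^{-c\bar{I}_o}\sum_{I\ge 1}e^{-\alpha I}=\frac{e^{-c\bar{I}_o}}{e^{\alpha}-1}=C_\alpha^{-1} e^{-c\bar{I}_o},
\end{align*}
which gives the claimed bound with $C=C_\alpha^{-1}$ and $c=a(K)(1-\varkappa)-\alpha>0$ (positivity follows from $\varkappa<\varkappa_0=(a(K)-\alpha)/a(K)$).

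There is no real obstacle here: the only nontrivial input beyond (\ref{rel_I<I_o}) is the surrogate-oracle inequality of Step 2, which is immediate once one notices that $\bar{I}_o$ is an oracle for the direct model $\tilde X_i=X_i/\kappa_i\sim N(\theta_{0,i}/\kappa_i,\varepsilon^2)$. The constant $(e^\alpha-1)^{-1}$ in the statement is tailored precisely to absorb the $\sum e^{-\alpha I}$ tail, which is why the decomposition in Step 3 is split as $a(K)-\alpha=[a(K)(1-\varkappa)-\alpha]+\alpha(1-\varkappa)-a(K)\varkappa+a(K)\varkappa$ style bookkeeping; keeping one copy of $e^{-\alpha I}$ inside the sum is what produces $C_\alpha^{-1}$ instead of a bound in terms of $(1-e^{-(a(K)-\alpha)})^{-1}$.
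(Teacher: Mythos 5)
Your proof is correct and follows essentially the same route as the paper: the bound (\ref{rel_I<I_o}) with $I_0=\bar{I}_o$, the minimality of $\bar{I}_o$ in (\ref{surr_oracle}) to get $\sum_{i=I+1}^{\bar{I}_o}\theta_{0,i}^2/\sigma_i^2\ge \bar{I}_o-I$, and then the same exponent manipulation; your summation of the geometric series $\sum_I e^{-\alpha I}$ is just the paper's use of $\sum_I\lambda_I=1$ together with $1/\lambda_{\bar{I}_o}$ written differently, yielding the identical constant $C_\alpha^{-1}$. (The decomposition formula in your closing remark is garbled, but it plays no role in the actual argument.)
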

\begin{proof}[Proof of Lemma \ref{lem_oversmooth}]
By the definition (\ref{surr_oracle}) of the surrogate oracle,  
$\mathnormal{R}^2(I,\theta_0) \ge \mathnormal{R}^2(\bar{I}_o,\theta_0)$
for any $\theta_0\in \ell_2$. For $I<\bar{I}_o$, this implies that
$\sum_{i=I+1}^{\bar{I}_o} \frac{\theta^2_{0,i}}{\sigma_i^2}\ge \bar{I}_o-I$. 
Using this, we obtain that for $I\le \varkappa \bar{I}_o$  
\begin{align*}
\frac{1}{4}\sum_{i=I+1}^{\bar{I}_o} \frac{\theta^2_{0,i}}{\sigma^2_i}
&- \frac{1}{2}\log\Big[\frac{K+1}{2}\Big](\bar{I}_o-I) 
\ge \Big(\frac{1}{4} - \frac{1}{2}\log\Big[\frac{K+1}{2}\Big] \Big) (\bar{I}_o-I) \\
&=a(K)(\bar{I}_o-I)
\ge a(K) (1-\varkappa)\bar{I}_o.
\end{align*}
The lemma follows from the last relation, (\ref{rel_I<I_o}) and the fact that 
$\sum_I \lambda_I=1$:
\begin{align*}
&\mathrm{E}_{\theta_0}\mathrm{P}(\mathcal{I}\le \varkappa \bar{I}_o |X)
\le\sum_{I\le \varkappa I_o} \frac{\lambda_I}{\lambda_{\bar{I}_o}}
\exp\Big\{- \frac{1}{4}\sum_{i=I+1}^{\bar{I}_o} \frac{\theta^2_{0,i}}{\sigma^2_i}
- \frac{1}{2}\log\Big[\frac{K+1}{2}\Big](\bar{I}_o-I) \Big\} \\
&\le 
\sum_{I\le \varkappa \bar{I}_o} \frac{ \lambda_I}{\lambda_{\bar{I}_o}} 
\exp\{-(a(K)(1-\varkappa))\bar{I}_o\}  
\le \tfrac{1}{C_\alpha} \exp\big\{-(a(K)(1-\varkappa) -\alpha) \bar{I}_o\big\}.  \qedhere
\end{align*} 
\end{proof}

\paragraph{Step 2:  second technical lemma}
\begin{lemma}
\label{lem_bound_volume}
Let $\Lambda(S)$ be the Lebesgue measure (or volume) of a bounded 
set $S\subset \mathbb{R}^k$, $k\in\mathbb{N}$, and $B_k(r)=
\{x\in \mathbb{R}^k:\, \|x\|\le r\}$ (here $\|\cdot\|$ is the 
usual Euclidean norm in $\mathbb{R}^k$) be the Euclidean ball of radius 
$r$ in space $\mathbb{R}^k$. Then
\[
\Lambda(B_k(r))  \le e \pi^{-1/2} r^k k^{-(k+1)/2} (2\pi e)^{k/2}.
\]
\end{lemma}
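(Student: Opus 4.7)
\textbf{Plan for the proof of Lemma \ref{lem_bound_volume}.} The starting point is the classical closed form
\[
\Lambda(B_k(r)) \;=\; \frac{\pi^{k/2}}{\Gamma(k/2+1)}\,r^k,
\]
so the only real content of the statement is a quantitative lower bound on $\Gamma(k/2+1)$. I would use the standard Stirling lower bound, valid for every $x>0$,
\[
\Gamma(x+1)\;\ge\;\sqrt{2\pi x}\,\Bigl(\tfrac{x}{e}\Bigr)^{x},
\]
which is an immediate consequence of the well-known representation $\Gamma(x+1)=\sqrt{2\pi x}\,(x/e)^{x}\exp(\theta_x/(12x))$ with $\theta_x\in(0,1)$.

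Applying this with $x=k/2$ yields
\[
\Gamma(k/2+1)\;\ge\;\sqrt{\pi k}\,\Bigl(\frac{k}{2e}\Bigr)^{k/2},
\]
and substituting back into the volume formula gives
\[
\Lambda(B_k(r))\;\le\;\frac{\pi^{k/2}\,r^{k}}{\sqrt{\pi k}\,(k/(2e))^{k/2}}
\;=\;\pi^{-1/2}\,r^{k}\,k^{-(k+1)/2}\,(2\pi e)^{k/2},
\]
after collecting powers of $\pi$, $e$ and $k$. This is actually \emph{stronger} than the claimed inequality by the multiplicative factor $e$; the constant $e$ in the statement is therefore slack, inserted presumably so that the form can be combined with other factors downstream without having to track a tight numerical constant.

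I do not foresee any genuine obstacle: the argument is a one-line substitution into the exact ball volume formula plus a single invocation of Stirling. The only point worth checking is that the Stirling lower bound $\Gamma(x+1)\ge \sqrt{2\pi x}(x/e)^x$ is valid on the whole range relevant here (all $k\in\mathbb{N}$, i.e.\ $x=k/2\ge 1/2$), which is granted by the exact representation with the positive error term $e^{\theta_x/(12x)}\ge 1$. Hence no case distinction between small and large $k$ is needed.
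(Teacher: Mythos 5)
Your proof is correct and follows essentially the same route as the paper: write down the exact ball-volume formula $\Lambda(B_k(r))=\pi^{k/2}r^k/\Gamma(k/2+1)$ and lower-bound the Gamma function via Stirling. The only cosmetic difference is where Stirling is applied: the paper expands $\Gamma(x)=\sqrt{2\pi}\,x^{x-1/2}e^{-x+\varsigma/(12x)}$ at $x=1+k/2$ and then massages the $(1+k/2)^{(k+1)/2}$ factor, which costs it the extra multiplicative $e$ appearing in the stated bound; you instead use the clean form $\Gamma(x+1)\ge\sqrt{2\pi x}\,(x/e)^x$ at $x=k/2$, which avoids that factor and yields the sharper inequality $\Lambda(B_k(r))\le\pi^{-1/2}r^k k^{-(k+1)/2}(2\pi e)^{k/2}$, of which the lemma's bound (with the superfluous $e$) is an immediate weakening. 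No gap; if anything your version is marginally tighter while remaining the same one-line argument.
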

\begin{proof}[Proof of Lemma \ref{lem_bound_volume}]
By using Stirling's approximation for the Gamma function 
$\Gamma(x)=\sqrt{2\pi}\,  x^{x-1/2} e^{-x+\varsigma/(12 x)}$ 
for all $x\ge 1$  and some $0\le \varsigma\le C$, we derive  
\begin{align*}
\Gamma\big(1+\tfrac{k}{2}\big)&=
\sqrt{2\pi} \big(1+\tfrac{k}{2}\big)^{\frac{k+1}{2}} 
e^{-1-\frac{k}{2}+\frac{\varsigma}{6k+12}}
=\frac{(1+\frac{2}{k})^{(k+1)/2}\sqrt{\pi}}{e^{1-\varsigma/(6k+12)}}  k^{\frac{k+1}{2}} (2e)^{-\frac{k}{2}}\\
&=c_k k^{(k+1)/2} (2e)^{-k/2} \ge  e^{-1}\pi^{1/2}k^{(k+1)/2} (2e)^{-k/2},
\end{align*}
because $c_k=\frac{(1+2/k)^{(k+1)/2}\sqrt{\pi}}{e^{1-\varsigma/(6k+12)}}> \frac{\sqrt{\pi}}{e}$.
Combining the last relation with the well known fact that
$\Lambda(B_k(r)) = r^k \Lambda(B_k(1))=\frac{r^k\pi^{k/2}}{\Gamma(1+k/2)}$ 
completes the proof of the lemma.
\end{proof}

\paragraph{Step 3:  small ball bound for $\mathrm{P}_I(\cdot|X)$}
Recall that, with $L=K/(K+1)$,
\[
\mathrm{P}_I(\theta|X) =\bigotimes_i N(X_i\mathrm{1}\{i \le I\}, 
L\sigma_i^2\mathrm{1}\{i\le I\}),  \quad I\in \mathbb{N}.
\]
We have that $\Sigma(I) = \sum_{i=1}^I \sigma_i^2 \le \varepsilon^2 \frac{(2I)^{2p+1}}{2p+1}$.
By Stirling's bound, $ \prod_{i=1}^I \kappa_i=(I!)^p \ge \big((I/e)^I\sqrt{2\pi I}\big)^p$.
Let $Z_1,\ldots, Z_I$ be independent $N(0,1)$ random variables.
Using these relations, Anderson's inequality and Lemma \ref{lem_bound_volume}, 
we obtain that, $\mathrm{P}_{\theta_0}$-almost surely,
\begin{align}
&\mathrm{P}_I\big(\|\theta -\hat{\theta}\|
\le \delta\Sigma^{1/2}(\bar{I}_o)| X) =
\mathrm{P}_I\big(\|\theta -\hat{\theta}\|^2
\le \delta^2\Sigma(\bar{I}_o)| X) \notag\\
&=\mathrm{P}\Big(\sum_{i\le I} (X_i+ \sigma_i \sqrt{L}Z_i  - \hat{\theta}_i)^2 +
\sum_{i>I} \hat{\theta}_i^2 \le \delta^2 \Sigma(\bar{I}_o) |X\Big) \notag\\
&\le 
\mathrm{P}\Big(L\sum_{i\le I} \sigma_i^2 Z_i^2  
\le \delta^2  \Sigma(\bar{I}_o)\Big)
\le
\mathrm{P}\Big(\sum_{i\le I} \sigma_i^2 Z_i^2 \le 
\frac{\delta^2 \Sigma(\bar{I}_o)}{L} \Big) \notag\\
&\le 
\frac{\Lambda\big(B_I(\delta
 \sqrt{\Sigma(\bar{I}_o)/L})\big)} {\prod_{i=1}^I (2\pi \sigma_i^2)^{1/2}} \le
\frac{(2\pi)^{-I/2}}{\prod_{i=1}^I \varepsilon \kappa_i} 
 \frac{e}{\sqrt{\pi}}
\Big(\frac{\delta^2 \Sigma(\bar{I}_o)}{L}\Big)^{I/2} I^{-\frac{I+1}{2}} (2\pi e)^{I/2}  \notag\\&
\le
\label{th5_rel1}
\frac{e I^{-(p+1)/2}}{(2\pi)^{p/2} \sqrt{\pi}}
\Big[\Big(\frac{2e \bar{I}_o}{I}\Big)^{p+1/2}
\Big(\frac{\delta}{\sqrt{L(2p+1)}}\Big)\Big]^I.
\end{align}  

\paragraph{Step 4: applying Lemma \ref{lem_oversmooth}}
Denote for brevity $\varrho=a(K)-\alpha$. By (\ref{rho}), $\varrho>0$.
Applying Lemma \ref{lem_oversmooth} with  
$\varkappa=\frac{\varkappa_0}{2}=\frac{a(K)-\alpha}{2a(K)}$  
(so that $a(K)(1-\varkappa)-\alpha = \frac{a(K)-\alpha}{2}=\frac{\varrho}{2}$), we obtain  
\begin{equation}
\label{th5_rel2}
\mathrm{E}_{\theta_0} \mathrm{P}(\mathcal{I}< 
 \varkappa  \bar{I}_o|X)
 \le C_\alpha^{-1} e^{-\varrho \bar{I}_o/2} 
\end{equation}
for every $\theta_0\in \ell_2$.
Consider the two cases: $e^{-\varrho \bar{I}_o/2} \le \delta$ and 
$e^{-\varrho \bar{I}_o/2}>\delta$.


 \paragraph{Step 5: the case $e^{-\varrho \bar{I}_o/2}>\delta$}
If $e^{-\varrho \bar{I}_o/2}>\delta$, then $\bar{I}_o< 2 \varrho^{-1} \log (\delta^{-1})$.
By using this, (\ref{ddm1}), (\ref{th5_rel1}) and the notation $p_I= \mathrm{P}(\mathcal{I}=I | X)$, 
we derive that, for $e^{-\varrho\bar{I}_o/2}>\delta$,
\begin{align}
\mathrm{E}_{\theta_0}\mathrm{P} &
\big(\|\theta -\hat{\theta}\|\le \delta\Sigma^{1/2}(\bar{I}_o)\big| X\big)=
\mathrm{E}_{\theta_0}\sum_I \mathrm{P}_I \big(\|\theta-\hat{\theta}\|^2
\le \delta^2 \Sigma(\bar{I}_o) | X\big) p_I \notag\\
&\le 
\sum_I \frac{e I^{-(p+1)/2}}{(2\pi)^{p/2} \sqrt{\pi}}
\Big[\Big(\frac{2e \bar{I}_o}{I}\Big)^{p+1/2}
\Big(\frac{\delta}{\sqrt{L(2p+1)}}\Big)\Big]^I \mathrm{E}_{\theta_0} p_I \notag\\
&\le C_2\delta\big[\log (\delta^{-1})\big]^{p+1/2} \sum_I 
\frac{\big(C_1\delta \big[\log(\delta^{-1})\big]^{p+1/2}\big)^{I-1}}{I^{I(p+1/2) +(p+1)/2}}
\mathrm{E}_{\theta_0} p_I \notag\\
\label{th5_rel3}
&\le C_3\delta \big[\log(\delta^{-1})\big]^{p+1/2},
\end{align}
with $C_1=\frac{(4e/\varrho)^{p+1/2}}{(L(2p+1))^{1/2}}$, 
$C_2=\frac{C_1e}{\pi^{1/2} (2\pi)^{p/2}}$, $\varrho=a(K)-\alpha$, 
$L=\frac{K}{K+1}$,  and some $C_3=C_3(\varrho,L,p)$.

\paragraph{Step 6: the case $e^{-\varrho \bar{I}_o/2} \le \delta$}
Now consider the case $e^{-\varrho\bar{I}_o/2} \le \delta$.
Clearly, $\sum_{I<\varkappa \bar{I}_o} p_I = 
\mathrm{P}\big(\mathcal{I}<\varkappa \bar{I}_o\big|X\big)$.
In view of this, (\ref{th5_rel1}) and (\ref{th5_rel2}), 
\begin{align*}
&\mathrm{E}_{\theta_0}\mathrm{P}
\big(\|\theta-\hat{\theta}\| \le \delta   \Sigma^{1/2} (\bar{I}_o) \big| X\big)=
\mathrm{E}_{\theta_0}\sum_I \mathrm{P}_I \big(\|\theta-\hat{\theta}\|^2
\le \delta^2  \Sigma(\bar{I}_o) | X\big) p_I 
\notag\\
&\le 
\sum_{ I\ge \varkappa \bar{I}_o} 
\frac{e I^{-\frac{p+1}{2}}}{(2\pi)^{\frac{p}{2}} \sqrt{\pi}}
\Big[\Big(\frac{2e\bar{I}_o}{I}\Big)^{p+\frac{1}{2}}
\Big(\frac{\delta}{\sqrt{L(2p+1)}}\Big)\Big]^I
\mathrm{E}_{\theta_0} p_I 
+\mathrm{E}_{\theta_0} \mathrm{P}\big(\mathcal{I}< 
\varkappa \bar{I}_o\big|X\big)\notag\\
& \le 
C_4\delta \sum_I 
\frac{\Big[\big(\frac{2e}{\varkappa}\big)^{p+\frac{1}{2}} 
\frac{\delta}{\sqrt{L(2p+1)}}\Big]^{I-1}}{I^{(p+1)/2}}
\mathrm{E}_{\theta_0} p_I 
+\frac{e^{-\varrho \bar{I}_o/2}}{C_\alpha}\le 
(C_4+C_\alpha^{-1})\delta
\end{align*}
if $e^{-\varrho \bar{I}_o/2} \le \delta$ and 
$\big(\frac{2e}{\varkappa}\big)^{p+\frac{1}{2}} \frac{\delta}{\sqrt{L(2p+1)}}\le 1$. 
Here $C_4 = \frac{e}{(2\pi)^{\frac{p}{2}}\sqrt{\pi L(2p+1)}} 
\big(\frac{2e}{\varkappa}\big)^{p+\frac{1}{2}}$.

\paragraph{Step 7: finalizing the proof of Theorem \ref{th2a_posterior}}
The last relation 
holds if $e^{-\varrho\bar{I}_o/2} \le \delta 
\le \sqrt{L(2p+1)}\big(\frac{\varkappa}{2e}\big)^{p+1/2}= 
\sqrt{\frac{K(2p+1)}{K+1}}\big(\frac{\varkappa}{2e}\big)^{p+1/2}
=\bar{\delta}_{sb}$ and 
the relation (\ref{th5_rel3}) holds 
if $e^{-\varrho\bar{I}_o/2} > \delta$.
Combining these two relations concludes the proof of the theorem:
for  $0<\delta\le (1 \wedge \bar{\delta}_{sb})=\delta_{sb}$, we have that
\[
\mathrm{E}_{\theta_0}\mathrm{P}
\big(\|\theta-\hat{\theta}\|\le \delta \Sigma^{1/2}(\bar{I}_o)\big| X\big)
\le \max\{C_3,C_4+C_\alpha^{-1}\} \delta
\big[\log(\delta^{-1})\big]^{p+\frac{1}{2}}.
\]



\newpage
\appendix
\setcounter{equation}{0}
\def\theequation{S\arabic{equation}}

\section{Supplement to ``On coverage and local radial rates of DDM-credible sets''}

In this supplement, we provide the elaboration on some points  
and some background  information related to the paper 
``On coverage and local radial rates of DDM-credible sets''.

In what follows we use the notations and cross-references to numbered elements 
(like equations, sections) from the paper. We again often drop the dependence 
on $\varepsilon$ to avoid overloaded notations.
For two sequences $\alpha_\varepsilon, \beta_\varepsilon>0$, 
$\alpha_\varepsilon \asymp\beta_\varepsilon$ means that 
$\alpha_\varepsilon/\beta_\varepsilon$ is bounded away from zero 
and infinity as $\varepsilon\to 0$.

\subsection{Minimax confidence ball: degenerate optimal solution}

Optimality is a well developed notion in the framework
of minimax estimation theory and therefore the first approach 
to optimality of confidence sets would be based on the minimax 
convergence rates. 
Suppose our prior knowledge about the model 
$X \sim \mathrm{P}_\theta =\mathrm{P}^{(\varepsilon)}_\theta$
is formalized as follows: 
$\theta \in \Theta_\beta \subseteq \Theta$.
Here we consider \emph{non-adaptive} situation, that is, the parameter $\beta \in \mathcal{B}$ is known
and we can use this knowledge in the construction of the confidence ball.
Parameter $\beta$ typically has a meaning of smoothness of $\theta$.
By using lower bounds from the minimax estimation theory, we show below  
that the minimax rate $R_\varepsilon(\Theta_\beta)$ 
is in some sense the \emph{best global radial rate}, i.e., the smallest possible among 
all radial rates that are constant on $\Theta_\beta$.

Let $w:\, \mathbb{R}_+ \to \mathbb{R}_+$, be a loss function, i.e., nonnegative and 
nondecreasing on $\mathbb{R}_+$, $w(0)=0$ and $w\not\equiv 0$.
The \emph{maximal risk} of an estimator $\hat{\theta}$ over $\Theta_\beta$ is 
$r_\varepsilon(\Theta_\beta, \hat{\theta}) 
=r_\varepsilon(\Theta_\beta, \hat{\theta},R_\varepsilon)=\sup_{\theta\in\Theta_\beta} 
\mathrm{E}_\theta [w(R^{-1}_\varepsilon d(\hat{\theta},\theta))]$ 
(calibrated by a sequence $R_\varepsilon>0$), and
the  \emph{minimax risk} over $\Theta_\beta$ is 
$r_\varepsilon(\Theta_\beta) = r_\varepsilon(\Theta_\beta,R_\varepsilon) = 
\inf_{\hat{\theta}} r_\varepsilon(\Theta_\beta, \hat{\theta},R_\varepsilon)$,
where the infimum is taken over all possible estimators 
$\hat{\theta}=\hat{\theta}(X) \in \mathcal{L}$, 
measurable functions of the data $X$.
We consider here the asymptotic regime $\varepsilon \to 0$ as 
in the most literature on minimax estimation theory.
A positive sequence $R_\varepsilon=R_\varepsilon(\Theta_\beta)$ 
and an estimator $\hat{\theta}$ are called \emph{minimax rate} and
\emph{minimax estimator} respectively if, for $0<b\le B<\infty$,
\begin{equation}
\label{minimax}
 b\le \liminf_{\varepsilon \to 0} r_\varepsilon(\Theta_\beta,R_\varepsilon) \le 
\limsup_{\varepsilon \to 0} r_\varepsilon(\Theta_\beta, \hat{\theta},R_\varepsilon)\le B.
\end{equation}
The first inequality is called lower bound and the last one upper bound.
Note that the minimax rate is not unique. If $w(u) = u^p$, 
$p>0$ (the most popular choice: quadratic loss function $p=2$),
then often the quantity $\mathnormal{r}_\varepsilon(\Theta_\beta) = 
\inf_{\hat{\theta}} \sup_{\theta\in\Theta_\beta} 
\big(\mathrm{E}_\theta \big[d(\hat{\theta},\theta)\big]^p\big)^{1/p}$, is called the minimax risk. 
In this case, the minimax risk is itself the minimax rate, but so is any sequence
$R_\varepsilon(\Theta_\beta) \asymp \mathnormal{r}_\varepsilon(\Theta_\beta)$. 
If the set $\Theta_\beta$ is known, $\mathnormal{r}_\varepsilon(\Theta_\beta)$ is in 
principle known as well, one would like to derive an explicit expression 
$R_\varepsilon(\Theta_\beta)$ for the minimax rate. There is vast literature on 
this topic, minimax rates and estimators are obtained in a variety of models, 
settings and smoothness classes $\Theta_\beta$.
For example, in classical nonparametric regression model
and density estimation problem with  Sobolev, H\"older or Besov  
classes $\Theta_\beta$ of $d$-variate functions of smoothness 
$\beta$  and the sample size $n$, the minimax rate is
$R_\varepsilon(\Theta_\beta) =(\varepsilon^2)^{\frac{\beta}{2\beta+d}}$ with 
$\varepsilon= n^{-1/2}$.

Suppose that a lower bound in (\ref{minimax}) is established for
zero-one loss $w(u) = \mathrm{1}\{u \ge c\}$ 
and a (minimax) rate $R_\varepsilon(\Theta_\beta)$: for any $\hat{\theta}$ and some $b>0$,
\begin{equation}
\label{lower_bound1}
\liminf_{\varepsilon \to 0} 
\sup_{\theta\in\Theta_\beta} 
\mathrm{P}_\theta \big(\theta\not\in B(\hat{\theta}, cR_\varepsilon(\Theta_\beta)) \big)=
\liminf_{\varepsilon \to 0} r_\varepsilon(\Theta_\beta, \hat{\theta}, 
R_\varepsilon(\Theta_\beta)) \ge b.
\end{equation}
We claim that it is impossible for a confidence ball $B(\hat{\theta},\hat{r})$
to have simultaneously a global radial rate of a smaller order than 
$R_\varepsilon(\Theta_\beta)$ and its coverage probability being arbitrarily close to 1
uniformly in $\theta \in \Theta_\beta$. 

There are two ways to establish lower bounds for the optimality of 
confidence sets: either assume the coverage relation in (\ref{conf_ball_problem}) 
and show that the size relation must fail or the other way around. In the literature, 
the former approach is commonly used for global minimax radial rates, 
cf.\  \cite{Robins&vanderVaart:2006sup}. However, when we construct confidence 
sets as credible balls with respect to some DDM $\mathrm{P}(\cdot|X)$, 
it is more natural to use the latter approach since the DD-radius gets determined by the DDM 
and typically the size requirement in (\ref{conf_ball_problem}) holds true for the whole set $\Theta$,
whereas the coverage requirement fails to hold for  some ``deceptive'' $\theta\in\Theta$.
 
More precisely, if we assume 
\begin{equation}
\label{lower_bound2}
\liminf_{\varepsilon\to 0} 
\inf_{\theta\in\Theta} \mathrm{P}_\theta \big( \hat{r} 
\le cR_\varepsilon(\Theta_\beta) \big) \ge 1-b/2,
\end{equation}
then 
\begin{align*}
\mathrm{P}_\theta \big(\theta\not\in B(\hat{\theta},\hat{r})\big)
&=
\mathrm{P}_\theta \big(\theta\not\in B(\hat{\theta},\hat{r}), 
\hat{r} \le R_\varepsilon(\Theta_\beta)\big)  +
\mathrm{P}_\theta \big(\theta\not\in B(\hat{\theta},\hat{r}), 
\hat{r} > R_\varepsilon(\Theta_\beta)\big)\\
&\ge 
\mathrm{P}_\theta \big(\theta\not\in B(\hat{\theta},R_\varepsilon(\Theta_\beta)), 
\hat{r} \le R_\varepsilon(\Theta_\beta)\big)  \\
&\ge 
\mathrm{P}_\theta \big(\theta\not\in B(\hat{\theta},R_\varepsilon(\Theta_\beta))\big)
+\mathrm{P}_\theta \big(\hat{r} \le R_\varepsilon(\Theta_\beta)\big) -1.
\end{align*}
Combining this with (\ref{lower_bound1}) and (\ref{lower_bound2}),  we obtain
\[
\liminf_{\varepsilon \to 0} \sup_{\theta\in\Theta_\beta} 
\mathrm{P}_\theta \big(\theta\not\in B(\hat{\theta},\hat{r})\big)
\ge b+1-b/2-1 \ge b/2,
\]
which gives a bound on the coverage probability of 
$B(\hat{\theta},\hat{r})$, at least for some (worst representatives) 
$\theta\in\Theta_\beta$.
We thus established that it is impossible for a confidence ball $B(\hat{\theta},\hat{r})$
to have simultaneously a global radial rate of a smaller order than 
$R_\varepsilon(\Theta_\beta)$ and its coverage probability being arbitrarily close to 1
uniformly in $\theta \in \Theta_\beta$.

On the other hand, suppose now that there is a minimax estimator $\hat{\theta}$ satisfying 
(\ref{minimax}), with, say, $w(u)=u$, and the corresponding  minimax rate 
$R_\varepsilon(\Theta_\beta)$.
If we use the minimax risk $R_\varepsilon(\Theta_\beta)$ as the benchmark 
for the effective radius of confidence balls, then the problem of constructing 
an optimal confidence ball satisfying (\ref{conf_ball_problem}) with the radial rate
$\mathnormal{r}_\varepsilon(\theta) =R_\varepsilon(\Theta_\beta)$ is readily solved.
Indeed, since in this non-adaptive setting the quantity $R_\varepsilon(\Theta_\beta)$ 
is in principle known (could be difficult to evaluate in models), we can simply
take the following confidence ball $B(\hat{\theta},CR_\varepsilon(\Theta_\beta))$, 
i.e., $\hat{r} =R_\varepsilon(\Theta_\beta)$. Then, by (\ref{minimax}),
\[
\limsup_{\varepsilon \to 0}
\sup_{\theta\in\Theta_\beta} \mathrm{P}_\theta \big(\theta \not\in 
B(\hat{\theta},CR_\varepsilon(\Theta_\beta)) \big) 
\le \limsup_{\varepsilon \to 0}\frac{\sup_{\theta\in\Theta_\beta} 
\mathrm{E}_\theta d(\theta,\hat{\theta})}{CR_\varepsilon(\Theta_\beta)} \le 
\frac{B}{C},
\]
so that the coverage relation in (\ref{conf_ball_problem}) will hold for 
sufficiently large $C$. The size relation in (\ref{conf_ball_problem}) 
is trivially satisfied for any $c>1$ any $\alpha_2\in[0,1]$
since $\hat{r} =\mathnormal{r}_\varepsilon(\theta)=R_\varepsilon(\Theta_\beta)$.
This means that the ball $B(\hat{\theta},CR_\varepsilon(\Theta_\beta))$ 
satisfies (\ref{conf_ball_problem}) with 
$\mathnormal{r}_\varepsilon(\theta)=R_\varepsilon(\Theta_\beta)$ 
and $\Theta_{cov}=\Theta_{size}=\Theta_\beta$, for appropriate choices 
of involved constants. Thus the ball 
$B(\hat{\theta},CR_\varepsilon(\Theta_\beta))$, with the deterministic radius 
$R_\varepsilon(\Theta_\beta)$, is optimal in the minimax sense (in the non-adaptive formulation). 
Knowledge $\theta\in\Theta_\beta$  and the fact that radial rates 
are restricted to be global lead to such a simplistic optimal solution. 
But this solution is of course not satisfactory, because even if we know a priori that $\theta\in\Theta_\beta$, 
it is possible that $\theta\in \Theta_{\beta_1} \subset \Theta_\beta$, 
with $\beta_1\not =\beta$. Then the obtained radial rate 
$R_\varepsilon(\Theta_\beta) > R_\varepsilon(\Theta_{\beta_1})$ is bigger than it could have been 
if one had used a ball with a DD-radius that can adapt to the rate $R_\varepsilon(\Theta_{\beta_1})$.

This consideration illustrates that minimax non-adaptive framework 
for the confidence inference lead to degenerate and uninteresting ``optimal'' solution.

\subsection{Bayes approach yields DDMs} 

Suppose we are given a general statistical model $X\sim \mathrm{P}_\theta$, 
$\theta \in \Theta$, and we want to construct a DDM on 
parameter $\theta$. Typically, one obtains a DDM on $\theta$ by  applying a
Bayesian approach: put a prior $\pi$ on $\theta$ 
and regard $\mathrm{P}_\theta$ as conditional distribution of 
$X$ given $\theta$, i.e., $X | \theta \sim \mathrm{P}_{\theta}$,  $\theta \sim
\pi$. This leads to the posterior distribution $\Pi(\theta|X)$ which is a DDM 
on $\theta$. A DD-center $\hat{\theta}= \hat{\theta}(X)$  can in turn be 
constructed by using $\Pi(\theta|X)$, e.g., as the mean with respect to 
$\Pi(\theta|X)$ or the MAP-estimator.
Other examples of DDMs include empirical Bayes, (generalized) fiducial 
distributions and bootstrap. In fact, any combination of these can  be used as DDM.
 
In an adaptive inference context, one typically has a family of
priors $\{\pi_\beta, \, \beta \in\mathcal{B}\}$, where parameter $\beta$
models some additional structure  on $\theta$; 
sometimes $\beta$ has a meaning of ``smoothness''. 
There are two basic approaches to 
derive a resulting adaptive posterior $\Pi(\theta|X)$: pure Bayes or empirical Bayes. 
In the first case, we construct a hierarchical prior on  $(\theta,\beta)$:
regard $\pi_\beta$ as a conditional prior
on $\theta$ given $\beta$, and next we put a prior, say $\lambda$, on 
$\beta \in \mathcal{B}$.  This leads to the posteriors $\Pi(\theta|X)$ 
and $\lambda(\beta|X)$ (that may be also useful in the inference).
In the empirical Bayes approach, each prior $\pi_\beta$ leads to the
posterior $\Pi_\beta(\theta|X)$. We then compute the marginal distribution
$\Pi_\beta$ of $X$ and construct an estimator $\hat{\beta}$ by using 
this marginal distribution (for example, marginal maximum likelihood). 
Next we plug in the obtained $\hat{\beta}$ in the posterior $\Pi_\beta(\theta|X)$, 
so that we get the so called empirical Bayes posterior
$\hat{\Pi}(\theta|X) = \Pi_{\hat{\beta}}(\theta|X)$.
Both resulting DDMs $\Pi(\theta|X)$ and $\hat{\Pi}(\theta|X)$  can be used 
in the construction of confidence sets as DDM-credible sets. 
Also any combination of full Bayes and empirical Bayes approaches
(with respect to different parameters) that leads to some resulting
DDM $\mathrm{P}(\theta|X)$ can in principle be used. 

To some extent, we can manipulate with DDMs as with usual conditional measures. For example,  
if we have a family of DDMs on $\Theta$, say, $\{\mathrm{P}_I(\cdot|X), \, I\in \mathbb{N}\}$ 
and a DDM $\mathrm{P}(\mathcal{I}=I|X)$ on $\mathbb{N}$, we can construct a mixture DDM
$\mathrm{P}(\cdot|X)=\sum_I\mathrm{P}_I(\cdot|X)\mathrm{P}(\mathcal{I}=I|X)$.

\subsection{Remarks about Conditions (A1)--A(3), (\~A1)--(\~A2)} 
\label{subsec_remarks}
Here we collect some remarks about Conditions (A1)--A(3), (\~A1)--(\~A2).

\paragraph{Asymptotic versions of conditions (A1)--(A3) and (\~A1)--(\~A2)}
Suppose a point $\theta_0\in\Theta$, some radial rate 
$\mathnormal{r}(\theta)$, a DDM 
$\mathrm{P}(\cdot|X)$ and a DD-center $\hat{\theta}=\hat{\theta}(X)$ are given, 
 $M_\varepsilon, M'_\varepsilon, \delta_\varepsilon>0$ and  $\varepsilon \to 0$.
The asymptotic versions of conditions (A1)--(A3), (\~A1)--(\~A2) are as follows.
\begin{itemize}
\item[(AA1)] For some 
$M_\varepsilon \to \infty $, 
$\mathrm{E}_{\theta_0} \big[\mathrm{P}(d(\theta,\hat{\theta}) \ge M_\varepsilon
\mathnormal{r}(\theta_0)|X) \big] \to 0$.
 
\item[(AA2)] For some  
$\delta_\varepsilon \to 0$,
$\mathrm{E}_{\theta_0} \big[\mathrm{P}(d(\theta,\hat{\theta}) \le \delta_\varepsilon
\mathnormal{r}(\theta_0)|X) \big] \to 0$.

\item[(AA3)]  For  some  
$M'_\varepsilon \to\infty $, 
$\mathrm{P}_{\theta_0} \big(d(\theta_0,\hat{\theta}) \ge M'_\varepsilon
\mathnormal{r}(\theta_0) \big)\to 0$.
\end{itemize}

\begin{itemize}
\item[(A\~A1)]  
For  some  
$M_\varepsilon \to \infty $,
$\mathrm{E}_{\theta_0} \big[\mathrm{P}(d(\theta_0,\theta) \ge M_\varepsilon
\mathnormal{r}(\theta_0)|X) \big] \to 0$.
\item[(A\~A2)]  
For some $\delta_\varepsilon \to 0$
and any measurable $\tilde{\theta}=\tilde{\theta}(X)$,
$\mathrm{E}_{\theta_0} \big[\mathrm{P}(d(\theta,\tilde{\theta}) \le \delta_\varepsilon
\mathnormal{r}(\theta_0)|X) \big] \to 0$.
\end{itemize}

\paragraph{Connection to Bayesian nonparametrics}
In the Bayesian framework, when the DDM
$\mathrm{P}(\cdot|X)$ is the posterior (or empirical Bayes posterior) distribution 
on $\theta$ with respect to some prior, condition (\~A1) (and its asymptotic 
version (A\~A1) below) describes the so called posterior contraction rate
$\mathnormal{r}(\theta_0)$. 
To establish such assertions is an interesting and challenging problem nowadays,
especially in nonparametric models when one wants to characterize 
the (frequentist) quality of Bayesian procedures. Much recent research has been devoted to this topic.
We just mention that predominantly global posterior convergence rates
are studied, i.e., $\mathnormal{r}(\theta_0) =R(\Theta)$ 
for all $\theta_0\in \Theta$.
To the best of our knowledge a local posterior convergence rate is considered 
only in \cite{Babenko&Belitser:2010sup}.


\paragraph{Pushing the conditions to the utmost}
The smaller the radial rate $\mathnormal{r}(\theta_0)$, 
the easier (A2) to satisfy, but the harder (A1), (A3) and (\~A1).  
We are interested in the smallest possible radial rate 
since this quantity will govern the size of the resulting confidence ball. 
Thus, the right strategy would be first to determine the smallest radial rate 
$\mathnormal{r}(\theta_0)$ for which (\~A1) (or (A1) and (A3)) holds,
preferably uniformly over $\theta_0\in\Theta$. This would be the so called upper
bound  for the contraction rate of the DDM $\mathrm{P}(\cdot|X)$ 
around $\theta_0 \in \Theta$. Next, one needs to study whether (A2) holds as well
with $\mathnormal{r}(\theta_0)$ for $\theta_0\in\Theta$; 
if not possible for all $\theta_0\in\Theta$, then for $\theta_0\in\Theta_0$ 
with the ``largest'' $\Theta_0 \subset \Theta$. This is so called lower bound
for the contraction rate of the DDM $\mathrm{P}(\cdot|X)$ 
around $\hat{\theta}$. 

Typically, the upper bound (\~A1) for the DDM-contraction rate 
holds for all $\theta\in\Theta$ with a ``good'' local radial rate,
whereas the lower bound (A2) only for $\theta \in \Theta_0$, 
with some set of  ``non-deceptive'' parameters $\Theta_0\subset\Theta$.

 

\subsection{Examples of applying Propositions \ref{prop2a} and \ref{prop2b}}

 \paragraph{Normal case}
Suppose we observe a sample $X=X^{(\varepsilon)}=(X_1,\ldots, X_n)$ 
from $N(\theta_0, \sigma^2)$, $\theta_0\in\mathbb{R}$, 
where $\varepsilon = \sigma n^{-1/2}$. 
Take the estimator $\hat{\theta}= \bar{X}=\frac{1}{n}\sum_{i=1}^n X_i\sim N(\theta_0, \varepsilon^2)$
and the radial rate $\mathnormal{r}(\theta_0) =
\mathnormal{r}_\varepsilon(\theta_0) =\varepsilon$.
The normal prior $\pi = N(\mu,\tau^2)$ on $\theta$, 
leads to the normal posterior $\pi(\theta|X)=N\big(\frac{\varepsilon^2 \mu+\tau^2 \bar{X}}
{\varepsilon^2+\tau^2}, \frac{\varepsilon^2\tau^2}{\varepsilon^2+\tau^2}\big)$.
Then, as DDM on $\theta$ we take 
\[
\mathrm{P}(\theta|X)=\pi(\theta|X)\big|_{\mu=\hat{\mu}}=
N \Big(\bar{X}, \frac{\varepsilon^2\tau^2}{\varepsilon^2+\tau^2}\Big),
\]
the empirical Bayes posterior with $\hat{\mu}=\bar{X}$, 
and construct the  DDM-credible ball (in this case: interval) $B(\hat{\theta},M\hat{r}_\kappa)$ 
for $\theta_0$, according to the general procedure from the paper. 
Then (A1) and (A3) are satisfied with $\phi_1(M) = \phi_2(M) = 
C e^{-cM^2}/M$. Indeed, for a $\xi\sim N(0,1)$, 
\[
\mathrm{P}\big(|\hat{\theta}-\theta| \ge M\mathnormal{r}(\theta_0)|X\big)=
\mathrm{P}\Big(\frac{\varepsilon\tau|\xi | }{\sqrt{\varepsilon^2+\tau^2}}
\ge M\varepsilon\Big) \le
\mathbb{}P(|\xi| \ge M) \le\frac{2e^{-M^2/2}}{\sqrt{2\pi}M},
\]
\[
\mathrm{P}\big(|\hat{\theta}-\theta_0| \ge M\mathnormal{r}(\theta_0)\big)=
\mathrm{P}\big(|\varepsilon \xi | \ge M \varepsilon\big) = 
\mathbb{}P(|\xi| \ge M) \le \frac{2e^{-M^2/2}}{\sqrt{2\pi}M}.
\]
Assume $\varepsilon \le \tau$,  then condition (A2) is also satisfied 
with $\psi(\delta)=\delta/\sqrt{\pi}$:   
\[
\mathrm{P} \big(|\hat{\theta}-\theta| \le \delta\mathnormal{r}(\theta_0)|X\big)=
\mathrm{P}\big(|\xi| \le \delta \sqrt{1+\varepsilon^2/\tau^2}\big) 
\le \mathrm{P}(|\xi| \le \delta\sqrt{2})\le \delta/\sqrt{\pi}.
\]
One can think of the above two properties of the normal distribution as  ``ring tightness''.
The functions $\phi_1, \phi_2$ and $\psi$ do not depend on $\varepsilon$ and $\theta_0$,
so that, by using Propositions \ref{prop2a} and \ref{prop2b} as described above, we can derive non-asymptotic 
coverage and size relations in (\ref{conf_ball_problem}) for the DDM-credible interval 
$B(\hat{\theta},M\hat{r}_\kappa)$.

Of course, 
the classical  confidence interval $\bar{X}_n\pm z_{1-\alpha/2}\sigma/\sqrt{n}$
has the same radial rate whose coverage may even be (non-asymptotically) 
better. In that respect, the above example is somewhat uninteresting and is 
provided only for  the illustrative  purposes.

\paragraph{Bernstein-von Mises case}
For the finite dimensional parameter, consider a general situation 
when some mild regularity conditions on the model and the prior 
lead to the resulting asymptotically normal posterior. This is the so called 
\emph{Bernstein-von Mises} property as often termed in the literature. 
Suppose $X=X^{(n)}\sim \mathrm{P}_{\theta_0}$, $\theta\in\Theta$,
information parameter $\varepsilon = n^{-1/2}$,
with a prior $\theta\sim \pi$ on some $\sigma$-algebra 
$\mathcal{B}_{\Theta}$ on $\Theta$ and a $\sqrt{n}$-consistent 
estimator $\hat{\theta}$ such that  the asymptotic version of 
(A3), namely (AA3)  (given in Subsection \ref{subsec_remarks}), is satisfied with 
the radial rate $\mathcal{R}_n(\theta_0)=n^{-1/2}$ and 
in $\mathrm{P}_{\theta_0}$-probability
\[
\sup_{B\in\mathcal{B}_{\Theta}} \big| \pi(B|X) 
-N(\hat{\theta}, I(\theta_0))(B)\big| \to 0, 
\quad \text{as }\;n \to \infty.
\]
where $N(\mu, \Sigma)(B)=\mathrm{P}(Y \in B)$ with 
$B\sim N(\mu, \Sigma)$ for some multivariate normal distribution with 
mean $\mu$ and covariance matrix $\Sigma$.
Besides, (A1) and (A2) hold for the DDM $N(\hat{\theta}, I(\theta_0))$. All these facts imply that
the asymptotic versions (AA1)--(AA3) introduced in Supplement are satisfied
with $\varepsilon=n^{-1/2}$. Asymptotic versions of Propositions \ref{prop2a} and \ref{prop2b} 
follow immediately, which yields (asymptotically) a full  coverage probability
and the optimal global radial rate $n^{-1/2}$, which is of course well known. 

Interestingly, there is nothing special about normal distribution in the above 
arguments, any resulting limiting distribution with a ``ring structure'' will do the same job.
Ring structure means negligible probability mass outside a ring, whose inner radius is a sufficiently small 
multiples of the radial rate and the outer radius is a sufficiently big multiples of the radial rate.
In fact, the existence of an exact limiting distribution 
is also not decisive, ``ring tightness'' (which is nothing else but (AA1)--(AA2)) 
would be enough. For example, the Bernstein-von Mises
property is more than needed if we only want to make sure that 
a credible set serves as a proper confidence set.

\subsection{Corollary from Propositions \ref{prop1}--\ref{prop2b}} 

Propositions \ref{prop1}--\ref{prop2b} and Remark \ref{rem_def_ball} 
entail the following corollary for the default confidence ball 
$\tilde{B}_{M,\kappa}$ defined by (\ref{def_ball}). 
\begin{corollary} 
\label{cor_cor1}
Let a DDM  $\mathrm{P}(\cdot|X)$ satisfy conditions (\~A1) and (\~A2) 
with some radial rate $\mathnormal{r}(\theta_0)$, $\theta_0\in\Theta$, 
and some functions $\varphi$ and $\psi$, respectively.
Let $\kappa\in (0,1)$, the default ball $\tilde{B}_{M,\kappa}$ be
defined by (\ref{def_ball}) and 
$\hat{r}_\kappa$  be its DD-radius defined by (\ref{radius}).
Then  for  any $M, \delta>0$,
\[
\mathrm{P}_{\theta_0} \big(\theta_0 \not \in \tilde{B}_{M,\kappa}\big)
\le \frac{3\varphi(\frac{2M \delta}{5})}{2}+ \frac{\psi(\delta)}{1-\kappa}, 
\;\; \mathrm{P}_{\theta_0}\big(\hat{r}_\kappa \ge M
\mathnormal{r}(\theta_0)\big)
\le \frac{3\varphi(\frac{M}{5})}{2\kappa} +  \frac{\varphi(\frac{M}{2})}{\kappa}.
\]
\end{corollary}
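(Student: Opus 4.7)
The plan is to assemble Corollary \ref{cor_cor1} by chaining together the three preceding propositions applied with the default DD-center $\check{\theta}$, without doing any new calculations. The key observation is that the default confidence ball $\tilde{B}_{M,\kappa}$ defined in (\ref{def_ball}) is nothing but $B(\check{\theta},M\hat{r}_\kappa)$, so once the hypotheses (A1)--(A3) are verified for $\hat{\theta}=\check{\theta}$ with explicit functions $\phi_1,\phi_2,\psi$, Propositions \ref{prop2a} and \ref{prop2b} fire directly.

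First I would invoke Proposition \ref{prop1}, specialized as in Remark \ref{rem_def_ball} to $p=2/3$, $\varsigma=1/2$, $a=1/2$. Since (\~A1) holds with function $\varphi$, this gives (A1) and (A3) for $\check{\theta}$ with
\[
\phi_1(M)=\tfrac{3}{2}\varphi(M/5)+\varphi(M/2),\qquad \phi_2(M)=\tfrac{3}{2}\varphi(2M/5).
\]
Next, condition (\~A2) by its very formulation implies (A2) for any DD-center, in particular for $\check{\theta}$, with the same function $\psi(\delta)$. Thus all three hypotheses of Propositions \ref{prop2a} and \ref{prop2b} are in force for the DDM $\mathrm{P}(\cdot|X)$ paired with the default center $\check{\theta}$.

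Now I would plug these choices into the two propositions. Proposition \ref{prop2a} gives, for every $M,\delta>0$,
\[
\mathrm{P}_{\theta_0}(\theta_0\notin\tilde{B}_{M,\kappa})\le\phi_2(M\delta)+\frac{\psi(\delta)}{1-\kappa}=\frac{3\varphi(2M\delta/5)}{2}+\frac{\psi(\delta)}{1-\kappa},
\]
which is exactly the first inequality of the corollary. Proposition \ref{prop2b} gives
\[
\mathrm{P}_{\theta_0}(\hat{r}_\kappa\ge M r(\theta_0))\le\frac{\phi_1(M)}{\kappa}=\frac{3\varphi(M/5)}{2\kappa}+\frac{\varphi(M/2)}{\kappa},
\]
which is the second inequality. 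There is essentially no obstacle here: the only thing to be careful about is to track the explicit constants coming from Remark \ref{rem_def_ball}, and to note that in Proposition \ref{prop2a} the function $\phi_2$ is evaluated at the product $M\delta$ (not $M$), which produces the $\varphi(2M\delta/5)$ term rather than $\varphi(2M/5)$.
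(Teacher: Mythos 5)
Your proposal is correct and follows exactly the route the paper intends: the paper itself states that Propositions \ref{prop1}--\ref{prop2b} together with Remark \ref{rem_def_ball} entail the corollary, which is precisely your chaining of (\~A1) through Proposition \ref{prop1} (with $p=2/3$, $\varsigma=1/2$, $a=1/2$), the observation that (\~A2) gives (A2) for the default center $\check{\theta}$, and then Propositions \ref{prop2a} and \ref{prop2b} with $\phi_1(M)=\tfrac{3}{2}\varphi(M/5)+\varphi(M/2)$ and $\phi_2(M)=\tfrac{3}{2}\varphi(2M/5)$. The constant bookkeeping, including evaluating $\phi_2$ at $M\delta$, is exactly right.
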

This corollary can be used for establishing the optimality 
framework (\ref{conf_ball_problem}) in the same way as 
Propositions \ref{prop2a} and \ref{prop2b} as we outlined in 
Subsection \ref{subsec_cov_size}, provided the functions 
$\varphi$ and $\psi$ from conditions (\~A1) and (\~A2) are bounded uniformly 
over appropriate sets $\Theta_{cov}$ and $\Theta_{size}$.

\subsection{Minimality of condition (A2)} 

Let us demonstrate that condition (A2) is in some sense the minimal condition for providing a
sufficient $\mathrm{P}_{\theta_0}$-coverage of the $\mathrm{P}(\cdot|X)$-credible ball
with the sharpest rate.
\begin{proposition} 
\label{prop4}
For a DDM  $\mathrm{P}(\cdot|X)$ on $\Theta$ and a 
DD-center $\hat{\theta}$, let the ball $B(\hat{\theta},M\hat{r}_\kappa)$ 
be constructed according to (\ref{conf_ball}) with any $\kappa\in (0,1)$ and $M>0$.
Further, for a $\theta_0\in\Theta$ and a radial rate 
$\mathnormal{r}(\theta_0)$, denote  
$\psi_2(\delta)=\psi_2(\delta,\varepsilon,\theta_0)=
\mathrm{P}_{\theta_0} \big(d(\theta_0,\hat{\theta}) \le \delta
\mathnormal{r}(\theta_0) \big)$, 
$\alpha(\delta)=\alpha(\delta,\varepsilon,\theta_0)=
\mathrm{E}_{\theta_0} \big[\mathrm{P}(d(\theta,\hat{\theta}) > \delta 
\mathnormal{r}(\theta_0)|X) \big]$.
Then
\[
\mathrm{P}_{\theta_0}\big(\theta_0 \in B(\hat{\theta},M\hat{r}_\kappa)\big)
\le \psi_2(\delta M)+\alpha(\delta)\kappa^{-1} \quad \text{for any}\;\; \delta>0.
\]
\end{proposition}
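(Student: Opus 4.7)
The plan is to mimic the two-case split in the proof of Proposition~\ref{prop2a}, but with the roles of the ``small radius'' and ``large radius'' events interchanged, since here we are upper-bounding the coverage rather than lower-bounding it. First I would write
\begin{align*}
\mathrm{P}_{\theta_0}\big(\theta_0\in B(\hat\theta,M\hat r_\kappa)\big)
&= \mathrm{P}_{\theta_0}\big(d(\theta_0,\hat\theta)\le M\hat r_\kappa\big) \\
&\le \mathrm{P}_{\theta_0}\big(d(\theta_0,\hat\theta)\le M\hat r_\kappa,\; \hat r_\kappa \le \delta \mathnormal r(\theta_0)\big)
+ \mathrm{P}_{\theta_0}\big(\hat r_\kappa > \delta \mathnormal r(\theta_0)\big) \\
&\le \psi_2(M\delta) + \mathrm{P}_{\theta_0}\big(\hat r_\kappa > \delta \mathnormal r(\theta_0)\big),
\end{align*}
where the first summand is controlled by monotonicity on the event $\{\hat r_\kappa \le \delta \mathnormal r(\theta_0)\}$ and the very definition of $\psi_2$.

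For the remaining term, the key observation is that the infimum in the definition (\ref{radius}) of $\hat r_\kappa$ gives the inclusion $\{\hat r_\kappa > \delta \mathnormal r(\theta_0)\} \subseteq \{\mathrm{P}(d(\theta,\hat\theta) \le \delta \mathnormal r(\theta_0)|X) < 1-\kappa\}$, equivalently $\{\mathrm{P}(d(\theta,\hat\theta) > \delta \mathnormal r(\theta_0)|X) > \kappa\}$. Applying the Markov inequality to the $[0,1]$-valued random variable $\mathrm{P}(d(\theta,\hat\theta) > \delta \mathnormal r(\theta_0)|X)$ then yields
\[
\mathrm{P}_{\theta_0}\big(\hat r_\kappa > \delta \mathnormal r(\theta_0)\big) \le \frac{\mathrm{E}_{\theta_0}\big[\mathrm{P}(d(\theta,\hat\theta) > \delta \mathnormal r(\theta_0)|X)\big]}{\kappa} = \frac{\alpha(\delta)}{\kappa},
\]
and combining the two bounds delivers the stated inequality.

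The argument is essentially routine and dual to Proposition~\ref{prop2b}, so there is no real obstacle; the only mildly delicate point is matching the strict versus non-strict inequalities around the infimum $\hat r_\kappa$, which is handled by the fact that any $r<\hat r_\kappa$ must violate the DDM mass requirement $\mathrm{P}(d(\theta,\hat\theta)\le r|X)\ge 1-\kappa$. The resulting bound is informative precisely in the regime where $\delta$ is small, so if one could remove condition (A2) and still have both $\psi_2(M\delta)$ small (sharpness of the rate) and $\alpha(\delta)$ bounded, the $\mathrm{P}_{\theta_0}$-coverage of $B(\hat\theta,M\hat r_\kappa)$ would be forced to be small, which is the intended reading of (A2) as a minimal requirement.
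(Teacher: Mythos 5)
Your proposal is correct and follows essentially the same route as the paper's proof: the same decomposition over the events $\{\hat r_\kappa \le \delta\, \mathnormal{r}(\theta_0)\}$ and $\{\hat r_\kappa > \delta\, \mathnormal{r}(\theta_0)\}$, bounding the first term by $\psi_2(M\delta)$ and the second via the definition (\ref{radius}) of $\hat r_\kappa$ combined with the Markov inequality to get $\alpha(\delta)/\kappa$. Your handling of the strict versus non-strict inequality around the infimum is also fine (the paper states the intermediate event with ``$\le 1-\kappa$'', which is only a harmless loosening).
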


\begin{proof}  In view of  
the definition (\ref{conf_ball}), we derive
\begin{align*}
&\mathrm{P}_{\theta_0}\big(\theta_0 \in B(\hat{\theta},M\hat{r}_\kappa)\big) \\
&=
\mathrm{P}_{\theta_0}\big(\theta_0 \in B(\hat{\theta},M\hat{r}_\kappa), 
\hat{r}_\kappa \le \delta\mathnormal{r}(\theta_0)\big) 
+ \mathrm{P}_{\theta_0}\big(\theta_0 \in B(\hat{\theta},M\hat{r}_\kappa), 
\hat{r}_\kappa > \delta\mathnormal{r}(\theta_0)\big) \\
&\le
\mathrm{P}_{\theta_0}\big(d(\hat{\theta}, \theta_0) 
\le\delta M \mathnormal{r}(\theta_0)\big) 
+\mathrm{P}_{\theta_0}\big(\hat{r}_\kappa > \delta\mathnormal{r}(\theta_0)\big)\\
&\le \mathrm{P}_{\theta_0}\big(d(\hat{\theta}, \theta_0) 
\le\delta M \mathnormal{r}(\theta_0)\big) +
\mathrm{P}_{\theta_0} \big(\mathrm{P}(d(\hat{\theta},\theta)\le  
\delta\mathnormal{r}(\theta_0)|X) \le 1-\kappa\big) \\
&\le 
\mathrm{P}_{\theta_0}\big(d(\hat{\theta}, \theta_0) 
\le\delta M \mathnormal{r}(\theta_0)\big) +
\frac{\mathrm{E}_{\theta_0} \big(\mathrm{P}(d(\hat{\theta},\theta)>  
\delta\mathnormal{r}(\theta_0)|X\big)}{\kappa}\\
&\le \psi_2(\delta M)+\frac{\alpha(\delta)}{\kappa}. \qedhere
\end{align*}
\end{proof}
One should interpret this proposition as follows. First, given a DD-center
$\hat{\theta}$, we determine a local radial rate $\mathnormal{r}(\theta_0)$
such that $\psi_2(\delta)\le \bar{\alpha}(\delta)$ for all $0<\delta\le \delta_0$, for some 
``small''  $\bar{\alpha}(\delta)$. 
This describes the sharpest rate for estimating $\theta_0$ by $\hat{\theta}$.
Next,
$\alpha(\delta)$ being small for small $\delta$  means 
that the DDM $\mathrm{P}(\cdot|X)$ concentrates around $\hat{\theta}$
with  a faster rate than 
$\mathnormal{r}(\theta_0)$,
which  can be regarded as negation of condition (A2).
The above proposition says basically that, under negation of (A2) with the sharpest rate, 
the coverage probability of the credible ball $B(\hat{\theta},M\hat{r}_\kappa)$
is bounded from above. Thus, (A2) is the minimal condition if we want to have 
the sharpest  rate and a good coverage.
This quantifies the following simple intuitive idea:
if  the DDM $\mathrm{P}(\cdot|X)$
contracts  in the DD-center $\hat{\theta}$  faster than 
$\mathnormal{r}(\theta_0)$, then the resulting radius  of the credible 
ball $B(\hat{\theta},M\hat{r}_\kappa)$ 
is going to be of a smaller order than $\mathnormal{r}(\theta_0)$.
But this is going to be (over-optimistically) too small if 
the convergence rate of the center $\hat{\theta}$ to the truth $\theta_0$  is not faster than 
$\mathnormal{r}(\theta_0)$. Then the credible ball $B(\hat{\theta},M\hat{r}_\kappa)$
will clearly miss the truth with some probability bounded away from zero.

\subsection{Inverse and direct Gaussian sequence models}

Model (\ref{model}) is known to be the sequence version of 
the \emph{inverse signal-in-white-noise model}.
This model captures many of the conceptual issues 
associated with nonparametric estimation, with a minimum of technical complication.
Gaussian white noise models are of a canonical type of model 
which serves as a purified approximation to some other statistical models such as 
nonparametric regression model, density estimation, spectral function estimation, 
by virtue of the so called \emph{equivalence principle}. 
The statistical inference results for the generic model (\ref{model}) can 
be conveyed to other models, according to this equivalence principle.
However, in general the problem of establishing the equivalence 
in a precise sense is a delicate task.  Below we outline the relations with some other models.
 
Let $\mathbb{H}$, $\mathbb{G}$ be two separable Hilbert spaces and $A$ be a continuous 
operator $A: \, \mathbb{H} \to \mathbb{G}$. Suppose we observe 
\[
Y=Af +\varepsilon \xi,
\]
where $\varepsilon>0$ is the noise level, $\xi$ is Gaussian white noise on $\mathbb{G}$,
i.e., $\langle \xi, g \rangle \sim N(0,\|g\|^2)$ and $\text{Cov} (\langle \xi, g \rangle, 
\langle \xi, g'\rangle)= \langle g, g' \rangle$ for any $g,g' \in \mathbb{G}$, $\|\cdot\|$ and
$\langle \cdot, \cdot \rangle$ denote the norm and scalar product in $G$.
The goal is to recover $f\in \mathbb{H}$. 
Suppose that $A^*A$ ($A^*$ stands for the adjoint of $A$) is a compact operator so that 
it has a complete orthonornal system of eigenvectors $\{\phi_i, \, i \in \mathbb{N}\}$  in $\mathbb{H}$ 
with corresponding eigenvalues $\lambda_i>0$, i.e.,  $A^*A\phi_i = \lambda_i \phi_i$.
Then $\{\psi_i, \, i \in \mathbb{N}\}$, with $\psi_i =\lambda_i^{-1/2} A\phi_i$, is an orthonormal 
basis  in $\mathbb{G}$, and $A^*\psi_i = \lambda^{-1/2}_i A^*A\phi_i =\lambda^{1/2} \phi_i$.
Now, with $\theta_i=\langle f, \phi_i\rangle$, we have 
$A^*Af = A^*A \sum_i \theta_i \phi_i  = 
\sum_i \lambda_i \theta_i \phi_i$, so that
$\langle Af, \psi_i\rangle = \lambda^{-1/2}_i \langle Af, A\phi_i \rangle =
 \lambda^{-1/2}_i \langle A^*Af, \phi_i\rangle =\lambda^{1/2}_i \theta_i$.
Then the Fourier coefficient of $Y$ with respect to $\{\psi_i, \, i \in \mathbb{N}\}$ are 
$Y_i = \langle Y, \psi_i\rangle = \langle Af, \psi_i\rangle+ \varepsilon \langle \xi, \psi_i \rangle 
= \lambda^{1/2}_i \theta_i +\varepsilon \xi_i$, or 
\[
X_i = \theta_i+\sigma_i \xi_i, \quad i \in \mathbb{N},
\]
where $X_i = \lambda^{-1/2}_i Y_i$, $\sigma_i =  \lambda^{-1/2}_i \varepsilon$ 
and $\xi_i$'s are independent $N(0,1)$ random variables.
We thus obtained the inverse signal-in-white-noise 
model (\ref{model}), more details can be found in \cite{Cavalier:2008sup}.

For the remainder of this section, we consider the direct case $\kappa^2_i=1$ 
of model (\ref{model}). This model can also be derived from 
the generalized linear Gaussian model as introduced 
in \cite{Birge&Massart:2001sup}: for some separable Hilbert space $\mathbb{H}$ 
with scalar product $\left\langle \cdot, \cdot \right\rangle$,
\[
Y^{(\varepsilon)}(x)=\left\langle y,x \right\rangle +\varepsilon W(x), \quad x \in \mathbb{H},
\]
where $W$ is a so called isonormal process; 
see the exact definition in \cite{Birge&Massart:2001sup}.
Take any orthonormal basis $\{b_i, \, i \in \mathbb{N}\}$ in $\mathbb{H}$ 
and consider $X_i=Y^{(\varepsilon)}(b_i)$, $i \in \mathbb{N}$, 
to reduce the above model to (\ref{model}).

The following model is known as the \emph{white noise model}. 
We observe a stochastic process 
$Y^{(\varepsilon)}(t)$, $t\in[0,1]$, satisfying the stochastic differential equation
\[
dY^{(\varepsilon)}(t) =f(t)dt +\varepsilon dW(t), \quad t\in [0,1], 
\]where $f \in \mathbb{L}_2([0,1])$ 
is an unknown signal and $W$ is a standard Brownian
motion which represent the noise of intensity $\varepsilon$. 
If $\{b_i(t), \, i \in \mathbb{N}\}$ is an orthonormal basis in $\mathbb{L}_2([0,1])$, 
then the white noise model  can be translated into direct version of model
(\ref{model}) with observations $X_i=\int_0^1b_i(t)dY^{(\varepsilon)}(t)$ and 
parameter $\theta_i=\int_0^1b_i(t) f(t)dt$, $i\in\mathbb{N}$. 
 
As the last related example,  we mention the discrete regression model:
\begin{equation}
\label{regr_model}
Y_i= f(x_i)+ e_i, \quad i\in\mathbb{N}_n, 
\end{equation}
where  $e_i$'s are independent $N(0,\sigma^2)$, $x_i\in[0,1]$ are 
deterministic distinct points and $f(t)$ 
is an unknown function.  Let $Y=(Y_1, \ldots, Y_n)^T$, 
$f=(f(x_1),\ldots, f(x_n))^T$, 
$\{b_1,\ldots, b_n\}$ be an orthonormal (column) basis of $\mathbb{R}^n$,
$W=(b_1, \ldots,b_n)^T$.
Denote 
\begin{equation}
\label{W_transform}
X=n^{-1/2}WY, \quad\theta=n^{-1/2}Wf, \quad \varepsilon=n^{-1/2}
\end{equation}
to reduce (\ref{regr_model}) again to the direct version of model (\ref{model}),
with the convention that $\theta=(\theta_1,\ldots,\theta_n,0,0\ldots)$ in (\ref{model}) 
has now zero coordinates starting from $(n+1)$-th position.
Clearly,  $\|\tilde{\theta}-\theta\|^2 =n^{-1} \|\tilde{f}-f\|^2$
for $\tilde{\theta}=n^{-1/2}W\tilde{f}$.

If $x_i=i/n$, $n=2^{J+1}$ and $f(t)\in\mathbb{L}_2([0,1])$ in (\ref{regr_model}), 
we can choose a convenient wavelet basis (of regularity $r>0$) 
in $\mathbb{L}_2([0,1])$ and apply 
the corresponding discrete wavelet transform $W$ in (\ref{W_transform})
to the original data $Y$. Assume that the original curve 
$f$ belongs to a certain scale of Besov  balls 
(from Besov space $B^s_{p,q}$, with $\max\{0,1/p-1/2\} <s<r$, $p,q\ge 1$) from 
$\mathbb{L}_2([0,1])$, that include among others H\"older ($B^s_{\infty,\infty}$)
and Sobolev ($B^s_{2,2}$) classes of smooth functions.
Then the corresponding noiseless discrete wavelet transform 
$n^{1/2} \theta= Wf$ belongs to the corresponding scale of 
Besov balls in $\ell_2$. There is a dyadic indexing of vector $n^{1/2} \theta$, but 
it can be reduced to the (direct) setting of (\ref{model}) by an appropriate ordering; 
the details are nicely explained in \cite{Birge&Massart:2001sup}.

To give an idea how, according to the equivalence principle,  
the results for the model (\ref{model}) can be conveyed to other
(equivalent) models, let us outline a possible approach to the discrete regression model 
(\ref{regr_model}): 
\begin{itemize}
\item[1)] consider the discrete regression model 
(\ref{regr_model}) and assume that the unknown signal $f$ belongs
to a Besov ball $B_{p,q}^s(Q)$ with an unknown smoothness $s$; 
\item[2)] apply a discrete wavelet transform, as in (\ref{W_transform}), to the data $Y=(Y_i,\, i \in\mathbb{N}_n)$
from (\ref{regr_model}) to obtain the data $X$ of form (\ref{model}); 
\item[3)] construct the DDM $\mathrm{P}(\theta|X)$ (\ref{ddm1}), obtain all the results for it
in terms of the data $X$;
\item[4)] by (\ref{W_transform}), transform the DDM $\mathrm{P}(\theta|X)$ to the DDM $\mathrm{P}(f|Y)$ 
for the signal $f$, now in terms of the data $Y$ from (\ref{regr_model});
\item[5)] by equivalence of the norms for $\theta$ and $f$, obtain the results for the 
DDM $\mathrm{P}(f|Y)$ from the results for the DDM $\mathrm{P}(\theta|X)$.
\end{itemize}
For example the resulting DDM $\mathrm{P}(f|Y)$ will concentrate around the true $f_0$ 
from the $\mathrm{P}_{f_0}$-perspective at least with the optimal minimax rate 
corresponding to the smoothness $s$. It will take a fair piece of effort to implement 
this outlined approach in details, but conceptually it is a straightforward matter.

\subsection{Checking conditions (\ref{condition_sigma}) for the mildly ill-posed case}

Consider conditions (\ref{condition_sigma}) for the mildly ill-posed case 
$\kappa^2_i=i^{2p}$. As $\sigma^2_i =\varepsilon^2 \kappa^2_i$, these are equivalent 
to the same conditions for the sequence $\kappa^2_i=i^{2p}$. 
In these notations, conditions (\ref{condition_sigma}) can be rewritten as follows:
for any $\rho, \gamma>0$, $\tau_0 >1$, there exist some positive $K_1$, $K_2=K_2(\rho)$, 
$K_3=K_3(\gamma)$, $K_4 \in (0,1)$, $\tau>2$  and $K_5=K_5(\tau_0)$ such that  
\begin{align*}
&(i)\; n^{2p+1} \le  K_1 \sum_{i=1}^n i^{2p}, 
\quad (ii)\; \sum_{i \le \rho n} i^{2p}\le  K_2
\sum_{i=1}^n i^{2p}, \\
& (iii)\;
\sum_{n=1}^\infty e^{-\gamma n} \Big(\sum_{i=1}^{n} i^{2p} \Big) \le K_3, \quad
(iv)\;
\sum_{i=1}^{\lfloor m/\tau \rfloor} i^{2p} \le (1-K_4) 
\sum_{i=1}^m i^{2p},\\
&(v)\;
l \lfloor l/\tau_0\rfloor^{2p} \ge K_5\sum_{i=\lfloor l/\tau_0 \rfloor+1}^l i^{2p}, 
\end{align*}
hold for all $n\in\mathbb{N}$, all $m \ge \tau$ and all $l \ge \tau_0$.

Let us derive the constants $K_1,K_2,K_3, K_4,\tau, K_5$  
for the  mildly ill-posed case $\kappa_i^2 = i^{2p}$, 
$p\ge 0$.
First, we recall elementary relations:
\begin{align}
\label{sum_i^2p}
\frac{n^{2p+1}}{2p+1} =\int_0^n x^{2p} dx \le \sum_{i=1}^n i^{2p}
\le \int_0^{n+1} x^{2p}dx = \frac{n^{2p+1}(1+\frac{1}{n})^{2p+1}}{2p+1}.
\end{align}

(i) From (\ref{sum_i^2p}) it follows 
$\frac{n^{2p+1}}{2p+1} \le \sum_{i=1}^n i^{2p}$,
so that $K_1 =2p+1$.

(ii) In view of (\ref{sum_i^2p}), we have 
\[
\sum_{i \le \rho n} i^{2p} \le \frac{(\rho n +1)^{2p+1}}{2p+1} 
\le \frac{n^{2p+1}(\rho+n^{-1})^{2p+1}}{2p+1}\le 
(\rho+1)^{2p+1} \sum_{i=1}^n i^{2p},
\]
so that 
$K_2 = (\rho+1)^{2p+1}$.

(iii) Using (\ref{sum_i^2p}) and and the fact that $\max_{u\ge 0} (e^{-\gamma u} u^p)
=e^{-p} (p/\gamma)^p$, we evaluate 
\begin{align*}
\sum_n e^{-\gamma n} \Big(\sum_{i=1}^{n} i^{2p} \Big)  
&\le  \frac{2^{2p+1}}{2p+1} \sum_n e^{-\gamma n} n^{2p+1}\\
&\le \frac{2^{2p+1}\max_{u\ge 0} (e^{-\gamma u/2} u^{2p+1})}{2p+1}
\sum_n e^{-\gamma n/2} \\
&\le 
\frac{2^{2p+1} e^{-(2p+1)}(2p+1)^{2p+1}}{(2p+1)(\gamma/2)^{2p+1}}
\sum_n e^{-\gamma n/2} \\
&= \frac{4^{2p+1}(2p+1)^{2p}}{(e\gamma)^{2p+1}(e^{\gamma/2}-1)}, 
\end{align*}
that is, $K_3 =\frac{4(8p+4)^{2p}}{(e\gamma)^{2p+1}(e^{\gamma/2}-1)}$.

(iv) 
Denote for brevity $m_\tau=  \lfloor m/\tau\rfloor$. 
Using (\ref{sum_i^2p}), 
\begin{align*}
\sum_{i=1}^{m_\tau} i^{2p}  &\le
\frac{m_\tau^{2p+1}(1+\tfrac{1}{m_\tau})^{2p+1}}{2p+1}
\le 
\frac{m^{2p+1}(\tfrac{2}{\tau})^{2p+1}}{2p+1} \\
&\le (\tfrac{2}{\tau})^{2p+1} \sum_{i=1}^m  i^{2p}
\le \tfrac{1}{2} \sum_{i=1}^m i^{2p}
\end{align*}
if $(\tfrac{2}{\tau})^{2p+1} \le \frac{1}{2}$, or $\tau \ge 2^{1+1/(2p+1)}$.
Thus, we obtained $K_4=\frac{1}{2}$ and $\tau$ can be any number 
satisfying $\tau \ge 2^{1+1/(2p+1)}$.

(v) Evaluate  
\begin{align*}
\sum_{i=\lfloor l/\tau_0 \rfloor+1}^l i^{2p} 
&\le  l^{2p+1} \le l \big(\tau _0 \lfloor l/\tau_0\rfloor+\tau_0\big)^{2p} \\
&\le  l \big(\tau_0 \lfloor l/\tau_0\rfloor \big)^{2p} 
\Big(1+ \frac{1}{\lfloor l/\tau_0\rfloor} \Big)^{2p} \le l \lfloor l/\tau_0\rfloor^{2p} (2\tau_0)^{2p},
\end{align*} 
so that $K_5 = (2\tau_0)^{-2p}$.

\subsection{Proof of Theorem \ref{th2}} 
\begin{proof}[Proof of Theorem \ref{th2}]
The proof of this theorem is essentially contained in the proof of Theorem \ref{th1}.
First recall that, according to (\ref{tilde_estimator}),
$\tilde{\theta}= \mathrm{E}(\theta|X) = \sum_I X(I) \mathrm{P}(\mathcal{I}=I|X)$,
with $X(I) = \{X_i(I), \, i \in \mathbb{N}\} = \{ X_i \mathrm{1}\{i\le I\}, \, i\in\mathbb{N}\}$.
Now, by the Fubini theorem and the Cauchy-Schwarz inequality,
\begin{align*}
\mathrm{E}_{\theta_0} \|\tilde{\theta} -\theta_0\|^2
&=\mathrm{E}_{\theta_0}\sum_i 
\Big( \sum_I X_i(I) \mathrm{P}(\mathcal{I}=I|X) -\theta_{0,i} \Big)^2 
\\&
\le
\mathrm{E}_{\theta_0}\sum_i \sum_I
\big(X_i(I)  -\theta_{0,i} \big)^2 \mathrm{P}(\mathcal{I}=I|X) 
\\&=
\mathrm{E}_{\theta_0}\sum_I \|X(I)-\theta_{0}\|^2 \mathrm{P}(\mathcal{I}=I|X)\\
&=
\mathrm{E}_{\theta_0}\sum_I \Big(\sum_{i\le I}\sigma^2_i\xi_i^2 +
\sum_{i>I}\theta_{0,i}^2\Big) \mathrm{P}(\mathcal{I}=I|X)\\
&\le M^2 \mathnormal{r}^2(\theta_0) \mathrm{E}_{\theta_0}(T_1+T_2 +T_3),
\end{align*}
where $T_1,T_2,T_3$ are defined in (\ref{sum_of_Ts}). In the last step of 
the proof of Theorem \ref{th1}, it is established that  
$\mathrm{E}_{\theta_0}(T_1+T_2 +T_3) \le \frac{C_{or}}{M^2}$.  
The theorem follows with the constant $C_{est}=C_{or}$. 
\end{proof}

The local rate $\mathnormal{r}(I,\theta_0)$ defined by (\ref{local_risks})
is also the $\ell_2$-risk of the projection 
estimator $\hat{\theta}(I)= X(I)$:
$\mathrm{E}_{\theta_0}\|\hat{\theta}(I) -\theta_0\|^2 = 
\mathnormal{r}^2(I,\theta_0)$.
One can regard the oracle rate (\ref{oracle_risk}) 
as the smallest possible risk over the  family of (projection) estimators 
$\hat{\Theta}(\mathbb{N})=\{\hat{\theta}(I), \, I\in\mathbb{N}\}$, namely
\[
\mathnormal{r}^2(\theta_0)=\mathnormal{r}^2(I_o,\theta_0) = \inf_{ I\in\mathbb{N}}
\mathrm{E}_{\theta_0}\|\hat{\theta}(I) -\theta_0\|^2=
\mathrm{E}_{\theta_0}\|\hat{\theta}(I_o ) -\theta_0\|^2.
\]
Theorem \ref{th2} claims basically that the estimator $\tilde{\theta}$  given by (\ref{tilde_estimator})
\emph{mimics the projection oracle estimator $\hat{\theta}(I_o )$}, which is, strictly speaking, not 
an estimator as it depends on the true $\theta_0$ through $I_o=I_o(\theta_0)$.



\subsection{Notion of covering by a local rate} 

%
%

Recall that all the quantities involved depend 
on the information parameter $\varepsilon$, but we skip this dependence here.
Suppose we have a family of local rates $\mathcal{R}(\mathcal{A})=
\{\mathnormal{r}(\alpha,\theta),\, \alpha \in \mathcal{A}\}$, e.g.,
in our case the family defined by (\ref{local_risks}) with $\mathcal{A} = \mathbb{N}$.
Let $\mathnormal{r}(\theta)=\inf_{\alpha \in \mathcal{A}} 
\mathnormal{r}(\alpha,\theta)$ be the smallest local rate  over 
$\mathcal{R}(\mathcal{A})$,  called the \emph{oracle rate}. 
If  $\mathnormal{r}(\theta)= \mathnormal{r}(\alpha_o,\theta)$ for some 
$\alpha_o = \alpha_o(\theta) \in \mathcal{A}$, we call this value \emph{oracle}. 

We say that the family 
$\mathcal{R}(\mathcal{A})$ \emph{covers} a scale 
$\Theta(\mathcal{B})=\{\Theta_\beta, \, \beta\in\mathcal{B}\}$
with the corresponding family of minimax rates $\{R(\Theta_\beta),\,\beta\in\mathcal{B}\}$
if for any $\beta\in\mathcal{B}$ there 
exists an $\alpha=\alpha(\beta)\in \mathcal{A}$ such that 
$\mathnormal{r}(\alpha(\beta),\theta)  \le c R(\Theta_\beta)$ for all
$\theta\in\Theta_\beta$ and some uniform $c$. 
Basically, this means that the family $\mathcal{R}(\mathcal{A})$ is rich 
enough to contain the minimax rates over the whole scale
$\Theta(\mathcal{B})$. Then, 
for all $\beta \in \mathcal{B}$,
\[
\mathnormal{r}(\theta) 
\le c R(\Theta_\beta) \;\;
\text{for all} \;\; \theta \in \Theta_\beta, \quad
 \text{so that} \quad
\sup_{\theta\in\Theta_\beta}\mathnormal{r}(\theta) 
\le c R(\Theta_\beta),
\]
which is the property (\ref{local_impl_global}).
If the above property holds for some local rate $\mathnormal{r}(\theta)$ 
(not necessarily associated with some family of rates), 
we say that the \emph{local rate $\mathnormal{r}(\theta)$ covers 
$\Theta(\mathcal{B})$}. As we already discussed in the paper,
the local results with a local radial rate $\mathnormal{r}(\theta)$  
imply the global  minimax results for all scales which are covered 
by the radial rate $\mathnormal{r}(\theta)$. 
Therefore, in order to motivate the obtained local results, one needs to ensure
this property at least for some interesting scales. 

We can extend the idea of \emph{covering} 
to two different families of local  rates.   
We say that a family of local rates $\mathcal{R}_1(\mathcal{A})=
\{\mathnormal{r}_1(\alpha,\theta),\, \alpha \in \mathcal{A}\}$
\emph{covers} another family of local rates $\mathcal{R}_2(\mathcal{B})
=\{\mathnormal{r}_2(\beta,\theta),\, \beta \in \mathcal{B}\}$
over some $\Theta_0$
if for each $\theta\in\Theta_0$ and $\beta\in \mathcal{B}$ there exists 
an $\alpha=\alpha(\theta,\beta)$ such that 
for some uniform constant $c=c(\Theta_0,\mathcal{A},\mathcal{B})$
\[
\mathnormal{r}_1(\alpha,\theta)
\le c  \mathnormal{r}_2(\beta,\theta).
\]
This leads of course to the relation between the oracle rates: $\mathnormal{r}_1(\theta) \le 
\mathnormal{r}_2(\theta)$ for all $\theta\in \Theta_0$.
If $\Theta_0$ contains the set of interest (e.g., $\Theta_0=\Theta$ is the whole space), 
then clearly a DDM-contraction result with the oracle rate over the family 
$\mathcal{R}_1(\mathcal{A})$ will immediately imply the 
DDM-contraction result with the oracle rate over the family 
$\mathcal{R}_2(\mathcal{B})$. 

For example, it can be easily shown that our family of local rates  
$\mathcal{R}(\mathbb{N})=
\{\mathnormal{r}(I,\theta),\, I \in \mathbb{N}\}$
defined by (\ref{local_risks}) covers the family of local radial rates
$\mathcal{R}_1(\mathbb{R}_+)=
\{\mathnormal{R}_{lin}(\lambda,\theta),\, \lambda\in\Lambda_1(\mathcal{R}_+)\}$, where
$\mathnormal{R}^2_{lin}(\lambda,\theta)=\sum_i \big[\sigma^2_i\lambda_i^2
+(1-\lambda_i)^2 \theta_i^2 \big]$ is the risk of the linear estimator 
$\hat{\theta}(\lambda) =(\lambda_i X_i, \, i \in \mathbb{N})$
with the weights $\lambda=( \lambda_i,\, i \in \mathbb{N})$,
and
\[
\Lambda_1(\mathcal{R}_+)= \Big\{\lambda(\beta)=( \lambda_i(\beta),\, i \in \mathbb{N}): \,
\lambda_i(\beta)= \frac{i^{-(2\beta+1)}}{\sigma^2_i+i^{-(2\beta+1)}},\, 
\beta \in \mathcal{R}_+\Big\}.
\]
This is the family of the risks of the minimax estimators over the Sobolev smoothness scale
$\{ \mathcal{E}_S(\beta,Q), \, \beta>0\}$, where  $\mathcal{E}_S(\beta,Q)$ is defined by 
(\ref{sob_ell}).
This is also the family of posterior convergence rates 
for the prior $\theta \sim \pi_\beta= \bigotimes _i N(0,i^{-(2\beta+1)})$;
cf.\  \cite{Szabo&etal:2015sup} and \cite{Belitser&Ghosal:2003sup} 
(for the direct case $\kappa^2_i=1$).

In fact, $\mathcal{R}(\mathbb{N})$ covers even the richer
family of local rates $\mathcal{R}_2(\Lambda_{mon}) =
\{\mathnormal{R}_{lin}(\lambda,\theta),\, 
\lambda\in \Lambda_{mon}\}$, where 
\begin{align}
\label{Lambda_mon}
\Lambda_{mon}=
\big\{\lambda=( \lambda_i,\, i \in \mathbb{N}): \,
\lambda_i\in[0,1], \, \lambda_i \ge \lambda_{i+1}, \, i \in \mathbb{N})\big\}.
\end{align}
This is the family of risks of the linear estimators $\hat{\theta}(\lambda)$,
with monotone weights $\lambda\in \Lambda$.
Indeed, for any $\lambda \in \Lambda_{mon}$ take 
$N_\lambda =\max\{i:\, \lambda_i \ge 1/2\}$ to derive
\begin{align*}
\mathnormal{R}^2_{lin}(\lambda,\theta) &= 
 \sum_i \big[ \sigma^2_i \lambda_i^2+(1-\lambda_i)^2 \theta_i^2\big] 
\ge\sum_{i \le N_\lambda}\frac{\sigma^2_i}{4} +
\sum_{i >N_\lambda+1} \frac{\theta_i^2}{4}\\
&=\frac{\mathnormal{r}^2(N_\lambda,\theta)}{4}
\ge  \frac{\mathnormal{r}^2(I_o,\theta)}{4}.
\end{align*}
Clearly, $\mathcal{R}_1(\mathbb{R}_+) \subset \mathcal{R}_2(\Lambda)$.
Besides, $ \mathcal{R}_2(\Lambda_{mon})$ contains also 
the family of risks of the minimax Pinskers estimators (which are 
asymptotically minimax  over Sobolev ellipsoids up to the constant)
and the family of risks of  the (minimax) Tikhonov regularization estimators, 
which correspond to spline estimators  in the problem of curve estimation. 
 
\subsection{Proof of (\ref{loc<global})} 

Recall the definitions (\ref{ellipsoids}) of ellipsoid $\mathcal{E}(a)$ 
and hyperrectangle $\mathcal{H}(a)$. 
First consider the hyperrectangles $\mathcal{H}(a)$. It follows from \cite{Donoho&etal:1990sup} that  
\begin{align*}
R^2(\mathcal{H}(a)) &= \inf_{\hat{\theta}}\sup_{\theta \in \mathcal{H}(a)} 
\mathrm{E}_\theta \|\hat{\theta}-\theta\|^2  \ge 
\frac{4}{5}\inf_b \sup_{\theta \in \mathcal{H}(a)} 
\mathrm{E}_\theta \|\tilde{\theta}(b)-\theta\|^2 \\
&=
\frac{4}{5} \sum\nolimits_i \frac{a_i^2 \sigma^2_i}{a_i^2+\sigma^2_i},
\end{align*}
where $\tilde{\theta}(b)=(\tilde{\theta}_i(b), \, \in \mathbb{N})$, 
$b=(b_i\in \mathbb{R},\, i \in \mathbb{N})$, is the class of linear estimators 
$\tilde{\theta}_i(b)=b_i X_i$. Take $N_a=\max\{i: \, \sigma^2_i \le a_i^2\}$, 
then for any $\theta_0 \in \mathcal{H}(a)$ (for some \emph{unknown} $a$) we have
\begin{align*}
\sum_i \frac{a_i^2 \sigma^2_i}{a_i^2+\sigma^2_i} 
&\ge  
\sum_{i\le N_a}\frac{\sigma^2_i}{2} +\sum_{i>N_a} \frac{a_i^2}{2} \ge 
\frac{1}{2} \inf_I 
\Big\{\sum_{i\le N_a}\sigma^2_i+\sum_{i>I} a_i^2\Big\} \\
&\ge
\frac{1}{2} \inf_I \Big\{\sum_{i\le I}\sigma^2_i+\sum_{i>I} \theta_{0,i}^2\Big\}
=\frac{\mathnormal{r}^2(\theta_0)}{2}.
\end{align*}
Combining the last two relation yields the second bound in (\ref{loc<global}).

Now  suppose $\theta_0\in \mathcal{E}(a)$ for some \emph{unknown} $a$. 
From \cite{Belitser&Levit:1995sup} it follows that  
for any $\theta_0\in \mathcal{E}(a)$ and some $\mathring{\lambda}
=(\mathring{\lambda}_i, \, i \in \mathbb{N})\in \Lambda_{mon}$  
($\Lambda_{mon}$ is defined by \eqref{Lambda_mon})):
\begin{align*}
R^2(\mathcal{E}(a)) &\ge 
\inf_\lambda \sup_{\theta \in \mathcal{E}(\pi a)} \mathnormal{R}_{lin}(\lambda,\theta) \\
&\ge \pi^{-2} \inf_\lambda \sup_{\theta \in \mathcal{E}(a)} \mathnormal{R}_{lin}(\lambda,\theta) 
=\pi^{-2}  \sup_{\theta \in \mathcal{E}(a)} \mathnormal{R}_{lin}(\mathring{\lambda},\theta) \\
&=
\pi^{-2}  
\sup_{\theta \in \mathcal{E}(a)}  \sum_i 
\big[\mathring{\lambda}^2_i \sigma^2_i+ (1-\mathring{\lambda}_i)^2 \theta^2_i\big]\\
&\ge
\pi^{-2}\Big[\sum_{i:\, \mathring{\lambda}_i \ge 1/2} \frac{\sigma^2_i}{4} + 
\sup_{\theta \in \mathcal{E}(a)} \sum_{i: \mathring{\lambda}_i <1/2} \frac{\theta^2_i}{4} \Big]\\
&\ge 
(2\pi)^{-2} \Big(\sum_{i\le N_{\mathring{\lambda}}} \sigma^2_i + 
 a^2_{N_{\mathring{\lambda}}+1}  \Big)
\ge (2\pi)^{-2}
\inf_I \Big\{\sum_{i\le I} \sigma^2_i + a^2_{I+1}\Big\}\\
&\ge 
(2\pi)^{-2}\ \inf_I\Big\{\sum_{i\le I}\sigma^2_i+ \sum_{i>I} \theta_{0,i}^2\Big\} 
=(2\pi)^{-2}\mathnormal{r}^2(\theta_0),
\end{align*}
which leads to the first bound in (\ref{loc<global}).

The exact form of weights $\mathring{\lambda}\in \Lambda_{mon}$ 
is not important, but we just remind here that these are the so called 
Pinsker optimal weights (cf.\ \cite{Pinsker:1980sup}): 
$\mathring{\lambda}_i = (1-\mathring{\mu}/a_i)_+$, where $x_+ = x\vee 0$
and $\mathring{\mu}=\mathring{\mu}(\sigma,a)$ is the unique solution of
the equation  
\[
\sum_i \sigma^2_i (1-\mathring{\mu}/a_i)_+ /(a_i\mathring{\mu}) =1.
\] 
The constant $(2\pi)^{-2}$ is actually too conservative. For example, for the direct case 
$\kappa^2_i =1$, it follows from \cite{Donoho&etal:1990sup} 
(see also Proposition 3 in \cite{Birge&Massart:2001sup}) that  
\begin{align*}
R^2(\mathcal{E}(a)) &= \inf_{\hat{\theta}}\sup_{\theta \in \mathcal{E}(a)} 
\mathrm{E}_\theta \|\hat{\theta}-\theta\|^2 \ge (4.44)^{-1} 
\inf_I\{I\varepsilon^2+a^2_{I+1}\} \\
&\ge (4.44)^{-1} \inf_I\Big\{I \varepsilon^2+ \sum_{i>I} \theta_{0,i}^2\Big\}
\ge  (4.44)^{-1} \mathnormal{r}^2(\theta_0). 
\end{align*} 

Since $R^2(\mathcal{E}(a)) \le \inf_I \sup_{\theta \in \mathcal{E}(a)} 
\mathrm{E}_\theta \|X(I)-\theta\|^2  = \inf_I \big\{\sum_{i\le I}\sigma^2_i+a^2_{I+1}\big\}$
and $R^2(\mathcal{H}(a)) \le \inf_I \sup_{\theta \in \mathcal{H}(a)} 
\mathrm{E}_\theta \|X(I)-\theta\|^2  =\inf_I \big\{\sum_{i\le I}\sigma^2_i+\sum_{i>I} a_i^2\big\}$, 
we conclude that 
\begin{align}
\label{minimax_rates}
R^2(\mathcal{E}(a)) \asymp  \inf_I \Big\{\sum_{i\le I}\sigma^2_i+a^2_{I+1}\Big\}, \;
R^2(\mathcal{H}(a))  \asymp \inf_I \Big\{\sum_{i\le I}\sigma^2_i+\sum_{i>I} a_i^2\Big\}.
\end{align}




\subsection{Other choices for DDM, over-shrinkage effect} 
\label{rem_choice}

Notice that we do observe the Bayesian tradition as our DDM $\mathrm{P}(\cdot|X)$ 
defined by (\ref{ddm1}) results from certain empirical Bayes posterior. 
However, in principle we can manipulate with different ingredients in constructing 
DDMs: different choices for $\mathrm{P}_I(\cdot|X)$ and $\mathrm{P}(\mathcal{I}=I|X)$ 
in (\ref{ddm1}) are possible,  not necessarily coming from the (same) Bayesian approach.

In (\ref{tau_i}), we could take
$\tau_i^2(I) =
K_1 \sigma^2_i \mathrm{1}\{i\le I\}+ 
K_2 \sigma^2_i \mathrm{1}\{i> I\}$ for 
some $0\le K_2< K_1$ (the choice in (\ref{tau_i}) is a particular case with $0=K_2<K_1=K$), 
to possibly improve constants in the main results by choosing appropriate $K_2$,
but this would further complicate the expressions 
without gaining anything conceptually.  

\paragraph{Empirical Bayes posterior with respect to $I$}
One more choice for DDM within Bayesian tradition is the
empirical Bayes posterior $\hat{\mathrm{P}}(\cdot|X)$ with respect to $I$
introduced by (\ref{ddm2}) in Remark \ref{rem_ddm2}. We remind its definition:
\begin{align*}
\hat{\mathrm{P}}(\cdot|X)=\mathrm{P}_{\hat{I}}(\cdot|X),\quad \text{with} \quad
\hat{I}=\min\Big\{\arg\!\max_{I\in\mathbb{N}} \mathrm{P}(\mathcal{I}=I|X) \Big\},
\end{align*}
where $\mathrm{P}_I(\cdot|X)$ and $\mathrm{P}(\mathcal{I}=I|X)$ 
are defined by respectively (\ref{measure_P_I}) and (\ref{p(I|X)}). The
$\arg\!\max$ gives a subset of $\mathbb{N}$ in general, $\hat{I}$ is 
the smallest element in this set.

Let us demonstrate that the DDM $\hat{\mathrm{P}}(\cdot|X)$
has exactly the same properties as the DDM $\mathrm{P}(\cdot|X)$ 
defined by (\ref{ddm1}). 
By the definition of $\hat{I}$, we derive that, for any $I,I_0 \in \mathbb{N}$ 
and any $ h\in[0,1]$,
\[
\mathrm{P}_{\theta_0}(\hat{I} =I) \le 
\mathrm{P}_{\theta_0} \Big(\frac{\mathrm{P}(\mathcal{I}=I|X)}
{\mathrm{P}(\mathcal{I}=I_0|X)} \ge 1\Big) \le 
\mathrm{E}_{\theta_0} \Big[ \frac{\mathrm{P}(\mathcal{I}=I|X)}
{\mathrm{P}(\mathcal{I}=I_0|X)} \Big]^h, 
\]
which yields the analogue of (\ref{th1_lem3}). From this point on,
the proof of the properties of the DDM $\hat{\mathrm{P}}(\cdot|X)$
proceeds exactly in the same way as the proof for
the DDM $\mathrm{P}(\cdot|X)$ defined by (\ref{ddm1}),
with the only difference that everywhere (in the claims and in the proofs), 
$\mathrm{1}\{\hat{I}=I\}$ is substituted instead of $\mathrm{P}(\mathcal{I}=I|X)$
and $\mathrm{P}_{\theta_0} (\hat{I} = I)$ 
is substituted instead of $\mathrm{E}_{\theta_0} \mathrm{P}(\mathcal{I} =I|X)$.

An interesting connection of this DDM to penalized estimators is discussed 
in Subsection \ref{subsec_connect}. 

\paragraph{Other choices for $\mathrm{P}_I(\cdot|X)$}
For example, if we only were interested in the upper bound result (Theorem \ref{th1}) 
for the resulting $\mathrm{P}(\cdot|X)$ (\ref{ddm1}),
instead of the DDM (\ref{measure_P_I}) we could use
$\mathrm{P}_I(\cdot|X) =\bigotimes_i N(X_i\mathrm{1}\{i\le I\}, 
\sigma_i^2(I) \})$ with any variances $\sigma_i^2(I) $ such that 
$\sum_i \sigma_i^2(I)  \le  C\sum_{i\le I} \sigma^2_i$ 
for some $C>0$. Even the degenerate DDM  $\mathrm{P}_I(\cdot|X)$ with 
$\sigma^2_i(I)=0$ (or $L=0$ in (\ref{measure_P_I})) 
would lead to the oracle DDM-contraction rate.  
On the other hand, this choice would however make the lower 
bound result (Theorem \ref{th2a_posterior}) impossible to hold.  
In fact, non-normal distributions in the construction of the DDMs
$\mathrm{P}_I(\cdot|X)$ are also possible as we only use 
the Markov inequality when dealing with $\mathrm{P}_I(\cdot|X)$, 
just the right choice of the first two moments would be sufficient
for the upper bound result.
However, when proving the lower bound result, Theorem \ref{th2a_posterior},
we need to deal with a small ball probability, which is a relatively well studied problem for 
the Gaussian distribution. For a non-normal case, one would first have to derive 
small ball probability results.

\paragraph{Other choices for $\mathrm{P}(\mathcal{I}=I|X)$}
Instead of the mixing DDM $\mathrm{P}(\mathcal{I}=I|X)$ (\ref{p(I|X)}) in (\ref{ddm1}), 
the main results would also hold for the following DDM: 
\begin{equation}
\label{P(I|x)}
\Pi'(\mathcal{I}=I|X) = 
\frac{\lambda_I \bigotimes_i  \varphi(X_i,0, \sigma^2_i+\tau_i^2(I))}
{\sum_J  \lambda_J \bigotimes_i  \varphi(X_i,0, \sigma^2_i+\tau_i^2(J))},
\quad I\in\mathbb{N},
\end{equation}
with $\tau^2_i(I)$ and $\lambda_I$ defined by (\ref{tau_i}).
The DDM $\Pi'(\mathcal{I}=I|X)$ defined by (\ref{P(I|x)}) is nothing else but
the posterior probability of $\mathcal{I}$  with respect to the prior  
(\ref{prior_pi}) with $\mu_i(I)=0$ for all $i,I\in\mathbb{N}$; we denote this prior by $\Pi'$.
The right hand side of (\ref{P(I|x)}) means the $\mathrm{P}_{\theta_0}$ almost sure limit  
\begin{align*}
\Pi'(\mathcal{I}=I|X) &=\lim_{m\to\infty}\Pi'(\mathcal{I}=I|X_1,\ldots X_m)\\
&=
\lim_{m\to \infty} \frac{ \lambda_I\bigotimes_{i=1}^m \varphi(X_i,0, \sigma^2_i+\tau_i^2(I))}
{\sum_J  \lambda_J\bigotimes_{i=1}^m  \varphi(X_i,0, \sigma^2_i+\tau_i^2(J))},
\end{align*}
which exists by the martingale convergence theorem.

\paragraph{``Over-shrinkage'' effect of (mixtures of) normal priors}
Although the prior  $\Pi'$ (the prior  defined by
(\ref{prior_pi}) with $\mu_i(I)=0$ for all $i,I\in\mathbb{N}$) leads to the 
``correct'' posterior (\ref{P(I|x)}) on $I$ (in the sense that it can be used 
instead of the DDM (\ref{p(I|X)}) in (\ref{ddm1})),
it yields the ``over-shrunk'' resulting posterior on $\theta$. Indeed,  
\begin{align}  
\label{measure_P_I2}      
\Pi'_I(\cdot |X) =\Pi'(\cdot| X, \mathcal{I} =I)=
\bigotimes\nolimits_i N\big(LX_i(I), L\sigma^2_i\mathrm{1}\{i\le I\}\big),
\end{align} 
with $L=\frac{K}{K+1}<1$, so that
the actual resulting posterior of $\theta$
\begin{align}  
\label{posterior_Pi'}     
\Pi'(\cdot|X)=
\sum\nolimits_I\Pi'_I(\cdot|X)\Pi'(\mathcal{I}=I|X)
\end{align}
contracts, from the $\mathrm{P}_{\theta_0}$-perspective, 
to $L\theta_0$ and not to $\theta_0$.
This has to do with the \emph{shrinkage effect} of (mixtures of) normal priors 
towards the prior mean, which is inherent to the normal-normal model. 
This has already been observed in \cite{Johnstone&Silverman:2004sup},
and discussed at length by \cite{Babenko&Belitser:2010sup} and 
\cite{Castillo&vanderVaart:2012sup}. The approaches in the first and 
third papers are based on (mixtures of) heavy-tailed priors instead of normal. 
A related approach is to add one more level of hierarchy in (\ref{prior_pi}) 
by putting a heavy-tailed prior on variances $\tau^2_i(I)$.
This will of course again destroy the normal conjugate structure of the prior,
whereas normal/mixture-of-normals model has an advantageous feature that all the quantities 
involved can be explicitly computed and controlled. 

Basically, a ``correct'' DDM $\Pi'_I(\cdot|X)$ in  the expression (\ref{posterior_Pi'}) 
should be of the form  $\Pi'_I(\cdot |X)=\bigotimes\nolimits_i 
N\big(X_i(I), L\sigma^2_i\mathrm{1}\{i\le I\}\big)$ for any $L>0$.
Within the DDM methodology, one can, in principle, adjust
the posterior (\ref{measure_P_I2}) by blowing it up (by the factor $L^{-1}$) 
or by shifting it (by the factor $(1-L)X(I)$), or one can simply use the DDM 
(\ref{measure_P_I}) instead of (\ref{measure_P_I2}). However, such 
manipulations with posteriors are not done by the 
committed Bayesians. If one insists on normal mixture prior  and wants to get 
a correct posterior (\ref{measure_P_I2}), the only way to achieve this is to 
take the prior variances $\tau^2_i \gg \sigma^2_i$, 
in the asymptotic sense as $\varepsilon \to 0$, so that $L \approx 1$. 
However, this makes the whole consideration necessarily asymptotic. 
A more important issue with this approach
is that we were unable to derive good concentration properties for the 
posterior $\Pi'(\mathcal{I}=I|X)$ in this case. 

Thus, for a Bayesian who would like to use normal/mixture-of-normals model, 
there is a following dilemma: if the prior variances $\tau^2_i$ are of order $\sigma^2_i$, 
we obtain a ``correct'' $\Pi'(\mathcal{I}=I|X)$, but over-shrunk (towards prior mean) 
$\Pi'_I(\cdot|X)$'s; on the other hand, if $\tau^2_i \gg \sigma^2_i$, 
$\Pi'_I(\cdot|X)$'s are then ``correct'', but $\Pi'(\mathcal{I}=I|X)$ does not posses 
good concentration properties (at least we were unable to establish this).

The empirical Bayes approach resolves this issue, also within the Bayesian 
paradigm, as we demonstrated in the paper. The idea is  to treat 
the prior means as parameters  chosen by the empirical 
Bayes procedure, which removes the over-shrinkage effect.

\subsection{Connection to penalized estimators}
\label{subsec_connect}
In view of Remark \ref{rem_ddm2}, 
Theorem \ref{th6} also holds for the DDM  $\mathrm{P}_{\hat{I}}(\cdot|X)$ 
defined by (\ref{ddm2}), instead of the DDM  $\mathrm{P}(\cdot|X)$ given by (\ref{ddm1}).
If we take the DDM-expectation with respect to the 
DDM  $\mathrm{P}_{\hat{I}}(\cdot|X)$ (like we did in (\ref{tilde_estimator}) 
for the DDM $\mathrm{P}(\cdot|X)$), we obtain the estimator 
\[
\hat{\theta} = X(\hat{I})=(X_i\mathrm{1}\{i\le \hat{I}\}, i\in\mathbb{N}).
\] 
In the direct case $\kappa_i^2=1$, some basic computations reveal that 
$\hat{I}$ is the minimizer of  
\[
\mathit{crit}(I)=-\|X(I)\|^2 
+(\log(K+1) +2 \alpha) \varepsilon^2 I,
\]
so that $\hat{\theta} = X(\hat{I})$ turns out to be 
the so called \emph{penalized projection estimator} 
with the penalty constant $P(K,\alpha)=\log(K+1) +2 \alpha$,
studied by \cite{Birge&Massart:2001}.

Interestingly, the conditions $K\ge 1.87$ and $a(K)>\alpha>0$,
coming from Theorems \ref{th1} and \ref{th2a_posterior}, lead to the following range for the
penalty constant: $P(K,\alpha)\in [1.05, 1.2]$. 
Although this is probably not the most precise range, 
the fact itself (that $P(K,\alpha)\in [1.05, 1.2]$) 
reconfirms, from a different perspective, 
the conclusion of \cite{Birge&Massart:2001sup} that the penalty constant should certainly 
be bigger than 1, but not too large.

\subsection{Relation to the results of \cite{Szabo&etal:2015sup}}

Here we demonstrate that our local results 
for the DDM $\mathrm{P}(\cdot|X)$ defined by (\ref{ddm1}) imply, 
among others, the non-asymptotic versions of the global minimax results 
obtained in the intriguing paper by 
Szab\' o, van der Vaart and van Zanten \cite{Szabo&etal:2015sup}. 
In our notations, the observations in \cite{Szabo&etal:2015sup} are
$X' = 
(X'_i, \, i \in \mathbb{N})\sim \mathrm{P}_{\theta_0}=\mathrm{P}_{\theta_0}^{(n)}=
\bigotimes_i N(\theta_{0,i} \kappa^{-1}_i,n^{-1})$, which is effectively the same model as (\ref{model})
with $X'_i=\kappa^{-1}_i X_i$ and $n^{-1/2} = \varepsilon$.
A family of priors on $\theta$ is considered in \cite{Szabo&etal:2015sup}: 
$\Pi_\alpha=\bigotimes_i N\big(0,i^{-(2\alpha+1)}\big)$, 
$\alpha \in [0,A]$, leading to the posteriors $\Pi_\alpha(\cdot |X')$ 
and the marginal distributions $X'\sim \Pi_{\alpha, X'}$ with the (marginal) 
likelihood $\ell_n(\alpha) =\ell_n(\alpha,X')$. The proposed DDM  
is $\Pi_{\hat{\alpha}_n} (\cdot |X')$ with $\hat{\alpha}_n = \arg\!\max_{\alpha\in[0,A]} \ell_n(\alpha)$,
which is the empirical Bayes posterior with respect to the parameter $\alpha$.
The DDM $\Pi_{\hat{\alpha}_n} (\cdot |X')$ is then used to construct a DDM-credible ball 
whose coverage and size properties were studied. 

The main results in \cite{Szabo&etal:2015sup} are the asymptotic 
(as $n\to \infty$ or, in our notations, as $\varepsilon \to 0$) 
versions of the minimax framework (\ref{adapt_conf_ball_problem}), 
for $\Theta'_{cov} =\Theta_{pt}$ and the following four scales:  
Sobolev hyperrectangles $\mathcal{H}_S$, Sobolev ellipsoids $\mathcal{E}_S$  
and the two \emph{supersmooth} scales, analytic ellipsoids $\mathcal{E}_A$ 
and parametric hyperrectangles $\mathcal{H}_P$
(the notations in \cite{Szabo&etal:2015sup} are $\Theta^\beta(Q)$, 
$S^\beta(Q)$, $S^{\infty,c,d}(Q)$ and $C^{00}(N_0,Q)$, respectively).
Precisely, let $Q,\beta,c,d>0$, $N_0\in\mathbb{N}$, and $\mathcal{E}(a), \mathcal{H}(a)$ 
be defined by (\ref{ellipsoids}). Then 
\begin{subequations}
\begin{align}
\label{sob_hyper}
&\mathcal{H}_S=\mathcal{H}_S(\beta,Q)
=\mathcal{H}(a) \;\; \text{with} \;\; a_i^2= Q i^{-(2\beta+1)}, \\
\label{sob_ell}
&\mathcal{E}_S=\mathcal{E}_S(\beta,Q)=
\mathcal{E}(a) \;\; \text{with} \;\;  a^2_i=Q i^{-2\beta}, \\ 
\label{an_ell}
&  
\mathcal{E}_A=\mathcal{E}_A(c,d,Q)=\mathcal{E}(a) \;\; \text{with} \;\; a_i^2=Qe^{-ci^d},\\
\label{par_hyper}
&\mathcal{H}_P=\mathcal{H}_P(N_0, Q)
=\mathcal{H}(a) \;\; \text{with} \;\; a_i^2=Q \mathrm{1}\{i\le N_0\}.
\end{align} 
By using (\ref{minimax_rates}), it is easy to compute the corresponding minimax rates 
over these scales, under the asymptotic regime $\varepsilon \to 0$ (or, $n\to \infty$):
\begin{align*}
R^2(\mathcal{E}_S) &\asymp \varepsilon^{4\beta/(2\beta+2p+1)} 
=n^{-2\beta/(2\beta+2p+1)}, \\
R^2(\mathcal{H}_S) &\asymp \varepsilon^{4\beta/(2\beta+2p+1)}=
n^{-2\beta/(2\beta+2p+1)},\\
R^2(\mathcal{E}_A) &\asymp \varepsilon^2 (\log \varepsilon^{-1})^{(2p+1)/d} 
= \frac{(\log n)^{(2p+1)/d}}{n}, \\
R^2(\mathcal{H}_P) &\asymp  \varepsilon^2 =n^{-1}.
\end{align*}
\end{subequations}
Notice that the parametric class $\mathcal{H}_P$ automatically satisfies EBR.

The DDM $\Pi_{\hat{\alpha}_n} (\cdot |Y)$ is well suited to model Sobolev-type scales: 
the optimal (minimax) radial rates are obtained in the size relation of 
(\ref{adapt_conf_ball_problem}) for Sobolev hyperrectangles $\mathcal{H}_S$ and 
ellipsoids $\mathcal{E}_S$, but only suboptimal rates for 
the two supersmooth scales $\mathcal{E}_A$ and $\mathcal{H}_P$:
\begin{align*}
\frac{(\log n)^{(p+1/2)\sqrt{\log n}}}{n}
&\gg R^2(\mathcal{E}_A)
=\frac{(\log n)^{(2p+1)/d}}{n} , \\
\frac{e^{(3p+3/2)\sqrt{\log N_0}\sqrt{\log n}}}{n} 
&\gg R^2(\mathcal{H}_P)=\frac{1}{n}.
\end{align*}

 
For the DDM $\mathrm{P}(\cdot|X)$ defined by (\ref{ddm1}), Theorem \ref{th6} 
implies, in view of  $\Theta_{pt}\subseteq \Theta_{eb}$ and (\ref{loc<global}), 
the non-asymptotic minimax results (\ref{adapt_conf_ball_problem}) for all 
ellipsoids $\mathcal{E}(a)$ and hyperrectangles $\mathcal{H}(a)$ defined by (\ref{ellipsoids}), 
for \emph{all unknown} (non-increasing) $a$. 
Now note that, according to (\ref{sob_hyper})--(\ref{par_hyper}), the four above mentioned 
scales from \cite{Szabo&etal:2015sup} are particular examples of  ellipsoids $\mathcal{E}(a)$ 
and hyperrectangles $\mathcal{H}(a)$, with specific choices of sequence $a$. 
Hence, the minimax results (\ref{adapt_conf_ball_problem}) for all the four scales
(including the two supersmooth scales  $\mathcal{E}_A$ and $\mathcal{H}_P$)
follow for the DDM (\ref{ddm1}).  
Asymptotic version can readily be derived from the non-asymptotic ones.
Recall that the scope of the DDM $\mathrm{P}(\cdot|X)$ extends 
further than the above mentioned four scales, even beyond general families 
of ellipsoids and hyperrectangles. The local results of Theorem \ref{th6} 
deliver the minimax results  of type (\ref{adapt_conf_ball_problem}) for all 
scales for which (\ref{local_impl_global}) holds; for example, also for the scales of tail classes 
and $\ell_p$-bodies.

\end{document}